\newtheorem{sat}{Theorem}[section]		
\newtheorem{lem}[sat]{Lemma}
\newtheorem{kor}[sat]{Corollary}			
\newtheorem{prop}[sat]{Proposition}
\newtheorem*{defi*}{Definition}			
\newtheorem*{bei*}{Example}
\newtheorem*{sat*}{Theorem}				
\newtheorem*{kor*}{Corollary}
\newtheorem*{rmk*}{Remark}				
\newtheorem*{quest*}{Question}
\let\ssection=\section
\renewcommand{\section}{\setcounter{equation}{0}\ssection}
\newtheorem*{namedtheorem}{\theoremname}
\newcommand{\theoremname}{testing}
\newenvironment{named}[1]{\renewcommand{\theoremname}{#1}\begin{namedtheorem}}{\end{namedtheorem}}
\theoremstyle{remark}
\newtheorem*{bem}{Remark}
\newtheorem{bei}{Example}
\newtheorem*{namedtheoremr}{\theoremnamer}
\newcommand{\theoremnamer}{testing}
			\newcommand{\BD}{\mathbb D}
\newcommand{\BI}{\mathbb I}			
			\newcommand{\BL}{\mathbb L}
			\newcommand{\BR}{\mathbb R}
\newcommand{\BS}{\mathbb S}			\newcommand{\BT}{\mathbb T}
			\newcommand{\BZ}{\mathbb Z}
\newcommand{\CC}{\mathcal C}			\newcommand{\calD}{\mathcal D}
\newcommand{\CE}{\mathcal E}
			\newcommand{\CL}{\mathcal L}
\newcommand{\CM}{\mathcal M}		
			\newcommand{\CP}{\mathcal P}
\newcommand{\CU}{\mathcal U}			
		\newcommand{\CX}{\mathcal X}
\newcommand{\FM}{\mathfrak m}
\newcommand{\FP}{\mathfrak p}
\newcommand{\FN}{\mathfrak n}
\newcommand{\FB}{\mathfrak b}
\newcommand{\FC}{\mathfrak c}
\newcommand{\FQ}{\mathfrak q}
\newcommand{\FR}{\mathfrak r}
\newcommand{\actson}{\curvearrowright}
\newcommand{\D}{\partial}
\newcommand{\bs}{\backslash}
\DeclareMathOperator{\Id}{Id}		
\DeclareMathOperator{\Map}{Map}
\newcommand{\comment}[1]{}
\DeclareMathOperator{\Stab}{Stab}
\DeclareMathOperator{\Lip}{Lip}
\DeclareMathOperator{\Thu}{Thu}
\DeclareMathOperator{\fil}{fill}
\DeclareMathOperator{\Leb}{Leb}
\DeclareMathOperator{\Ext}{Ext}
\DeclareMathOperator{\polar}{polar}
\DeclareMathOperator{\diag}{diag}
\DeclareMathOperator{\ratio}{ratio}
\DeclareMathOperator{\cone}{cone}
\newcommand{\fsubd}{\mathrel{{\scriptstyle\searrow}\kern-1ex^d\kern0.5ex}}
\newcommand{\bsubd}{\mathrel{{\scriptstyle\swarrow}\kern-1.6ex^d\kern0.8ex}}
\renewcommand{\epsilon}{\varepsilon}
\renewcommand{\le}{\leqslant}
\renewcommand{\ge}{\geqslant}
\renewcommand{\emptyset}{\varnothing}
\begin{document}

\title[]{On the distribution of the components of multicurves of given type}
\author{Viveka Erlandsson}
\address{School of Mathematics, University of Bristol, and}
\address{Department of Mathematics and Statistics, UiT The Arctic University of
Norway}

\email{v.erlandsson@bristol.ac.uk}

\author{Juan Souto}
\address{IRMAR, Universit\'e de Rennes}
\email{juan.souto@univ-rennes.fr}

\begin{abstract}
We study the distribution of the individual components of a random multicurve under the action of the mapping class group.
\end{abstract}

\maketitle

\section{Introduction}
Throughout this paper let $\Sigma$ be a compact, connected, orientable surface of genus $g$ and $r$ boundary components satisfying $2g+r\ge 3$. We denote by $\Vert\cdot\Vert$ the $\ell^1$-norm on Euclidean space and by $\Delta_k=\{\vec x\in\BR^k_{\ge 0},\ \Vert\vec x\Vert=1\}$ the standard simplex---note that the dimension of $\Delta_k$ is $k-1$. Finally,  independently of the concrete ambient space, we denote by $\delta_p$ the Dirac probability measure centered on $p$.
\medskip

\subsection*{Distribution of vectors of lengths and ratios}
Under a $k$-multicurve on $\Sigma$ we consider an ordered tuple $\vec\gamma=(\gamma_1,\dots,\gamma_k)$ of (primitive, essential) curves, weighted if you so wish, or even multicurves, that is, a formal linear combination of curves with positive coefficients. If $\phi$ is any reasonable notion of length, then we will use polar coordinates to represent the vector encoding the lengths $\phi(\gamma_1),\dots,\phi(\gamma_k)$ of the individual components:
$$\BL_\phi(\vec\gamma)=\left(\frac 1{\sum_i \phi(\gamma_i)}(\phi(\gamma_1),\dots,\phi(\gamma_k)),\sum_i \phi(\gamma_i)\right)\in\Delta_k\times\BR_{\ge 0}.$$
The mapping class group $\Map(\Sigma)$ acts on the set of $k$-multicurves and the goal of this paper is to look at the distribution of the vectors $\BL_\phi(\vec\gamma)$ where $\vec\gamma$ ranges over the mapping class group orbit $\Map(\Sigma)\cdot\vec\gamma^o$ of some fixed $k$-multicurve. More specifically we will study the asymptotic behavior of the measures
$$\underline m(\phi,\vec\gamma^o,L)=\frac 1{\vert{\bf M}_{\phi,\vec\gamma^o}(L)\vert}\sum_{\vec\gamma\in\Map(\Sigma)\cdot\vec\gamma^o}\delta_{\BL_\phi(\frac 1L \vec\gamma)}$$
on $\Delta_k\times\BR_{\ge 0}$, where 
$${\bf M}_{\phi,\vec\gamma^o}(L)=\left\{(\gamma_i)_i\in\Map(\Sigma)\cdot\vec\gamma^o\ \middle\vert\  \sum_i\phi(\gamma_i)\le L\right\}.$$
This problem was first studied by Mirzakhani in \cite{Maryam general} in the case that $\phi=\ell$ is the length function of a hyperbolic structure on $\Sigma$ and that $\vec\gamma^o$ is a pants decomposition. Later, building on Mirzakhani's work, Arana-Herrera \cite{Arana} and Liu \cite{Liu} studied the case that $\vec\gamma^o$ is a general simple $k$-multicurve, that is a $k$-multicurve whose components are simple and disjoint of each other. For example, they proved that for simple $\vec\gamma^o$ the measures $\underline m(\ell,\vec\gamma^o,L)$ have a limit when $L\to\infty$, and that in fact the limit does not depend on the chosen hyperbolic metric. Our first result is that this is true for all $k$-multicurves, simple or not, and all $\phi$ in a very large class of length functions:

\begin{sat}\label{main counting}
For every $k$-multicurve $\vec\gamma^o=(\gamma_1,\dots,\gamma_k)$ in $\Sigma$ there is a probability measure $\FP_{\vec\gamma^o}$ on $\Delta_k$ such that for any continuous, positive, homogenous function $\phi:\CC(\Sigma)\to\BR_{\ge 0}$ on the space of currents on $\Sigma$ we have
$$\lim_{L\to\infty}\underline m(\phi,\vec\gamma^o,L)=\FP_{\vec\gamma^o}\otimes\left((6g-6+2r)\cdot t^{6g-7+2r}\cdot d{\bf t}\right)$$
where $d{\bf t}$ stands for the standard Lebesgue measure on $\BR_{\ge 0}$. In particular, the limiting distribution is independent of $\phi$.
\end{sat}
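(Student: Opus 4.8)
The plan is to reduce the statement to a counting result for currents and then invoke Mirzakhani-type asymptotics. The starting point is the observation that a continuous, positive, homogeneous function $\phi:\CC(\Sigma)\to\BR_{\ge 0}$ induces, via $\vec\gamma\mapsto\BL_\phi(\vec\gamma)$, exactly the same combinatorial data up to the scaling factor $\sum_i\phi(\gamma_i)$; so the key is to understand the joint distribution of the currents $\gamma_1,\dots,\gamma_k$ as $\vec\gamma$ ranges over $\Map(\Sigma)\cdot\vec\gamma^o$. The first step is therefore to fix an auxiliary geometric length function---say $\ell=\ell_X$ for some hyperbolic structure $X$---and to recall (from the work of Mirzakhani, and its extension by Erlandsson--Souto to arbitrary, not necessarily simple, multicurves and general $\phi$) that the measures
$$\frac 1{L^{6g-6+2r}}\sum_{\vec\gamma\in\Map(\Sigma)\cdot\vec\gamma^o}\delta_{\frac 1L\vec\gamma}$$
on $\CC(\Sigma)^k$ converge, as $L\to\infty$, to a measure $\widehat\mu_{\vec\gamma^o}$ which is (a constant multiple of) the pushforward of Thurston measure under the orbit map. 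The homogeneity of $\phi$ of degree $1$ and the $6g-6+2r$ homogeneity of Thurston measure are what will produce the exponent $6g-7+2r$ and the radial factor $(6g-6+2r)t^{6g-7+2r}\,d{\bf t}$.

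Granting that, the second step is bookkeeping: write $\Phi:\CC(\Sigma)^k\to\Delta_k\times\BR_{\ge 0}$, $\Phi(c_1,\dots,c_k)=\BL_\phi(c_1,\dots,c_k)$, which is well defined and continuous off the locus where all $\phi(c_i)=0$ (a null set for $\widehat\mu_{\vec\gamma^o}$). Then $\underline m(\phi,\vec\gamma^o,L)$ is, up to the normalizing constant $|{\bf M}_{\phi,\vec\gamma^o}(L)|$, the pushforward under $\Phi$ of the restriction of $\frac 1{L^{6g-6+2r}}\sum\delta_{\frac 1L\vec\gamma}$ to the region $\{\sum_i\phi(c_i)\le 1\}$. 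Using homogeneity, $\Phi_*\widehat\mu_{\vec\gamma^o}$ has a product structure: scaling all coordinates by $t$ leaves the $\Delta_k$-component fixed and multiplies the $\BR_{\ge 0}$-component by $t$, while multiplying the measure by $t^{6g-6+2r}$; disintegrating along the radial coordinate forces the second factor to be proportional to $t^{6g-7+2r}\,d{\bf t}$ and the first factor to be some probability measure $\FP_{\vec\gamma^o}$ on $\Delta_k$ that does \emph{not} depend on $\phi$ (since changing $\phi$ only reparametrizes the radial direction fiberwise, which is absorbed into the $t^{6g-7+2r}$ factor after renormalization). The independence of the hyperbolic metric $X$ used in the auxiliary step is automatic because $\widehat\mu_{\vec\gamma^o}$ is, up to scale, metric-independent (it is built from Thurston measure).

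The final step is to identify the normalizing constants so that the limit is a probability measure: one shows $|{\bf M}_{\phi,\vec\gamma^o}(L)|\sim c\cdot L^{6g-6+2r}$ for the same constant $c$ appearing in $\widehat\mu_{\vec\gamma^o}$, which is again a consequence of the current-counting asymptotics applied to the function $\sum_i\phi(c_i)$ (continuous, positive, homogeneous of degree $1$ on $\CC(\Sigma)$, hence covered); dividing, the constants cancel and one is left with $\FP_{\vec\gamma^o}\otimes\bigl((6g-6+2r)t^{6g-7+2r}\,d{\bf t}\bigr)$. I expect the main obstacle to be the first step for \emph{non-simple} $\vec\gamma^o$: the clean statement that $\frac 1{L^{6g-6+2r}}\sum\delta_{\frac 1L\vec\gamma}$ converges to a scalar multiple of a pushforward of Thurston measure, with a well-behaved error term, is exactly what is delicate when the components of $\vec\gamma^o$ intersect each other; this is where the genuinely new work lies, and everything after it is soft measure-theoretic manipulation.
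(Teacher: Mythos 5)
Your second and third steps (the disintegration via the scaling of the limit measure, and the normalization $\vert{\bf M}_{\phi,\vec\gamma^o}(L)\vert\sim c\,L^{6g-6+2r}$) are sound and essentially reproduce what the paper does in its Section on length vectors. The genuine gap is in the first step, and it is not merely "delicate": the convergence of the measures $\frac 1{L^{6g-6+2r}}\sum_{\vec\gamma\in\Map(\Sigma)\cdot\vec\gamma^o}\delta_{\frac 1L\vec\gamma}$ on $\CC(\Sigma)^k$ is not available in Mirzakhani's work nor in the Erlandsson--Souto book, which treat measures on $\CC(\Sigma)$ (i.e.\ the image under $\pi(\vec\mu)=\sum_i\mu_i$), not tuples. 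That convergence is exactly the main theorem of this paper, proved by establishing precompactness, showing every sublimit is supported on the thickened diagonal $\BD(\Delta_k\times\CM\CL(\Sigma))$ (via a self-intersection estimate plus Masur's unique ergodicity theorem), disintegrating sublimits as $\BD_*(\FQ\otimes\FM_{\Thu})$ using ergodicity of $\FM_{\Thu}$, and then pinning down $\FQ=\FQ_{\vec\gamma^o}$ by a separate counting result for the functions $\lambda\mapsto\iota(\sigma^o,\lambda)$ with $\sigma^o$ a filling multicurve chosen so that a genuine fundamental domain for $\Stab(\vec\gamma^o)\actson\CM\CL(\Sigma)$ exists. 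Your description of the limit as ``(a constant multiple of) the pushforward of Thurston measure under the orbit map'' is not correct: the limit is $\BD_*(\FQ_{\vec\gamma^o}\otimes\FM_{\Thu})$, and the factor $\FQ_{\vec\gamma^o}$ on $\Delta_k$ (which records how mass is shared among the $k$ components) is a nontrivial, $\vec\gamma^o$-dependent measure, not Thurston measure.

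This matters for your argument for the independence of $\FP_{\vec\gamma^o}$ from $\phi$. You claim that changing $\phi$ ``only reparametrizes the radial direction fiberwise,'' but for a general point of $\CC(\Sigma)^k$ the simplex coordinate $\frac 1{\sum_j\phi(c_j)}(\phi(c_i))_i$ of $\BL_\phi$ changes with $\phi$ as well. The claim is true only on the thickened diagonal: if $\vec\mu=(a_1\lambda,\dots,a_k\lambda)$ then homogeneity gives $\phi(a_i\lambda)=a_i\phi(\lambda)$, so the simplex coordinate is $(a_i)_i$ for \emph{every} $\phi$, and only the radial coordinate $\phi(\lambda)$ depends on $\phi$. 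So the $\phi$-independence of $\FP_{\vec\gamma^o}$ is a consequence of the support statement for the limit measure, which you neither state precisely nor prove; without it the step fails. In short, your proposal correctly handles the soft measure-theoretic reduction but assumes, with an incorrect attribution and an imprecise description, the structural convergence theorem on $\CC(\Sigma)^k$ that constitutes the actual content of the paper.
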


Since $\FP_{\vec\gamma^o}(U)=\big(\FP_{\vec\gamma^o}\otimes(6g-6+2r)\cdot t^{6g-7+2r}d{\bf t}\big)(U\times [0,1])$ we get the following from Theorem \ref{main counting}:

\begin{kor}\label{kor counting}
With notation as in Theorem \ref{main counting} we have 
$$\lim_{L\to\infty}\frac{\vert\{\vec\gamma\in{\bf M}_{\phi,\gamma^o}(L)\text{ with } \frac 1{\sum_i\phi(\gamma_i)}(\phi(\gamma_i))_i\in U\}\vert}{\vert{\bf M}_{\phi,\vec\gamma^o}(L)\vert}=\FP_{\vec\gamma^o}(U)$$
for every $U\subset\Delta_k$ open with $\FP_{\vec\gamma^o}(\D\overline U)=0$.\qed
\end{kor}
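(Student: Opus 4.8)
The plan is to derive this straight from Theorem \ref{main counting} with no new geometric input: the point is simply to recognize the fraction on the left-hand side as the mass assigned by the measure $\underline m(\phi,\vec\gamma^o,L)$ to one fixed relatively compact subset of $\Delta_k\times\BR_{\ge 0}$, and then to run the portmanteau theorem.

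First I would unwind the definitions. Because $\phi$ is homogeneous, $\phi(\tfrac1L\gamma_i)=\tfrac1L\phi(\gamma_i)$, so the first coordinate of $\BL_\phi(\tfrac1L\vec\gamma)$ is exactly the ratio vector $\tfrac1{\sum_i\phi(\gamma_i)}(\phi(\gamma_i))_i$ and its second coordinate is $\tfrac1L\sum_i\phi(\gamma_i)$. Hence $\BL_\phi(\tfrac1L\vec\gamma)\in U\times[0,1]$ precisely when $\vec\gamma\in{\bf M}_{\phi,\vec\gamma^o}(L)$ and the ratio vector of $\vec\gamma$ lies in $U$; consequently the numerator in the Corollary counts exactly those $\vec\gamma\in\Map(\Sigma)\cdot\vec\gamma^o$ with $\BL_\phi(\tfrac1L\vec\gamma)\in U\times[0,1]$, and the whole fraction equals $\underline m(\phi,\vec\gamma^o,L)\big(U\times[0,1]\big)$. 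Moreover, since $\int_0^1(6g-6+2r)t^{6g-7+2r}\,d{\bf t}=1$, the limiting measure of Theorem \ref{main counting} satisfies $\big(\FP_{\vec\gamma^o}\otimes(6g-6+2r)t^{6g-7+2r}d{\bf t}\big)\big(U\times[0,1]\big)=\FP_{\vec\gamma^o}(U)$. So the Corollary is equivalent to the convergence $\underline m(\phi,\vec\gamma^o,L)(U\times[0,1])\to\FP_{\vec\gamma^o}(U)$ as $L\to\infty$.

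Next I would invoke Theorem \ref{main counting}: it gives convergence of $\underline m(\phi,\vec\gamma^o,L)$ to $\mu:=\FP_{\vec\gamma^o}\otimes(6g-6+2r)t^{6g-7+2r}d{\bf t}$ against compactly supported continuous functions on $\Delta_k\times\BR_{\ge 0}$ (all measures here are locally finite sums of Dirac masses, hence Radon, though of infinite total mass). Now I would sandwich the indicator of $A:=U\times[0,1]$. On the one hand, for any $f\in C_c(\Delta_k\times\BR_{\ge 0})$ with $0\le f\le\BONE_{U\times[0,1)}$ one has $\int f\,d\underline m(\phi,\vec\gamma^o,L)\le\underline m(\phi,\vec\gamma^o,L)(A)$, so $\liminf_L\underline m(\phi,\vec\gamma^o,L)(A)\ge\int f\,d\mu$; taking the supremum over such $f$ and using inner regularity of $\mu$ on the open set $U\times[0,1)$ (together with the fact that $\mu$ puts no mass on the slice $\overline U\times\{1\}$, the $t$-factor being non-atomic) yields $\liminf_L\ge\mu(U\times[0,1))=\FP_{\vec\gamma^o}(U)$. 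On the other hand, any $f\in C_c$ with $f\ge\BONE_{\overline U\times[0,1]}$ gives $\underline m(\phi,\vec\gamma^o,L)(\overline U\times[0,1])\le\int f\,d\underline m(\phi,\vec\gamma^o,L)$, so since $A\subset\overline U\times[0,1]$, outer regularity of $\mu$ on the compact set $\overline U\times[0,1]$ yields $\limsup_L\underline m(\phi,\vec\gamma^o,L)(A)\le\mu(\overline U\times[0,1])=\FP_{\vec\gamma^o}(\overline U)$. Combining, $\FP_{\vec\gamma^o}(U)\le\liminf_L\underline m(\phi,\vec\gamma^o,L)(A)\le\limsup_L\underline m(\phi,\vec\gamma^o,L)(A)\le\FP_{\vec\gamma^o}(\overline U)$, and the continuity assumption on $U$, which ensures $\FP_{\vec\gamma^o}(\overline U)=\FP_{\vec\gamma^o}(U)$, squeezes the limit to $\FP_{\vec\gamma^o}(U)$.

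The only point requiring genuine care, and the closest thing to an obstacle here, is checking that no mass escapes in the limit across the ``seam'' $\partial(U\times[0,1])=(\partial U\times[0,1])\cup(\overline U\times\{1\})$: the slice $\{t=1\}$ is harmless because the $t$-marginal of $\mu$ is absolutely continuous, and the $\partial U$ piece is precisely what the hypothesis on $U$ is designed to kill. Everything else is routine bookkeeping with weak convergence of Radon measures, since Theorem \ref{main counting} already does all the work.
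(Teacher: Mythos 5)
Your proposal is correct and is essentially the paper's own argument: the paper dispatches the corollary as immediate from Theorem \ref{main counting} via exactly the identity $\FP_{\vec\gamma^o}(U)=\big(\FP_{\vec\gamma^o}\otimes(6g-6+2r)t^{6g-7+2r}d{\bf t}\big)(U\times[0,1])$, the observation that the fraction is $\underline m(\phi,\vec\gamma^o,L)(U\times[0,1])$, and the Portmanteau theorem, whose boundary-mass hypothesis is supplied by the absolute continuity of the $t$-factor together with the assumption on $U$. You have simply written out the inner/outer regularity details of that Portmanteau step, which the paper leaves implicit.
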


In some cases the measure $\FP_{\vec\gamma^o}$ can be explicitly calculated and this is actually what Mirzakhani, Arana-Herrera, and Liu do for simple $k$-multicurves. For example, if $\vec P^o$ is a pants decomposition we have
\begin{equation}\label{eq p pants}
\FP_{\vec P^o}(U) = \frac{(6g-7+2r)!}{\sqrt{3g-3+r}}\cdot\int_U \prod x_i\ d\Leb_{\Delta_{3g-3+r}}
\end{equation}
where $\Leb_{\Delta_{3g-3+r}}$ is the Lebesgue measure on the simplex (see Example \ref{example pants} in Section \ref{sec examples} below). From here we get statements like the following: for any continuous, positive and homogenous function $\phi:\CC(\Sigma)\to\BR_{\ge 0}$ and any $\epsilon$ we have that
$$\lim_{L\to\infty}\frac{\vert\{\vec P\in{\bf M}_{\phi,\vec P^o}(L)\vert\text{ there is }i\text{ with }\phi(\gamma_i)\leq\epsilon\cdot\frac{L}{3g-3+r}\}\vert}{\vert{\bf M}_{\phi,\vec P^o}(L)\vert}\leq\epsilon$$
for any pants decomposition $\vec P^o$. It might be worth noticing that for $\epsilon$ very small, this bound is pretty course since we can also bound the limit from above by $(6g-6+2r)\cdot\epsilon^2$ which in fact is asymptotically correct as $\epsilon\to 0$.  
\medskip

Calculating the measure $\FP_{\vec\gamma^o}$ for a general $\vec\gamma^o$, simple or not, is in principle feasible (see Proposition \ref{prop describe p measure}), but it becomes very fast very complicated. Deep down, the reason why $\FP_{\vec\gamma^o}$ is in principle computable is because it is the push-forward under a polyhedral map $\BS^{6g-7+2r}\to\Delta_k$ of a measure in the Lebesgue class with PL-density. On the other hand, there is no bound independent of $\vec\gamma^o$ for the complexity of the polyhedral map we just mentioned (see Example \ref{example p complicated} in Section \ref{sec examples}).
\medskip

Recall now that there is a plethora of continuous, homogenous and positive functions on the space of currents \cite{Viv word length,Hugo,Didac-Dylan} such as for example the length function associated to any negatively curved Riemannian metric, the extremal length associated to any conformal structure, the word length on $\pi_1(\Sigma)$ associated to certain specific sets of generators, the stable length associated to any discrete and cocompact action of $\pi_1(\Sigma)$ on a geodesic metric space, etc, etc... Still, the conclusion of Theorem \ref{main counting} is that the asymptotic distribution of the vectors $\BL_{\phi}(\vec\gamma)$ for $\vec\gamma$ in a mapping class group orbit does not change if we replace $\phi$ by another function $\psi$. On the other hand it is very sensitive to the choice of $\vec\gamma^o$. Now, the roles of these two factors flip when we consider the distribution
\begin{equation}\label{eq distribution L ratios}
r(\psi/\phi,\vec\gamma^o,L)=\frac 1{\vert{\bf M}_{\phi,\vec\gamma^o}(L)\vert}\sum_{\vec\gamma\in{\bf M}_{\phi,\vec\gamma^o}(L)}\delta_{\ratio_{\psi/\phi}(\vec\gamma)}
\end{equation}
of the vectors 
$$\ratio_{\psi/\phi}((\gamma_i)_i)=\left(\frac{\psi(\gamma_i)}{\phi(\gamma_i)}\right)_i\in\BR_{\ge 0}^k$$
whose entries are the ratios of the $\psi$- and $\phi$-lengths of the individual components of the $k$-multicurve in question.

\begin{sat}\label{main ratios}
For any two continuous, homogenous, positive functions $\phi,\psi:\CC(\Sigma)\to\BR_{\ge 0}$ there is a probability measure $\FR_{\psi/\phi}$ on $\BR_{\ge 0}$ with
$$\diag_*(\FR_{\psi/\phi})=\lim_{L\to\infty}\frac 1{\vert{\bf M}_{\phi,\vec\gamma^o}(L)\vert} r(\psi/\phi,\vec\gamma^o,L)$$
for any $k$-multicurve $\vec\gamma^o$. Here $\diag:\BR_{\ge 0}\to\BR_{\ge 0}^k$ is the diagonal map $\diag(t)=(t,\dots,t)$.
\end{sat}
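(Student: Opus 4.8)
The plan is to exploit that $\ratio_{\psi/\phi}$ only sees the projective classes of the individual components. By homogeneity of $\phi$ and $\psi$ one has $\ratio_{\psi/\phi}(\vec\gamma)=(R([\gamma_1]),\dots,R([\gamma_k]))$ with $R([\mu])=\psi(\mu)/\phi(\mu)$ a continuous (and, since $\phi$ is positive and $\{\phi=1\}$ is compact, bounded) function on $\BP\CC(\Sigma)$. So the statement is about the limiting joint law of the projective classes of the components of $\vec\gamma\in{\bf M}_{\phi,\vec\gamma^o}(L)$. I would feed in the equidistribution theorem behind Theorem \ref{main counting}, namely that the measures $\frac1{L^{6g-6+2r}}\sum_{\vec\gamma\in\Map(\Sigma)\cdot\vec\gamma^o}\delta_{\frac1L\vec\gamma}$ converge weakly on $\CC(\Sigma)^k$ to a Radon measure $\nu_{\vec\gamma^o}$ which is a push-forward of a multiple of the Thurston measure $m_{\mathrm{Thu}}$; in particular $\nu_{\vec\gamma^o}$ lies in the Lebesgue class on its support and its $k$ one-dimensional marginals are absolutely continuous with respect to $m_{\mathrm{Thu}}$.

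The first key step is that $\nu_{\vec\gamma^o}$-almost every tuple in its support consists of pairwise proportional measured laminations. Indeed $i(\tfrac1L\gamma_i,\tfrac1L\gamma_j)=\tfrac1{L^2}\,i(\gamma_i^o,\gamma_j^o)\to0$ while intersection number is continuous on $\CC(\Sigma)^k$, so $\nu_{\vec\gamma^o}$ is supported on tuples $(\mu_1,\dots,\mu_k)$ with $i(\mu_i,\mu_j)=0$ for all $i,j$; by absolute continuity of the marginals, $\nu_{\vec\gamma^o}$-a.e.\ such tuple has every $\mu_i$ minimal, filling and uniquely ergodic (as is $m_{\mathrm{Thu}}$-a.e.\ lamination), and a filling $\mu_i$ has $i(\mu_i,\cdot)=0$ only on $\BR_{\ge0}\mu_i$. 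Setting $\phi_\Sigma(\vec\mu):=\sum_i\phi(\mu_i)$, $\psi_\Sigma$ likewise, and $T:=\psi_\Sigma/\phi_\Sigma$, it follows that $\ratio_{\psi/\phi}=\diag\circ T$ $\nu_{\vec\gamma^o}$-almost everywhere on the support: the ratio vector is asymptotically constant along the diagonal, with common value the ratio of $\psi$- and $\phi$-length of the common limiting lamination.

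Convergence of the measures is then routine. Put $B=\{\vec\mu\in\CC(\Sigma)^k:\phi_\Sigma(\vec\mu)\le1\}$; this is compact and $\nu_{\vec\gamma^o}(\D B)=0$, and homogeneity identifies ${\bf M}_{\phi,\vec\gamma^o}(L)$ with the atoms of $\frac1{L^{6g-6+2r}}\sum_{\vec\gamma}\delta_{\frac1L\vec\gamma}$ lying in $B$, so
$$\frac1{|{\bf M}_{\phi,\vec\gamma^o}(L)|}\sum_{\vec\gamma\in{\bf M}_{\phi,\vec\gamma^o}(L)}\delta_{\ratio_{\psi/\phi}(\vec\gamma)}=\frac{L^{6g-6+2r}}{|{\bf M}_{\phi,\vec\gamma^o}(L)|}\,(\ratio_{\psi/\phi})_*\Big(\tfrac1{L^{6g-6+2r}}\!\!\!\sum_{\vec\gamma\in\Map(\Sigma)\cdot\vec\gamma^o}\!\!\!\delta_{\frac1L\vec\gamma}\Big|_B\Big).$$
As $L\to\infty$ one has $|{\bf M}_{\phi,\vec\gamma^o}(L)|\sim\nu_{\vec\gamma^o}(B)\,L^{6g-6+2r}$ (the counting part of Theorem \ref{main counting}), while $\frac1{L^{6g-6+2r}}\sum_{\vec\gamma}\delta_{\frac1L\vec\gamma}|_B\to\nu_{\vec\gamma^o}|_B$ weakly; since $\ratio_{\psi/\phi}$ is bounded and continuous off the $\nu_{\vec\gamma^o}$-null set where some component vanishes, the portmanteau theorem gives that the right-hand side tends to $\diag_*\big(T_*\!\big(\nu_{\vec\gamma^o}(B)^{-1}\nu_{\vec\gamma^o}|_B\big)\big)$. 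So $\FR^{\vec\gamma^o}_{\psi/\phi}:=T_*\!\big(\nu_{\vec\gamma^o}(B)^{-1}\nu_{\vec\gamma^o}|_B\big)$ is a probability measure on $\BR_{\ge0}$ realising the claimed limit.

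The hard part is showing $\FR^{\vec\gamma^o}_{\psi/\phi}$ does not depend on $\vec\gamma^o$. On $\supp\nu_{\vec\gamma^o}$ one has $T=R\circ e$ where $e$ records the common projective class in $\BP\mathcal{ML}(\Sigma)$, so $\FR^{\vec\gamma^o}_{\psi/\phi}=R_*\!\big(e_*(\nu_{\vec\gamma^o}(B)^{-1}\nu_{\vec\gamma^o}|_B)\big)$ and it suffices that the limiting law $e_*(\nu_{\vec\gamma^o}(B)^{-1}\nu_{\vec\gamma^o}|_B)$ of the common projective class of the components be $\vec\gamma^o$-independent. For this I would unpack the explicit description of $\nu_{\vec\gamma^o}$ from the proof of Theorem \ref{main counting}: the common projective class is the ``attracting'' direction of the relevant mapping class, the constraint $\phi_\Sigma\le1$ reads on $\supp\nu_{\vec\gamma^o}$ as a condition of the form $\phi(\text{attracting})\cdot i(|\vec\gamma^o|,\text{repelling})\le \text{const}$, and all of the $\vec\gamma^o$-dependence is carried by the factor $i(|\vec\gamma^o|,\cdot)$ at the repelling direction --- a coordinate that $m_{\mathrm{Thu}}$ decouples from the attracting one. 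Averaging out the repelling coordinate then multiplies $\nu_{\vec\gamma^o}|_B$ and $\nu_{\vec\gamma^o}(B)$ by one and the same $\vec\gamma^o$-dependent constant, which cancels, leaving
$$e_*\!\big(\nu_{\vec\gamma^o}(B)^{-1}\nu_{\vec\gamma^o}|_B\big)=\frac{\pi_*\big(m_{\mathrm{Thu}}|_{\{\phi\le1\}}\big)}{m_{\mathrm{Thu}}(\{\phi\le1\})},\qquad\pi\colon\mathcal{ML}(\Sigma)\setminus\{0\}\to\BP\mathcal{ML}(\Sigma),$$
so $\FR_{\psi/\phi}:=R_*\big(\pi_*(m_{\mathrm{Thu}}|_{\{\phi\le1\}})/m_{\mathrm{Thu}}(\{\phi\le1\})\big)$ works for all $\vec\gamma^o$. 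For $k=1$ this is immediate because $\nu_{\gamma^o}$ is then literally proportional to $m_{\mathrm{Thu}}$ on $\CC(\Sigma)$; the content for general $k$ is precisely that enlarging one curve to a $k$-multicurve modifies only the repelling direction, which is exactly what gets averaged away. Extracting that fact cleanly from the counting machinery is the main obstacle; the remaining steps are soft.
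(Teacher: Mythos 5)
The first two thirds of your argument are essentially the paper's: you push the measures $m(\vec\gamma^o,L)$ forward under the (bounded, projectively defined) ratio map, restrict to $\{\sum_i\phi(\mu_i)\le 1\}$, use the counting asymptotics to renormalize, and use that the limit is carried by tuples of pairwise proportional, uniquely ergodic laminations so that the ratio vector is diagonal there. This is correct in outline and matches Sections \ref{sec subconvergence}--\ref{sec ratios}. The problem is the step you yourself flag as ``the main obstacle'': the independence of the limit from $\vec\gamma^o$. You only feed in that $\nu_{\vec\gamma^o}$ is ``a push-forward of a multiple of the Thurston measure'', and then appeal to a picture of attracting/repelling directions with the $\vec\gamma^o$-dependence sitting in a repelling coordinate that $\FM_{\Thu}$ ``decouples''. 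That picture does not correspond to the actual structure of the limit (there is no pair of transverse directions in play), and no argument is given for the claimed decoupling, so the heart of the theorem is not proved.

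What is actually needed, and what the paper proves, is the precise disintegration of the limit: Theorem \ref{main} (via Proposition \ref{prop main}) shows $\lim_L m(\vec\gamma^o,L)=\BD_*(\FQ_{\vec\gamma^o}\otimes\FM_{\Thu})$, where $\BD((a_i),\lambda)=(a_1\lambda,\dots,a_k\lambda)$ and all the $\vec\gamma^o$-dependence is isolated in the factor $\FQ_{\vec\gamma^o}$ on $\Delta_k$. Obtaining this product form is not soft: it uses mapping class group invariance of every sublimit together with Masur's ergodicity of $\FM_{\Thu}$ to force the conditional measures over the simplex coordinate to be constant multiples of $\FM_{\Thu}$, and then Proposition \ref{prop special case} to identify $\FQ=\FQ_{\vec\gamma^o}$ (which is also what yields Lemma \ref{lemma new}, i.e.\ $\FQ_{\vec\gamma^o}(\partial\Delta_k)=0$ --- a fact you also need for your ``null set where some component vanishes'' step, and which does not follow from your stated hypotheses). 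Once the product form is available, the independence is a one-line cancellation: since $\ratio_{\psi/\phi}\circ\BD((a_i),\lambda)=\diag(\psi(\lambda)/\phi(\lambda))$ and $\phi(\pi(\BD((a_i),\lambda)))=\phi(\lambda)$ depend only on $\lambda$, the $\Delta_k$-factor integrates out to the total mass $\Vert\FQ_{\vec\gamma^o}\Vert$, and the same factor appears in $\vert{\bf M}_{\phi,\vec\gamma^o}(L)\vert\sim\Vert\FQ_{\vec\gamma^o}\Vert\, B(\phi)\,L^{6g-6+2r}$, so it cancels, leaving exactly your guessed formula $\FR_{\psi/\phi}=\bigl(\tfrac{\psi(\cdot)}{\phi(\cdot)}\bigr)_*\bigl(\tfrac 1{B(\phi)}\FM_{\Thu}\vert_{\{\phi\le 1\}}\bigr)$. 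So your final formula is right, but the proposal as written is missing the ergodicity/disintegration input that makes the $\vec\gamma^o$-independence a theorem rather than a heuristic.
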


\begin{bem}
In \eqref{eq distribution L ratios} we take a sum over the subset ${\bf M}_{\phi,\vec\gamma^o}(L)\subset\Map(\Sigma)\cdot\vec\gamma^o$ because we want to be working with probability measures, or at least with Radon measures. If we had summed over the whole orbit then we would have obtained measures which are not locally finite.
\end{bem}

While the measure $\FP_{\vec\gamma^o}$ from Theorem \ref{main counting} is rather well-behaved, the measure $\FR_{\psi/\phi}$ from Theorem \ref{main ratios} can present all sorts of pathologies. For example, while its support is always a (possibly degenerate) interval in $\BR_{>0}$, it can well have infinitely many atoms (see Example \ref{example r atoms} in Section \ref{sec examples}).

\subsection*{Distribution of multicurves}
While Mirzakhani, Arana-Herrera, and Liu rely on the dynamics of the earthquake flow, we use a rather different approach. Thinking of multicurves as currents, we think of $k$-multicurves as $k$-multicurrents, that is, as elements in the product $\CC(\Sigma)^k=\CC(\Sigma)\times\dots\times\CC(\Sigma)$, and we study the distribution of the orbit $\Map(\Sigma)\cdot\vec\gamma^o$ in $\CC(\Sigma)^k$. More precisely, we will study the measures 
$$m(\vec\gamma^o,L)=\frac 1{L^{6g-6+2r}}\sum_{\vec\gamma\in\Map(\Sigma)\cdot\vec\gamma^o}\delta_{\frac 1L\vec\gamma}$$
as $L\to\infty$. Our main theorem, Theorem \ref{main} below, asserts that these measures converge to some measure as $L\to\infty$, but before being able to state it we need some notation. First, consider the map
\begin{equation}\label{eq multidim inter}
\BI:\CC(\Sigma)\times\CC(\Sigma)^k\to\BR_{\ge 0}^k,\ \ \ \BI:(\sigma,(\mu_i)_i)\mapsto(\iota(\sigma,\mu_i))_i
\end{equation}
where $\iota(\cdot,\cdot)$ is the geometric intersection form. Now, given a $k$-multicurve $\vec\gamma^o$, denote by $\calD_{\vec\gamma^o}$ some fundamental domain for the action $\Stab(\vec\gamma^o)\actson\CM\CL(\Sigma)$ of the stabilizer of $\vec\gamma^o$ on the space of measured laminations (see Proposition \ref{prop fundamental domain} for details) and consider the measure $\FQ_{\vec\gamma^o}$ on $\Delta_k$ given by 
\begin{equation}\label{eq q measure}
\FQ_{\vec\gamma^o}(U)=\frac{1}{\FB_{g,r}}\cdot\BI(\cdot,\vec\gamma^o)_*(\FM_{\Thu}\vert_{\calD_{\vec\gamma^o}})(\cone(U))
\end{equation}
where $\FM_{\Thu}$ is the Thurston measure on $\CM\CL(\Sigma)$, 
where $\FB_{g,r}$ is a constant that depends only on the topological type of the surface (see Section \ref{subsec curve counting} for details), and where $\cone(U)=\{tu\text{ with }t\in[0,1],u\in U\}$. Consider also the map
$$\BD:\Delta\times\CM\CL(\Sigma)\to\CC(\Sigma)^k,\ \ \BD:((a_i),\lambda)\mapsto(a_1\lambda,\dots,a_k\lambda),$$
of which we think as a thickened diagonal embedding. With this notation in place we have:

\begin{sat}\label{main}
For any $k$-multicurve $\vec\gamma^o=(\gamma_i)_i$ in $\Sigma$, we have 
\begin{equation}\label{eq main limit}
\BD_*(\FQ_{\vec\gamma^o}\otimes\FM_{\Thu})=\lim_{L\to\infty}m(\vec\gamma^o,L)
\end{equation}
in the weak-*-topology on Radon measures on $\CC(\Sigma)^k$.
\end{sat}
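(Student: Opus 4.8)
The plan is to show that the family $\{m(\vec\gamma^o,L)\}_{L>0}$ of Radon measures on $\CC(\Sigma)^k$ is weak-* precompact and that every subsequential limit is forced to equal $\BD_*(\FQ_{\vec\gamma^o}\otimes\FM_{\Thu})$. Write $d=6g-6+2r$ and $\gamma_+^o=\sum_i\gamma_i^o\in\CC(\Sigma)$. The map $(\mu_i)_i\mapsto\sum_i\mu_i$ sends $\Map(\Sigma)\cdot\vec\gamma^o$ onto $\Map(\Sigma)\cdot\gamma_+^o$ with finite fibres (of size at most $[\Stab(\gamma_+^o):\Stab(\vec\gamma^o)]\le k!$), so if $K\subset\CC(\Sigma)^k$ is compact then $m(\vec\gamma^o,L)(K)\le k!\,L^{-d}\#\bigl(\Map(\Sigma)\cdot\gamma_+^o\cap L\cdot K'\bigr)$ for a suitable compact $K'\subset\CC(\Sigma)$, which is $O(1)$ uniformly in $L$ by the upper bound in the curve-counting theorem for a single current (Section~\ref{subsec curve counting}). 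Hence precompactness holds, and it remains to identify the limit $m_\infty$.

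The first substantive step is to determine the support of $m_\infty$. Since $\iota(\gamma_i,\gamma_j)=\iota(\gamma_i^o,\gamma_j^o)$ is constant along the orbit, rescaling by $1/L$ annihilates all of these intersection numbers, so $m_\infty$ is carried by $\{(\mu_i)_i:\iota(\mu_i,\mu_j)=0\text{ for all }i,j\}$; in particular every $\mu_i$ is a measured lamination. The key claim is that $m_\infty$ is actually carried by the thickened diagonal $\BD(\Delta\times\CM\CL(\Sigma))=\{(a_i\lambda)_i\}$, which amounts to the estimate $\#\{\phi\in\Map(\Sigma)/\Stab(\vec\gamma^o):\tfrac1L\phi\vec\gamma^o\in K\}=o(L^d)$ for every compact $K$ at positive distance from that set. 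Geometrically, a mapping class contributing to such a $K$ must act essentially inside a proper subsurface and stretch the components of $\vec\gamma^o$ at incommensurable rates; one bounds their number by a covering argument reducing to the single-current counting inside the subsurface stabilisers, whose growth exponent is strictly less than $d$. This estimate, together with the closely related tightness used in the next step, is the main obstacle in the proof.

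The second step computes one push-forward of $m_\infty$. Choose a filling current $\sigma_0$ with $\Stab(\sigma_0)=\{1\}$; then $\Phi:=\BI(\sigma_0,\cdot):\CC(\Sigma)^k\to\BR_{\ge0}^k$, $(\mu_i)_i\mapsto(\iota(\sigma_0,\mu_i))_i$, is proper, so $\Phi_*m(\vec\gamma^o,L)\to\Phi_*m_\infty$. Picking a cone fundamental domain $\calD_{\vec\gamma^o}^{\CC}$ for $\Stab(\vec\gamma^o)\actson\CC(\Sigma)$ that restricts to $\calD_{\vec\gamma^o}$ on $\CM\CL(\Sigma)$ and has $\FM_{\Thu}$-null boundary, the $\Map(\Sigma)$-invariance of $\iota$ gives
\[
\Phi_*m(\vec\gamma^o,L)=\BI(\cdot,\vec\gamma^o)_*\Bigl(\tfrac1{L^d}\sum_{\sigma\in\Map(\Sigma)\cdot\sigma_0\cap\calD_{\vec\gamma^o}^{\CC}}\delta_{\frac1L\sigma}\Bigr).
\]
By the single-current curve-counting theorem, $\tfrac1{L^d}\sum_{\sigma\in\Map(\Sigma)\cdot\sigma_0}\delta_{\frac1L\sigma}\to c_{\sigma_0}\FM_{\Thu}$; restricting to the cone $\calD_{\vec\gamma^o}^{\CC}$ and pushing forward by $\BI(\cdot,\vec\gamma^o)$ — valid once one knows the tightness estimate that orbit points in $\calD_{\vec\gamma^o}^{\CC}$ which are large but almost disjoint from $\vec\gamma^o$ are negligible — yields $\Phi_*m_\infty=c_{\sigma_0}\,\BI(\cdot,\vec\gamma^o)_*(\FM_{\Thu}|_{\calD_{\vec\gamma^o}})$. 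Since $\calD_{\vec\gamma^o}$ is a cone and $\BI(\cdot,\vec\gamma^o)$ is homogeneous of degree one, this measure is homogeneous of degree $d$ on $\BR_{\ge0}^k$, hence splits as $\rho_0\otimes(t^{d-1}d{\bf t})$ in the polar coordinates $\BR_{\ge0}^k\cong\Delta\times\BR_{\ge0}$, and by \eqref{eq q measure} its angular part $\rho_0$ is a constant multiple of $\FQ_{\vec\gamma^o}$.

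Finally one upgrades this by rigidity. Using the support statement, write $m_\infty=\BD_*\tilde m_\infty$ with $\tilde m_\infty$ a Radon measure on $\Delta\times\CM\CL(\Sigma)$. The invariance of the orbit makes $\tilde m_\infty$ invariant under $\Map(\Sigma)$ acting on the $\CM\CL(\Sigma)$-factor, and the homogeneity $m_\infty(t\cdot A)=t^dm_\infty(A)$, inherited from the definition of $m(\vec\gamma^o,L)$, makes $\tilde m_\infty$ homogeneous of degree $d$ in that factor. Disintegrating over $\Delta$, almost every conditional measure on $\CM\CL(\Sigma)$ is locally finite, $\Map(\Sigma)$-invariant and homogeneous of degree $d$; by the classification of locally finite $\Map(\Sigma)$-invariant measures on $\CM\CL(\Sigma)$ the Thurston measure is the only such one (up to scaling) that is moreover homogeneous of degree $d$, so every conditional measure is proportional to $\FM_{\Thu}$. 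Hence $\tilde m_\infty=\tilde\rho\otimes\FM_{\Thu}$ for some finite measure $\tilde\rho$ on $\Delta$; comparing $\Phi_*\bigl(\BD_*(\tilde\rho\otimes\FM_{\Thu})\bigr)=\tilde\rho\otimes\bigl(\iota(\sigma_0,\cdot)_*\FM_{\Thu}\bigr)$ with the previous computation forces $\tilde\rho$ to be a constant multiple of $\FQ_{\vec\gamma^o}$, the constant not depending on $\sigma_0$. That this constant equals $1$ follows from the normalisation of $\FB_{g,r}$ (Section~\ref{subsec curve counting}) together with the precise leading term of the single-current counting asymptotics, which one checks by evaluating both $m_\infty$ and $\BD_*(\FQ_{\vec\gamma^o}\otimes\FM_{\Thu})$ on one explicit set such as $\{(\mu_i)_i:\sum_i\iota(\sigma_0,\mu_i)\le1\}$. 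Thus every subsequential limit equals $\BD_*(\FQ_{\vec\gamma^o}\otimes\FM_{\Thu})$, which with precompactness proves \eqref{eq main limit}.
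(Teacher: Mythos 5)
Your overall architecture (precompactness, identify the support of sublimits, push forward by $\BI(\sigma_0,\cdot)$ and compare with the single-orbit convergence theorem, conclude by independence of the subsequence) matches the paper's. But the step you yourself flag as ``the main obstacle'' is a genuine gap, and it is exactly the point where the paper does something you have not reproduced. To show that a sublimit $\FN$ is carried by the thickened diagonal $\BD(\Delta_k\times\CM\CL(\Sigma))$ you propose the counting estimate that orbit points $\tfrac1L\phi\vec\gamma^o$ landing near a compact set off that diagonal are $o(L^{6g-6+2r})$ in number, to be proved by a covering argument in which the contributing mapping classes ``act essentially inside a proper subsurface''. No such argument is given, and the heuristic is not justified: all components $\gamma_i^o$ (which may be non-simple, even filling) are moved by the same mapping class, and an off-diagonal accumulation point is a tuple of mutually non-intersecting but non-proportional laminations; there is no evident reduction of the relevant mapping classes to subsurface stabilizers, nor a proof that the associated growth exponent drops. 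Moreover, since weak-* limits only give an upper bound on compact sets in terms of open neighbourhoods, you genuinely need this quantitative $o(L^{d})$ bound, so the gap cannot be waved away. The paper avoids the issue entirely: by \eqref{eq this cat is a pain} the push-forward $\pi_*\FN$ (where $\pi(\vec\mu)=\sum_i\mu_i$) is a multiple of $\FM_{\Thu}$, Masur's theorem \cite{Masur almost all uq} says uniquely ergodic laminations have full Thurston measure, and Lemma \ref{lem2} shows any $k$-multicurrent representing a uniquely ergodic lamination already lies in the image of $\BD$; this is Lemma \ref{lem support Delta} and it costs nothing. A closely related unproved point recurs in your second step: pushing the restricted orbit measures forward by the non-proper map $\BI(\cdot,\vec\gamma^o)$ requires precisely the no-escape-of-mass statement you defer, which is the content one has to supply along the lines of Proposition \ref{prop special case}.

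Two further remarks on the later steps, which are closer to being correct but are asserted rather than proved. First, to obtain the product structure you disintegrate over $\Delta_k$ and invoke a classification of locally finite $\Map(\Sigma)$-invariant measures on $\CM\CL(\Sigma)$ together with homogeneity of degree $6g-6+2r$; this is a much heavier input than needed and the claimed uniqueness statement is not spelled out (one must rule out the lower-degree invariant measures supported on strata, and justify that the conditional measures inherit invariance and homogeneity). The paper gets the same conclusion from Masur's ergodicity of $\FM_{\Thu}$ \cite{Masur ergodic} alone: for each Borel $U\subset\Delta_k$ the measure $(\pi\circ\BD)_*(\hat\FN_U)$ is invariant and absolutely continuous with respect to $\FM_{\Thu}$, hence a multiple $\FQ(U)\cdot\FM_{\Thu}$, which defines $\FQ$ directly. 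Second, choosing $\sigma_0$ filling with trivial stabilizer is not sufficient for your ``cone fundamental domain with $\FM_{\Thu}$-null boundary'': the construction \eqref{eq fundamental domain} and Proposition \ref{prop fundamental domain} require the multicurve $\sigma^o$ to satisfy the stronger condition \eqref{eq technical condition}, whose existence is quoted from \cite{ES book}; without it the boundary sets $\{\iota(\sigma_0,\lambda)=\iota(\sigma_0,\phi\lambda)\}$ need not be $\FM_{\Thu}$-null. Your final normalisation and the independence-of-subsequence conclusion are fine and agree with the paper, but as written the proof of the central support statement is missing.
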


To prove Theorem \ref{main} we will rely on the measure convergence theorem we proved in \cite{ES book} (see Section \ref{subsec curve counting} below), and in fact we will also coarsely follow the strategy of its proof. First we show that the family of measures $(m(\vec\gamma^o,L))_L$ is precompact and that every sublimit $\FN$ as $L\to\infty$ is supported by the image of $\BD$. We then use the measure convergence theorem from \cite{ES book} to prove that any such sublimit is of the form $\FN=\BD_*(\FQ\otimes\FM_{\Thu})$ for some measure $\FQ$ on $\Delta_k$. Then we prove a specific version of a weaker form of Theorem \ref{main counting}, one for the function $\phi(\vec\gamma)=\BI(\sigma^o,\vec\gamma)$ for some filling multicurve satisfying a certain technical condition. We can use this version of Theorem \ref{main counting} to identify the measure $\FQ$, proving that it is identical to the measure $\FQ_{\vec{\gamma}^o}$ given in \eqref{eq q measure}. Since the measure $\FQ$, and hence the sublimit $\FN$, does not depend on the specific sequence used to extract the sublimit, we get that the actual limit exits. This concludes the sketch of the proof of Theorem \ref{main}.
\medskip

Once we have proved Theorem \ref{main}, the earlier mentioned results follow easily. For example, the measure $\FP_{\vec\gamma^o}$ from Theorem \ref{main counting} is the probability measure associated to $\FQ_{\vec\gamma^o}$:
$$\FP_{\vec\gamma^o}=\frac 1{\Vert\FQ_{\vec\gamma^o}\Vert}\cdot\FQ_{\vec\gamma^o},$$
where, in the context of measures, $\Vert\cdot\Vert$ stands for the total measure. The measure $\FR_{\psi/\phi}$ is, up to scaling, nothing other than the push-forward under $\lambda\mapsto\frac{\psi(\lambda)}{\phi(\lambda)}$ of the restriction of $\FM_{\Thu}$ to the set of measured laminations with $\phi(\lambda)\le 1$:
$$\FR_{\psi/\phi}=\frac 1{\FM_{\Thu}(\{\lambda\in\CM\CL,\ \phi(\lambda)\le 1\})}\cdot\left(\frac{\psi(\cdot)}{\phi(\cdot)}\right)_*\big(\FM_{\Thu}\vert_{\{\lambda\in\CM\CL,\ \phi(\lambda)\le 1\}}\big).$$
\medskip

This paper is organized as follows: After some preliminaries in Section \ref{sec preli} we prove in Section \ref{sec first convergence} the weaker form of a version Theorem \ref{main counting} mentioned earlier. In Section \ref{sec subconvergence} we prove the subconvergence of the measures $m(\vec\gamma^o,L)$ as $L\to\infty$ and that every sublimit is of the form $\BD_*(\FQ\otimes\FM_{\Thu})$ for some measure $\FQ$ on $\Delta_k$. Combining the results of Section \ref{sec first convergence} and Section \ref{sec subconvergence} we prove Theorem \ref{main} in Section \ref{sec main}. Theorem \ref{main counting} and Corollary \ref{kor counting} are proved in Section \ref{sec counting} and Theorem \ref{main ratios} in Section \ref{sec ratios}. In the final Section \ref{sec examples} we discuss the measures $\FP_{\vec\gamma^o}$ and $\FR_{\psi/\psi}$ in some concrete cases.

\subsection*{Acknowledgements} We thank Mingkun Liu for helpful discussions: without his help we would still be computing integrals.

\section{Some background}\label{sec preli}
As in the introduction, let $\Sigma$ be a compact, connected and orientable surface of genus $g$ with $r$ boundary components satisfying $2g+r\ge 3$. We also fix some arbitrary hyperbolic metric on $\Sigma$ with respect to which the boundary is totally geodesic. As is customary, we identify curves with their free homotopy classes and with closed geodesics in $\Sigma$. We will always assume that curves are primitive and that they are essential in the sense that they cannot be homotoped into $\D\Sigma$, nor into a point. We stress that we do {\em not} assume that curves are simple.

\subsection*{Warning} Although the results from the introduction are valid in all generality, we will assume from now on that $\Sigma$ is {\em non-exceptional}, meaning that $(g,r)$ is not one of the following: $(0,3),(1,1),(1,2)$ and $(2,0)$. In other words, we are ruling out those surfaces where the mapping class group is either finite or has non-trivial center. The center does not cause any fundamental difficulties, but it involves further complicating an already pretty exuberant notation. In any case, the reader who wants to consider one of the exceptional surfaces will have no difficulty modifying the arguments. At the end of the day we rely heavily on \cite{ES book}, where we explicitly dealt with the center. No center from now on!

\subsection{Currents}
Currents on $\Sigma$ are nothing other than $\pi_1(\Sigma)$-invariant Radon measures on the set of geodesics on the universal cover of $\Sigma$. We endow the space $\CC(\Sigma)$ of currents with the weak-*-topology. The set $\CC_K(\Sigma)$ of currents supported by the set of geodesics contained in the preimage of some compact $K\subset\Sigma\setminus\D\Sigma$ is closed. On the other hand, the set $\CC_0(\Sigma)=\cup_K\CC_K(\Sigma)$ of compactly supported currents is not. Currents were introduced by Bonahon in \cite{Bonahon86,Bonahon88,Bonahon90}, but instead of referring to the original papers or to other sources (we still recommend the very nice paper \cite{Javi-Chris}) we will mostly follow the terminology and notation from \cite{ES book}. 
\medskip

When working with currents, it often does not really matter what currents actually are: what matters is that they exist, that some well known objects are currents, and that the space of currents has some very nice properties. We summarize what we will need here:
\medskip

\noindent{{\bf (1)}
The space $\CC(\Sigma)$ of currents is a locally compact metrizable topological space. It is moreover a linear cone in a topological vector space, meaning that one can add currents and that one can multiply currents by positive numbers, and that these two operations are associative, distributive, and continuous.
\medskip

\noindent{{\bf (2)}
Curves and measured laminations are currents. In fact, the embedding $\CM\CL(\Sigma)\hookrightarrow\CC(\Sigma)$ is a homeomorphism onto its image, a subset of $\CC_K(\Sigma)$ for some suitable compact set $K\subset\Sigma\setminus\D\Sigma$. On $\CM\CL(\Sigma)$ there is a natural $\Map(\Sigma)$-invariant measure, the Thurston measure $\FM_{\Thu}$, which scales with power $6g-6+2r$, that is $\FM_{\Thu}(t\cdot U) = t^{6g-6+2r}\cdot\FM_{\Thu}(U)$. When we push forward $\FM_{\Thu}$ by the embedding from $\CM\CL(\Sigma)$ to $\CC(\Sigma)$ we get a measure to which we still refer to by the same name and which we denote by the same symbol $\FM_{\Thu}$.  Masur proved that the set of uniquely ergodic  measured laminations which intersect every curve has full Thurston measure \cite{Masur almost all uq} and that $\FM_{\Thu}$ is ergodic with respect to $\Map(\Sigma)$ action \cite{Masur ergodic}. Note that there are different reasonable normalizations for the Thurston measure---here we follow the one coming from constructing it as a scaling limit (see \cite[Theorem 4.16]{ES book}). \medskip

\noindent{{\bf (3)}
There is a continuous bilinear map
      $$\iota:\CC(\Sigma)\times\CC(\Sigma)\to\BR_{\ge 0},$$ 
the {\em geometric intersection form}, extending the geometric intersection number of both curves and measured laminations. Indeed, measured laminations are characterized as being exactly those currents $\lambda$ with vanishing self-intersection number $\iota(\lambda,\lambda)=0$.
\medskip

\noindent{{\bf (4)}
A compactly supported current $\mu\in\CC_0(\Sigma)$ is {\em filling} if we have $\iota(\mu,\lambda)>0$ for every non-zero compactly supported current $\lambda\in\CC_0(\Sigma)$. We denote the set of all filling currents by $\CC_{\fil}(\Sigma)$. A multicurve is filling if the associated current is. In topological terms, a multicurve is filling if and only if the geodesic representative of its support cuts the surface into a collection of disks and annuli, each one of the latter containing a boundary component.

A key property of filling currents is the following: Given $K\subset\Sigma\setminus\D\Sigma$ compact and $\mu\in\CC(\Sigma)$ filling, then the set $\{\lambda\in\CC_K(\Sigma)\text{ with }\iota(\mu,\lambda)\le L\}$ is compact for all $L$.
\medskip

\noindent{{\bf (5)} 
The mapping class group acts continuously on $\CC(\Sigma)$ by linear automorphisms preserving the intersection form. For every curve $\gamma$ there is a compact set $K\subset\Sigma\setminus\D\Sigma$ with $\Map(\Sigma)\cdot\gamma\subset\CC_K(\Sigma)$. 
\medskip

Besides the basic properties of currents and the space of currents that we just listed, the following theorem of Mondello and the first author \cite{Mondello} plays an important role here:

\begin{sat*}[Erlandsson-Mondello]
The action of $\Map(\Sigma)$ on the open set $\CC_{\fil}(\Sigma)$ of filling currents is discrete in the sense that for every $A\subset\CC_{\fil}(\Sigma)$ compact there are only finitely many $\phi\in\Map(\Sigma)$ with $\phi(A)\cap A\neq\emptyset$.
\end{sat*}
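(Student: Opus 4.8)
The plan is to deduce the statement from the discreteness of the set of curves inside $\CC(\Sigma)$, exploiting that a mapping class is essentially determined by its action on a fixed filling system of curves. First I would fix, once and for all, curves $\eta_1,\dots,\eta_N$ whose union fills $\Sigma$ and whose pointwise stabiliser in $\Map(\Sigma)$ is finite; such a system exists by the Alexander method, and for non-exceptional $\Sigma$ one can even take the stabiliser trivial (the exceptional surfaces are handled by the same routine modification needed for the centre in \cite{ES book}). Since we only use finitely many $\eta_j$, property $(5)$ provides a single compact set $K\subset\Sigma\setminus\D\Sigma$ with $\Map(\Sigma)\cdot\eta_j\subset\CC_K(\Sigma)$ for all $j$, so that $\phi(\eta_j)\in\CC_K(\Sigma)$ for every $\phi\in\Map(\Sigma)$ and every $j$. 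Put $F(\lambda)=\sum_j\iota(\lambda,\eta_j)=\iota(\sum_j\eta_j,\lambda)$; since the multicurve $\sum_j\eta_j$ is a filling current (its union fills $\Sigma$), property $(4)$ says that $\{\lambda\in\CC_K(\Sigma):F(\lambda)\le 1\}$ is compact, hence so is its closed subset $S_K=\{\lambda\in\CC_K(\Sigma):F(\lambda)=1\}$, which moreover avoids $0$.

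Next I would convert compactness of $A$ into uniform intersection estimates. We may assume $A\neq\emptyset$ and fix a filling current $\mu_0\in A$. Because every $\nu\in A$ is filling, the continuous function $(\nu,\lambda)\mapsto\iota(\nu,\lambda)$ is strictly positive on the compact set $A\times S_K$, hence bounded below there by some $c>0$; together with homogeneity of $\iota$ and $F$ (and the fact that $F>0$ on $\CC_K(\Sigma)\setminus\{0\}$) this gives
$$\iota(\nu,\lambda)\ \ge\ c\cdot F(\lambda)\qquad\text{for all }\nu\in A,\ \lambda\in\CC_K(\Sigma).$$
Likewise $\iota(\mu_0,\lambda)\le C\cdot F(\lambda)$ on $\CC_K(\Sigma)$ for some finite $C$, and $\iota(\nu,\eta_j)\le C_j:=\max_{\nu'\in A}\iota(\nu',\eta_j)<\infty$ for every $\nu\in A$. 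Now suppose $\phi\in\Map(\Sigma)$ satisfies $\phi(A)\cap A\neq\emptyset$, say $\mu\in A$ with $\phi(\mu)\in A$. Using $\Map(\Sigma)$-invariance of $\iota$ and the displayed inequality with $\nu=\phi(\mu)\in A$, $\lambda=\phi(\eta_j)\in\CC_K(\Sigma)$,
$$c\cdot F(\phi(\eta_j))\ \le\ \iota(\phi(\mu),\phi(\eta_j))\ =\ \iota(\mu,\eta_j)\ \le\ C_j,$$
so $F(\phi(\eta_j))\le C_j/c$ and therefore $\iota(\mu_0,\phi(\eta_j))\le C\,C_j/c=:L_j$. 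Thus, for every $\phi$ with $\phi(A)\cap A\neq\emptyset$ and every $j$, the curve $\phi(\eta_j)$ lies in the set $\{\lambda\in\CC_K(\Sigma):\iota(\mu_0,\lambda)\le L_j\}$, which is compact by property $(4)$ since $\mu_0$ is filling.

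To finish I would invoke that the set of primitive essential curves is closed and discrete in $\CC(\Sigma)$: a sequence of pairwise distinct curves cannot converge in $\CC(\Sigma)$, since convergence would bound their hyperbolic lengths---i.e. their intersection numbers with the Liouville current, which is continuous on $\CC(\Sigma)$---and only finitely many closed geodesics have bounded length. Hence each of the compact sets $\{\lambda\in\CC_K(\Sigma):\iota(\mu_0,\lambda)\le L_j\}$ contains only finitely many curves, so as $\phi$ ranges over $\{\phi\in\Map(\Sigma):\phi(A)\cap A\neq\emptyset\}$ the tuples $(\phi(\eta_1),\dots,\phi(\eta_N))$ take only finitely many values. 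Finally, if $\phi(\eta_j)=\psi(\eta_j)$ for all $j$ then $\psi^{-1}\phi$ fixes the isotopy class of each $\eta_j$ and so lies in the finite pointwise stabiliser of $\{\eta_1,\dots,\eta_N\}$; hence each tuple is realised by at most finitely many mapping classes, and $\{\phi\in\Map(\Sigma):\phi(A)\cap A\neq\emptyset\}$ is finite, as desired.

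The step I expect to be the real obstacle is the uniform lower bound $\iota(\nu,\lambda)\ge c\,F(\lambda)$ holding for all $\nu\in A$ at once: fillingness of a single current $\nu$ only yields the pointwise positivity of $\iota(\nu,\cdot)$, and uniformity must be squeezed out by compactness of $A$ against a fixed compact cross-section of $\CC_K(\Sigma)$---this is exactly what forces the whole argument to take place inside one $\CC_K(\Sigma)$ (handed to us by property $(5)$) rather than in all of $\CC(\Sigma)$. The rest is bookkeeping, modulo two standard inputs: the existence of a filling curve system with finite pointwise stabiliser (the Alexander method), and the discreteness of the set of curves in $\CC(\Sigma)$.
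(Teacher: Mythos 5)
This statement is not proved in the paper at all: it is imported as a black box from Erlandsson--Mondello \cite{Mondello}, so there is no internal argument to compare yours against; I can only judge the proposal on its own terms, and on those terms it is correct. The chain of estimates is sound: the uniform lower bound $\iota(\nu,\lambda)\ge c\,F(\lambda)$ for $\nu\in A$ and $\lambda\in\CC_K(\Sigma)$ does follow, exactly as you say, from continuity and positivity of $\iota$ on the compact set $A\times S_K$ together with homogeneity, and this is indeed the step where compactness of $A$ and the single compact set $K$ furnished by property (5) are genuinely needed; the conclusion that each $\phi(\eta_j)$ then lies in a fixed compact subset of $\CC_K(\Sigma)$, hence in a finite set of curves, is also fine (for the local finiteness of the set of curves you could equally well use the properness of the hyperbolic length function on $\CC_K(\Sigma)$, which the paper itself invokes in the proof of Lemma \ref{lem precompact}, instead of the Liouville current). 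Your two external inputs are standard and legitimately quotable: a filling system of simple closed curves in minimal position has finite pointwise stabilizer by the Alexander method, and this is what converts the finiteness of the possible tuples $(\phi(\eta_1),\dots,\phi(\eta_N))$ into finiteness of $\{\phi\in\Map(\Sigma):\phi(A)\cap A\neq\emptyset\}$. The one point worth making explicit is the convention on $\Map(\Sigma)$ for surfaces with boundary: Dehn twists about boundary-parallel curves act trivially on $\CC(\Sigma)$, so both the quoted theorem and your finite-stabilizer input require the convention in which such twists are trivial in $\Map(\Sigma)$; under that (clearly intended) convention your argument goes through, and also correctly isolates, in its last paragraph, where the real content lies.
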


In \cite{ES book} we explained how to construct fundamental domains for the action of certain subgroups of the mapping class group on certain subspaces of the space of filling currents. Below we will be working in exactly the same setting as there but, since nothing changes, it might be a good idea to state it in generality. 

So, let us suppose that $\Gamma\subset\Map(\Sigma)$ is a subgroup and $\CU\subset\CC_{\fil}(\Sigma)$ is a $\Gamma$-invariant subset of the space of filling currents. Our aim is to construct a fundamental domain for the action $\Gamma\actson\CU$. To do so let us fix a filling multicurve $\sigma^o$ satisfying
\begin{equation}\label{eq technical condition}
\FM_{\Thu}(\{\lambda\in\CM\CL(\Sigma)\text{ with }\iota(\sigma^o,\lambda)=\iota(\sigma^o,\phi(\lambda))\})=0
\end{equation}
for every $\phi\in\Map(\Sigma)$---we proved in \cite[Lemma 8.3]{ES book} that such multicurves exist. Once we have fixed $\sigma^o$ we can consider the set 
\begin{equation}\label{eq fundamental domain}
\calD_{\Gamma}=\{\lambda\in\CU\text{ with }\iota(\lambda,\sigma^o)<\iota(\lambda,\phi(\sigma^o))\text{ for all }\phi\in\Gamma\setminus\Id\}.
\end{equation}
Repeating word-by-word the argument of \cite[Proposition 8.4]{ES book} we get that $\calD_\Gamma$ is a fundamental domain in the following sense:

\begin{prop}\label{prop fundamental domain}
The set $\calD_{\Gamma}$ is open in $\CU$ and satisfies $\calD_{\Gamma}\cap\phi(\calD_{\Gamma})=\emptyset$ for all $\phi\in\Gamma\setminus\{\Id\}$. On the other hand, the translates of its closure $\overline\calD_\Gamma$ cover $\CU$, that is $\CU=\cup_{\phi\in\Gamma}\phi(\overline\calD_\Gamma)$. Finally, we also have that $\overline\calD_\Gamma\setminus\calD_\Gamma$ has vanishing Thurston measure: $\FM_{\Thu}(\overline\calD_\Gamma\setminus\calD_\Gamma)=0$.\qed
\end{prop}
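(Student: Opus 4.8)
The plan is to imitate, word for word, the construction of a Poincaré–Dirichlet fundamental domain given in \cite[Proposition 8.4]{ES book}, reading $\iota(\lambda,\cdot)$ against the various translates of $\sigma^o$ as a family of ``distances'': informally $\calD_\Gamma$ is the set of $\lambda\in\CU$ for which $\sigma^o$ is the \emph{unique} element of the orbit $\Gamma\cdot\sigma^o$ minimizing $\iota(\lambda,\cdot)$. The one fact everything rests on is a local finiteness statement: for every $\lambda\in\CU$ and every $M\ge 0$ the set $\{\phi\in\Gamma\ :\ \iota(\lambda,\phi(\sigma^o))\le M\}$ is finite. This is where the filling hypothesis on $\sigma^o$ is used. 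Since $\sigma^o$ is filling, $\iota(\sigma^o,\cdot)$ is proper on every $\CC_K(\Sigma)$; writing $\iota(\lambda,\phi(\sigma^o))=\iota(\phi^{-1}(\lambda),\sigma^o)$, the currents $\phi^{-1}(\lambda)$ with $\iota(\lambda,\phi(\sigma^o))\le M$ are confined to a compact subset of $\CC(\Sigma)$. Moreover the auxiliary orbit $\Gamma\cdot\sigma^o$ consists of filling currents, and combining the Erlandsson–Mondello discreteness theorem with the same properness (together with the elementary fact that an essential curve cannot be disjoint from a filling system of curves, which prevents escape to $\partial\CC_{\fil}(\Sigma)$) one checks that $\Gamma\cdot\sigma^o$ is closed and discrete in all of $\CC(\Sigma)$. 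Local finiteness follows.

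Granting this, the first two assertions are quick. \emph{Disjointness} is purely formal and uses neither local finiteness nor \eqref{eq technical condition}: if $\phi\in\Gamma\setminus\{\Id\}$ and $\lambda\in\calD_\Gamma\cap\phi(\calD_\Gamma)$, then testing the definition of $\calD_\Gamma$ at $\lambda$ against $\phi$ and at $\phi^{-1}(\lambda)$ against $\phi^{-1}$, and using the $\Map(\Sigma)$-invariance of $\iota$, yields both $\iota(\lambda,\sigma^o)<\iota(\lambda,\phi(\sigma^o))$ and $\iota(\lambda,\phi(\sigma^o))=\iota(\phi^{-1}(\lambda),\sigma^o)<\iota(\phi^{-1}(\lambda),\phi^{-1}(\sigma^o))=\iota(\lambda,\sigma^o)$, a contradiction. \emph{Openness}: given $\lambda_0\in\calD_\Gamma$, by local finiteness only finitely many $\phi\in\Gamma\setminus\{\Id\}$ have $\iota(\lambda_0,\phi(\sigma^o))\le\iota(\lambda_0,\sigma^o)+1$; each of these finitely many constraints $\iota(\cdot,\sigma^o)<\iota(\cdot,\phi(\sigma^o))$ is an open condition by continuity of $\iota$, while on a small relatively compact neighbourhood of $\lambda_0$ the remaining $\phi$ impose no constraint because there $\iota(\cdot,\sigma^o)$ stays below the threshold $\iota(\lambda_0,\sigma^o)+1$ whereas $\iota(\cdot,\phi(\sigma^o))$ stays above it.

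For the last two assertions I would first record that the ``walls'' $W_\phi=\{\lambda\in\CU\ :\ \iota(\lambda,\sigma^o)=\iota(\lambda,\phi(\sigma^o))\}$, $\phi\in\Gamma\setminus\{\Id\}$, are closed and $\FM_{\Thu}$-null by \eqref{eq technical condition}; since $\Map(\Sigma)$ is countable and $\FM_{\Thu}$ has full support, $\bigcup_{\psi\in\Gamma}\psi\big(\bigcup_\phi W_\phi\big)$ is a nowhere dense null set, and outside it all the values $\iota(\lambda,\phi(\sigma^o))$ $(\phi\in\Gamma)$ are pairwise distinct. \emph{Covering}: for such a $\lambda$ the minimum of $\iota(\lambda,\cdot)$ over $\Gamma\cdot\sigma^o$, which exists by local finiteness, is attained at a unique $\phi_0$, and then $\phi_0^{-1}(\lambda)\in\calD_\Gamma$; hence $\bigcup_\phi\phi(\calD_\Gamma)$ is dense in $\CU$, and a short limiting argument (again invoking local finiteness to reduce a converging sequence to one with a fixed $\phi$) upgrades this to $\CU=\bigcup_\phi\phi(\overline{\calD_\Gamma})$; in particular $\calD_\Gamma\ne\emptyset$. \emph{The measure estimate}: every $\lambda\in\overline{\calD_\Gamma}\setminus\calD_\Gamma$ satisfies $\iota(\lambda,\sigma^o)\le\iota(\lambda,\phi(\sigma^o))$ for all $\phi$ (a limit of strict inequalities) but fails strictness for some $\phi\ne\Id$, so lies in $W_\phi$; thus $\overline{\calD_\Gamma}\setminus\calD_\Gamma\subseteq\bigcup_{\phi\in\Gamma\setminus\{\Id\}}W_\phi$, a countable union of $\FM_{\Thu}$-null sets.

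The genuine obstacle is the local finiteness statement, and more precisely the uniformity it is meant to provide: in \emph{openness} (and in the limiting step of \emph{covering}) one must control, in a neighbourhood of a given $\lambda$, the interference of the infinitely many ``distant'' translates $\phi(\sigma^o)$, and it is exactly here that the filling property of $\sigma^o$ and the Erlandsson–Mondello theorem are indispensable — without them the defining family of inequalities need not be locally finite and $\calD_\Gamma$ could fail to be open. Everything else is either formal (disjointness) or a routine consequence of the null-wall observation \eqref{eq technical condition} (the measure estimate, and the passage from the open domain to its closure in the covering statement).
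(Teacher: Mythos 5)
Your proposal follows exactly the route the paper itself takes, namely the Dirichlet-domain argument it imports word-for-word from \cite[Proposition 8.4]{ES book}, and most of it is sound: the disjointness computation, the inclusion $\overline\calD_\Gamma\setminus\calD_\Gamma\subset\bigcup_{\phi\neq\Id}W_\phi$, and the nullity of each wall via \eqref{eq technical condition} are all correct. Two caveats on the finiteness/openness part: your local finiteness statement is pointwise, while openness (and your later limiting step) needs it uniformly over a compact neighbourhood $A$ of $\lambda_0$; this does follow, but by the argument you should actually write down, namely that the translates $\phi(\sigma^o)$ all lie in a fixed $\CC_K(\Sigma)$, that $\{\nu\in\CC_K(\Sigma):\iota(\mu,\nu)\le M\text{ for some }\mu\in A\}$ is compact because the elements of $\CU$ are filling (uniform properness over the compact set $A$), and that the orbit of $\sigma^o$ meets a compact set in finitely many points. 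By contrast, your version "the currents $\phi^{-1}(\lambda)$ are confined to a compact subset" presupposes that the $\Gamma$-translates of the single compactly supported current $\lambda$ stay in one fixed $\CC_K(\Sigma)$, which is itself something that needs an argument; keeping $\lambda$ fixed and moving $\sigma^o$ avoids this.

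The genuine gap is in the covering step. You conclude that the translated walls are nowhere dense in $\CU$ "since $\FM_{\Thu}$ has full support", but $\FM_{\Thu}$ is carried by $\CM\CL(\Sigma)$, and no measured lamination is filling (it has vanishing self-intersection), so $\FM_{\Thu}$ assigns measure zero to all of $\CU\subset\CC_{\fil}(\Sigma)$; nullity of the walls therefore says nothing about their interiors in $\CU$, and the density of $\bigcup_\phi\phi(\calD_\Gamma)$ in $\CU$ --- the input to your limiting argument --- is not established. To get that every $\Gamma$-orbit meets $\overline\calD_\Gamma$ you need either a perturbation inside $\CU$ that breaks ties in favour of $\sigma^o$ (this uses that the sets $\CU$ actually occurring are open subsets of the cone of currents, as in the setting of \cite{ES book}; it is not available for a completely arbitrary invariant subset of $\CC_{\fil}(\Sigma)$), or you can sidestep the issue by working with the closed Dirichlet domain $\calD'=\{\lambda\in\CU:\ \iota(\lambda,\sigma^o)\le\iota(\lambda,\phi(\sigma^o))\ \text{for all }\phi\in\Gamma\}$: the attained minimum immediately gives $\CU=\bigcup_\phi\phi(\calD')$, the difference $\calD'\setminus\calD_\Gamma$ lies in the walls and is hence $\FM_{\Thu}$-null, and this version of the statement is all that is used later, for instance in the sandwich defining $\Theta$ in the proof of Proposition \ref{prop special case}.
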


\subsection{Curve counting}\label{subsec curve counting}
Given a multicurve $\gamma^o$ in $\Sigma$, Mirzakhani studied in \cite{Maryam simple, Maryam general} the asymptotic growth of the cardinality of the set
$${\bf M}_{\Sigma,\gamma^o}(L)=\{\gamma\in\Map(\Sigma)\cdot\gamma^o\text{ with }\ell_\Sigma(\gamma)\le L\}$$
where $\ell_\Sigma$ stands for the hyperbolic length. She proved in \cite{Maryam simple} for simple multicurves and in \cite{Maryam general} for general ones that the limit
\begin{equation}\label{eq maryam counting}
\lim_{L\to\infty}\frac{1}{L^{6g-6+2r}}\vert{\bf M}_{\Sigma,\gamma^o}(L)\vert
\end{equation}
exists and is positive. 

\begin{bem}
In fact Mirzakhani gave the value for the limit above but we prefer to not do it explicitly now: we are going to express it shortly in a somewhat different way than she did and we do not want to risk causing any confusion.
\end{bem}

In \cite{ES book} we obtained the existence of \eqref{eq maryam counting} from a measure convergence theorem (as did Mirzakhani already in \cite{Maryam simple} in the case that $\gamma^o$ is simple). More precisely, denoting as always by $\delta_x$ the Dirac probability measure centered on $x$, we considered the measures
\begin{equation}\label{eq counting measure}
m^L_{\gamma^o}=\frac 1{L^{6g-6+2r}}\sum_{\gamma\in\Map(\Sigma)\cdot\gamma^o}\delta_{\frac 1L\gamma}
\end{equation}
on the space of currents $\CC(\Sigma)$ and proved that they converge when $L\to\infty$:

\begin{sat*}\cite[Theorem 8.1]{ES book}
Given any multicurve $\gamma^o\in\CC(\Sigma)$ let $\calD_{\Stab(\gamma^o)}$ be a fundamental domain for the action of $\Stab(\gamma^o)$ on the set 
$$\CC_{\gamma^o}=\{\lambda\in\CC_0(\Sigma)\text{ with }\lambda+\gamma^o\text{ filling}\}$$ 
and set
$$\FC(\gamma^o)=\FM_{\Thu}(\{\lambda\in\calD_{\Stab(\gamma^o)},\ \iota(\gamma^o,\lambda)\le 1\}).$$
Then we have
\begin{equation}\label{eq measure convergence}
\lim_{L\to\infty}m^L_{\gamma^o}=\frac{\FC(\gamma^o)}{\FB_{g,r}}\cdot\FM_{\Thu}.
\end{equation}
where $\FB_{g,r}$ depends just on the genus and number of boundary components of $\Sigma$.
\end{sat*}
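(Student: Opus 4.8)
The plan is to obtain the convergence from four ingredients: precompactness of the family $(m^L_{\gamma^o})_L$, the structure of its weak-$*$ sublimits, a reduction of a general multicurve $\gamma^o$ to the case of a filling one, and the bookkeeping of the constant. Write $d=6g-6+2r$. For precompactness one must bound $|\{\gamma\in\Map(\Sigma)\cdot\gamma^o:\tfrac1L\gamma\in B\}|$ by $O(L^d)$ for every compact $B\subset\CC(\Sigma)$. Since the orbit lies in a fixed $\CC_K(\Sigma)$ by property (5) and $\iota(\mu,\cdot)$ is proper on $\CC_K(\Sigma)$ for any fixed filling multicurve $\mu$ by property (4), it suffices to bound $|\{\gamma\in\Map(\Sigma)\cdot\gamma^o:\iota(\mu,\gamma)\le L\}|$; comparing $\iota(\mu,\cdot)$ with the hyperbolic length on the compact set of projectivised currents containing $\overline{\Map(\Sigma)\cdot\gamma^o}$ reduces this to the elementary crude estimate for curves of fixed topological type and bounded length — each Dehn--Thurston coordinate of such a curve is $O(L)$, the coordinates determine the integral multicurve, and a box of side $O(L)$ in $\BR^d$ contains $O(L^d)$ integer points. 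Hence $(m^L_{\gamma^o})_L$ is precompact in the weak-$*$ topology on Radon measures on $\CC(\Sigma)$.

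Let $m$ be a weak-$*$ sublimit along some $L_k\to\infty$. Since $\Map(\Sigma)$ permutes the orbit, each $m^{L_k}_{\gamma^o}$, and therefore $m$, is $\Map(\Sigma)$-invariant. Because $\Map(\Sigma)$ preserves the intersection form, $\iota(\tfrac1{L}\gamma,\tfrac1{L}\gamma)=\tfrac1{L^2}\iota(\gamma^o,\gamma^o)\to0$ uniformly over $\gamma\in\Map(\Sigma)\cdot\gamma^o$, so testing against a continuous cutoff supported where $\iota(\cdot,\cdot)\ge\delta$ shows that $m$ gives no mass to $\{\lambda:\iota(\lambda,\lambda)\ge\delta\}$ for every $\delta>0$; by property (3) the measure $m$ is thus supported on $\CM\CL(\Sigma)$. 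So every sublimit is a $\Map(\Sigma)$-invariant Radon measure on $\CM\CL(\Sigma)$, and it remains to show it is unique and to identify it.

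To identify the sublimit $m$ I would first prove the theorem when $\gamma^o$ is itself a filling multicurve. Then $\Map(\Sigma)\cdot\gamma^o\subset\CC_{\fil}(\Sigma)$ is a discrete subset by the Erlandsson--Mondello theorem, and a direct count of this \emph{lattice} inside the dilates of $\{\lambda:\iota(\gamma^o,\lambda)\le1\}$, organised via the fundamental domain $\calD_{\Stab(\gamma^o)}$ of Proposition \ref{prop fundamental domain} (whose relative boundary is $\FM_{\Thu}$-null), together with the ergodicity of $\FM_{\Thu}$ and its characterisation as a scaling limit, yields $m^L_{\gamma^o}\to\tfrac{\FC(\gamma^o)}{\FB_{g,r}}\FM_{\Thu}$, with $\FB_{g,r}$ the topological normalisation appearing in that count. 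For a general $\gamma^o$, fix a filling multicurve $\sigma^o$. The set $E_s=\{\lambda:\iota(\sigma^o,\lambda)\le s\}$ is compact in $\CM\CL(\Sigma)$ with $\FM_{\Thu}$-null topological boundary (as $\iota(\sigma^o,\cdot)$ is piecewise linear in Dehn--Thurston coordinates), so $m(E_s)=\lim_k m^{L_k}_{\gamma^o}(E_s)=\lim_k\tfrac1{L_k^d}|\{\gamma\in\Map(\Sigma)\cdot\gamma^o:\iota(\sigma^o,\gamma)\le sL_k\}|$. Unfolding via $\iota(\sigma^o,\psi\gamma^o)=\iota(\psi^{-1}\sigma^o,\gamma^o)$ rewrites this count — up to a bounded multiplicity coming from the finite stabiliser $\Stab(\sigma^o)$ and from the action of $\Stab(\gamma^o)$ — as the number of points of the discrete orbit $\Map(\Sigma)\cdot\sigma^o$ lying in the cone $\calD_{\Stab(\gamma^o)}\cap\{\iota(\gamma^o,\cdot)\le sL_k\}=sL_k\cdot\bigl(\calD_{\Stab(\gamma^o)}\cap\{\iota(\gamma^o,\cdot)\le1\}\bigr)$, and the filling case applied to $\sigma^o$ makes this asymptotic to a fixed multiple of $s^d\,\FM_{\Thu}\bigl(\calD_{\Stab(\gamma^o)}\cap\{\iota(\gamma^o,\cdot)\le1\}\bigr)=s^d\,\FC(\gamma^o)$. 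Letting $\sigma^o$ range over filling multicurves (and their positive integer combinations) this pins down every pushforward of $m$ along an intersection map $\lambda\mapsto(\iota(\sigma_i,\lambda))_i$, since knowing $m(\{\sum_i c_i\iota(\sigma_i,\cdot)\le s\})$ for all $c_i,s>0$ determines the Laplace transform of that pushforward; choosing the $\sigma_i$ so the intersection map separates points of $\CM\CL(\Sigma)$, this forces $m=\tfrac{\FC(\gamma^o)}{\FB_{g,r}}\FM_{\Thu}$. As the limiting value does not depend on the subsequence $(L_k)$, the full limit exists.

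The part I expect to be hardest is the filling base case together with the unfolding bookkeeping: establishing the lattice-point asymptotic inside $\CC_{\fil}(\Sigma)$ and showing its limit is exactly $\tfrac{\FC(\gamma^o)}{\FB_{g,r}}\FM_{\Thu}$ rather than merely some $\Map(\Sigma)$-invariant measure on $\CM\CL(\Sigma)$, and then, in the reduction, keeping careful track of the finite stabiliser multiplicities and of the $\Stab(\gamma^o)$-action so that the count of orbit points inside $\calD_{\Stab(\gamma^o)}\cap\{\iota(\gamma^o,\cdot)\le L\}$ is clean and the normalisation $\FB_{g,r}$ produced in this way genuinely depends only on $(g,r)$.
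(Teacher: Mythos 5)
This statement is quoted verbatim from \cite[Theorem 8.1]{ES book}; the present paper offers no proof of it, so your attempt has to be measured against the proof in that reference. Your outer scaffolding --- precompactness, sublimits supported on $\CM\CL(\Sigma)$ via the vanishing of the rescaled self-intersection, mapping class group invariance, and identification of the sublimit through pushforwards under $\iota(\sigma^o,\cdot)$ and an unfolding over a fundamental domain for $\Stab(\gamma^o)$ --- is the right general architecture (it is essentially the scheme this paper itself uses to deduce its Theorem \ref{main} from the quoted theorem). But two load-bearing steps are genuinely missing. First, the crude bound behind precompactness: for a multicurve with self-intersecting components there are no Dehn--Thurston coordinates, so ``each coordinate is $O(L)$ and the coordinates determine the integral multicurve'' is not available; the polynomial bound $|\{\gamma\in\Map(\Sigma)\cdot\gamma^o:\ \ell_X(\gamma)\le L\}|=O(L^{6g-6+2r})$ for non-simple $\gamma^o$ is a substantial theorem (it is exactly \cite[Theorem 5.19]{ES book}, which this paper invokes for its own precompactness lemma), not an elementary lattice-point count.

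Second, the identification step. Your reduction to a ``filling base case'' is circular as stated: a filling multicurve is itself a non-simple multicurve, so the base case is an instance of the very theorem being proved, and the mechanism you offer for it --- a ``direct count of this lattice'' formed by the discrete orbit in $\CC_{\fil}(\Sigma)$, plus ergodicity of $\FM_{\Thu}$ and its scaling-limit characterisation --- is not an argument: the orbit of a filling multicurve carries no chart or lattice structure in which such a count could be performed (its equidistribution against $\FM_{\Thu}$ \emph{is} the content of the theorem), and Masur's ergodicity only identifies invariant measures already known to be absolutely continuous with respect to $\FM_{\Thu}$. Nothing in your proposal excludes sublimits charging other $\Map(\Sigma)$-invariant Radon measures on $\CM\CL(\Sigma)$, for instance measures supported on orbits of simple multicurves. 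The actual proof runs differently: the genuine lattice count occurs only for integral simple multicurves in train-track charts, where $\FM_{\Thu}$ really is a scaling limit; one then shows that every sublimit for a general orbit is absolutely continuous with respect to $\FM_{\Thu}$ (this is where comparison with simple multicurve counts enters) before ergodicity can be applied; and the constant $\FC(\gamma^o)/\FB_{g,r}$ is pinned down by the unfolding identity together with a summation over all orbits of integral simple multicurves, which is precisely why $\FB_{g,r}=\sum_{\gamma\in\Map(\Sigma)\bs\CM\CL_\BZ(\Sigma)}\FC(\gamma)$ appears. So the proposal reproduces the soft outer layer but leaves both hard steps unproven, and the route proposed for the second one would not work as written.
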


\begin{bem}
The constant $\FB_{g,r}$ in the theorem can actually be expressed in the following two ways.
In \cite{ES book} we defined it as 
\begin{equation*}
\FB_{g,r}
=\sum_{\gamma\in\Map(\Sigma)\bs\CM\CL_\BZ(\Sigma)}\FC(\gamma)
\end{equation*}
where $\CM\CL_\BZ(\Sigma)$ is the set of all simple integrally weighted multicurves,
but closer to what Mirzakhani did in \cite{Maryam simple}, it can also be expressed as
\begin{equation*}
\FB_{g,r}=2^{2g-3+r}\cdot\int_{\CM_{g,r}}\FM_{\Thu}(\{\lambda\in\CM\CL(\Sigma)\text{ with }\ell_X(\lambda)\le 1\})\ dX
\end{equation*}
 where the integral is taken with respect to the Weil-Petersson metric on $\CM_{g,r}$, the moduli space of hyperbolic surfaces of genus $g$ and $r$ cusps. See \cite[Proposition 8.8]{ES book} for the relationship between our and Mirzakhani's constants. 
\end{bem}

From the measure convergence theorem above we obtained that for any positive, homogenous and continuous function $\phi:\CC(\Sigma)\to\BR_{\ge 0}$ we have convergence when in \eqref{eq maryam counting} we replace the hyperbolic length function $\ell_\Sigma$ by the function $\phi$. This applies in particular to functions like $\lambda\mapsto\iota(\sigma^o,\lambda)$ for some filling current $\sigma^o$. Since this is the setting we are going to be working in we state this precisely. For a filling current $\sigma^o$ consider the set
\begin{equation}\label{eq set counted}
{\bf M}_{\sigma^o,\gamma^o}(L)=\{\gamma\in\Map(\Sigma)\cdot\gamma^o\text{ with }\iota(\sigma^o,\gamma)\le L\}.
\end{equation}
Then we have
\begin{equation}\label{eq counting}
\lim_{L\to\infty}\frac{1}{L^{6g-6+2r}}\vert{\bf M}_{\sigma^o,\gamma^o}(L)\vert=\frac{\FC(\gamma^o)}{\FB_{g,r}}\cdot B(\sigma^o)
\end{equation}
where $B(\sigma^o)=\FM_{\Thu}(\{\lambda\in\CM\CL(\Sigma)\text{ with }\iota(\sigma^o,\lambda)\le 1\})$.

\subsection{$k$-multicurrents}
For lack of a better name, we refer to the elements $\vec\mu=(\mu_1,\dots,\mu_k)$ in $\CC(\Sigma)^k$ as {\em $k$-multicurrents}. Note for example that $k$-multicurves are $k$-multicurrents. As the reader has already seen, we indicate such $k$-multicurves and $k$-multicurrents either by writing out the tuple, maybe in the short form $(\mu_i)_i$, or by decorating them with a little arrow on top. The mapping class group acts diagonally on the space of $k$-multicurrents:
$$\phi((\mu_i)_i)=(\phi(\mu_i))_i$$
The mapping class group equivariant map
$$\pi:\CC(\Sigma)^k\to\CC(\Sigma),\ \ \pi((\mu_i)_i)=\sum_i\mu_i$$
will play a key role in this paper, but before going any further let us agree on some terminology: we will refer to the entries $\mu_1,\dots,\mu_k$ of a $k$-multicurrent $\vec\mu=(\mu_1,\dots,\mu_k)\in\CC^k(\Sigma)$ as its {\em components} and we will say that the $k$-multicurrent $\vec\mu$ {\em represents} the current $\pi(\vec\mu)$. Note that a $k$-multicurrent which represents a multicurve is nothing other than a $k$-multicurve.

Although it is very important that we work with general currents, we will be especially interested on those $k$-multicurrents representing measured laminations. Let us prove two simple lemmas that will look self-evident to experts:

\begin{lem}\label{lem1}
A $k$-multicurrent in $\vec\mu=(\mu_1,\dots,\mu_k)\in\CC^k(\Sigma)$ represents a measured lamination if and only if all the components $\mu_i$ are also measured laminations satisfying $\iota(\mu_i,\mu_j)=0$ for all $i$ and $j$.
\end{lem}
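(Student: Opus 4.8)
The plan is to prove both implications using the characterization of measured laminations among currents as exactly those currents with vanishing self-intersection, i.e. property \textbf{(3)} in the list above: $\lambda\in\CM\CL(\Sigma)$ if and only if $\iota(\lambda,\lambda)=0$. This reduces everything to a computation with the bilinear intersection form $\iota$.

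For the forward direction, suppose $\vec\mu=(\mu_1,\dots,\mu_k)$ represents a measured lamination, i.e. $\pi(\vec\mu)=\sum_i\mu_i=:\lambda$ is a measured lamination, so $\iota(\lambda,\lambda)=0$. Expanding by bilinearity, $0=\iota\bigl(\sum_i\mu_i,\sum_j\mu_j\bigr)=\sum_{i,j}\iota(\mu_i,\mu_j)$. Since $\iota$ takes values in $\BR_{\ge 0}$, every term $\iota(\mu_i,\mu_j)$ vanishes; in particular each diagonal term $\iota(\mu_i,\mu_i)=0$, so each $\mu_i$ is itself a measured lamination (again by property \textbf{(3)}), and the off-diagonal terms give $\iota(\mu_i,\mu_j)=0$ for all $i,j$.

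The converse is the reverse of the same computation: if each $\mu_i$ is a measured lamination then $\iota(\mu_i,\mu_i)=0$, and if additionally $\iota(\mu_i,\mu_j)=0$ for all pairs, then by bilinearity $\iota(\pi(\vec\mu),\pi(\vec\mu))=\sum_{i,j}\iota(\mu_i,\mu_j)=0$, so $\pi(\vec\mu)$ is a measured lamination. One should also check that $\pi(\vec\mu)$ is a compactly supported current so that property \textbf{(3)} applies cleanly: since measured laminations lie in $\CC_K(\Sigma)$ for a suitable compact $K$ (property \textbf{(2)}), finitely many of them lie in a common $\CC_{K'}(\Sigma)$, and this set is closed under addition.

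There is no real obstacle here — the statement is, as the authors say, self-evident to experts, and the only content is that $\iota$ is bilinear and nonnegative and that vanishing self-intersection characterizes $\CM\CL(\Sigma)$. The one point worth stating carefully is the nonnegativity of $\iota$, which is what lets us pass from the vanishing of a sum of nonnegative reals to the vanishing of each summand; this is exactly where the "geometric" (as opposed to algebraic) intersection form is used.
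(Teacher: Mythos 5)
Your proposal is correct and follows essentially the same argument as the paper: expand $\iota(\pi(\vec\mu),\pi(\vec\mu))$ by bilinearity, use nonnegativity of $\iota$ to kill every term, and invoke the characterization of measured laminations as currents with vanishing self-intersection. The converse is indeed just the same computation run backwards, exactly as in the paper's (brief) proof.
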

\begin{proof}
By assumption we have that 
$$0=\iota\left(\pi(\vec\mu),\pi(\vec\mu)\right)=\iota\left(\sum_i\mu_i,\sum_j\mu_j\right)=\sum_{i,j}\iota(\mu_i,\mu_j)$$
and since the intersection form only takes non-negative values it follows that $\iota(\mu_i,\mu_j)=0$ for all $i,j$. Now the claim follows from the fact that measured laminations are characterized as being the only currents with vanishing self-intersection number.
\end{proof}

Recall now that a measured lamination is {\em uniquely ergodic} if its support only carries a single transverse measure up to scaling.

\begin{lem}\label{lem2}
If a $k$-multicurrent $\vec\mu\in\CC^k(\Sigma)$ represents an uniquely ergodic measured lamination $\lambda$ then there is a point $(a_i)_i\in\Delta_k$ in the standard $k$-simplex with $\vec\mu=(a_1\lambda,\dots,a_k\lambda)$.
\end{lem}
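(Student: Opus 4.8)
The plan is to use Lemma~\ref{lem1} to reduce to a purely lamination-theoretic statement — that every transverse measure carried by a uniquely ergodic lamination is a scalar multiple of it — and then read off the result by keeping track of the scalars. First I would apply Lemma~\ref{lem1} to the hypothesis that $\vec\mu$ represents the measured lamination $\lambda$: it yields immediately that each component $\mu_i$ is itself a measured lamination (the vanishing of the pairwise intersection numbers will not even be needed, as it becomes automatic a posteriori).

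The key step is a domination observation. Since $\sum_i\mu_i=\pi(\vec\mu)=\lambda$ and each $\mu_i$ is a non-negative Radon measure on the space of geodesics of the universal cover, we have $\mu_i(A)\le\lambda(A)$ for every Borel set $A$, and hence $\supp(\mu_i)\subseteq\supp(\lambda)$. As $\supp(\lambda)$ is precisely the set of geodesics underlying the geodesic lamination $|\lambda|$, this says that each $\mu_i$ is a transverse measure carried by $|\lambda|$. Now unique ergodicity of $\lambda$ means exactly that $|\lambda|$ carries a unique transverse measure up to scaling; therefore each non-zero $\mu_i$ is a positive multiple $a_i\lambda$ of $\lambda$, and setting $a_i=0$ when $\mu_i=0$ we get $\mu_i=a_i\lambda$ with $a_i\ge 0$ in every case.

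It remains to check that $(a_i)_i$ lies in $\Delta_k$. Summing, $\lambda=\sum_i\mu_i=\big(\sum_i a_i\big)\lambda$, and since $\lambda\neq 0$ (a uniquely ergodic measured lamination is non-zero) this forces $\sum_i a_i=1$. Hence $(a_i)_i\in\Delta_k$ and $\vec\mu=(a_1\lambda,\dots,a_k\lambda)$, as claimed.

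As the authors anticipate, there is no serious obstacle here; the only point that deserves a sentence of care is the transition from ``$\mu_i$ is a measured lamination dominated by $\lambda$'' to ``$\mu_i$ is a transverse measure on $|\lambda|$ to which unique ergodicity applies'' — that is, that domination of measures on the space of geodesics passes to inclusion of the underlying laminations — together with making explicit that ``a single transverse measure up to scaling'' delivers a genuine proportionality with a non-negative constant (positive when $\mu_i\neq 0$).
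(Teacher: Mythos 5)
Your argument is correct and is essentially the same as the paper's: each $\mu_i$ is a measured lamination dominated by $\lambda=\sum_j\mu_j$, hence has support inside $\supp(\lambda)$, so unique ergodicity gives $\mu_i=a_i\lambda$ with $a_i\ge 0$, and summing forces $\sum_i a_i=1$. The extra care you take with the domination step and the zero components is fine but does not change the route.
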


\begin{proof}
Each component $\mu_i$ of $\vec\mu$ represents a measured lamination with support contained in that of $\lambda$. It follows thus from the assumption that $\lambda$ is uniquely ergodic that there is some $a_i\ge 0$ with $\mu_i=a_i\lambda$. The assumption that the vector $(a_1\lambda,\dots,a_k\lambda)$ represents $\lambda$ implies that $\sum a_i=1$, meaning that $(a_1,\dots,a_k)\in\Delta_k$.
\end{proof}

\section{A first convergence result}\label{sec first convergence}
Still with the same notation let $\Sigma$ be a compact orientable connected surface with genus $g$ and $r$ boundary components, and recall that we are just dealing with the case that $\Sigma$ is non-exceptional. In this section we prove a specific case of a close relative of Theorem \ref{main counting}, but first let us recall the map \eqref{eq multidim inter} from the introduction:
$$\BI:\CC(\Sigma)\times\CC(\Sigma)^k\to\BR_{\ge 0}^k,\ \BI(\sigma,(\mu_i)_i)=(\iota(\sigma,\mu_i)_i).$$
This is what we prove:

\begin{prop}\label{prop special case}
Let $\vec\gamma^o$ be a $k$-multicurve in $\Sigma$, let $\sigma^o$ be a filling multicurve in $\Sigma$ satisfying \eqref{eq technical condition}, and let $\calD=\calD_{\Stab(\vec\gamma^o)}$ be as in \eqref{eq fundamental domain}. We have
$$\BI(\cdot,\vec\gamma^o)_*\left(\frac{B(\sigma^o)}{\FB_{g,r}}\cdot(\FM_{\Thu}\vert_{\calD})\right)=\lim_{L\to\infty}\frac 1{L^{6g-6+2r}}\sum_{\vec\gamma\in\Map(\Sigma)\cdot\vec\gamma^o}\delta_{\frac 1L \BI(\sigma^o,\vec\gamma)}$$
in the weak-*-topology on measures on $\BR_{\ge 0}^k$. Here $B(\sigma^o)=\FM_{\Thu}(\{\lambda\in\CM\CL(\Sigma)\text{ with }\iota(\sigma^o,\lambda)\le 1\})$.
\end{prop}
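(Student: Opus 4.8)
The plan is to deduce Proposition \ref{prop special case} from the measure convergence theorem \cite[Theorem 8.1]{ES book} quoted above, by pushing forward the curve-counting measures $m^L_{\vec\gamma^o}$ under the map $\BI(\sigma^o,\cdot)$ and carefully splitting the orbit $\Map(\Sigma)\cdot\vec\gamma^o$ into $\Stab(\vec\gamma^o)$-cosets. The point is that $\BI(\sigma^o,\phi\vec\gamma^o)=(\iota(\sigma^o,\phi\gamma_i))_i=(\iota(\phi^{-1}\sigma^o,\gamma_i))_i$, so the quantity we are counting depends only on the image of $\phi$ in $\Map(\Sigma)/\Stab(\vec\gamma^o)$ once we realize that what really varies is the "test" current $\phi^{-1}\sigma^o$, not $\vec\gamma^o$ itself. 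Concretely, writing the orbit sum over cosets, one wants to recognize
$$\frac 1{L^{6g-6+2r}}\sum_{\vec\gamma\in\Map(\Sigma)\cdot\vec\gamma^o}\delta_{\frac1L\BI(\sigma^o,\vec\gamma)}$$
as a push-forward of a genuine curve-counting measure on $\CC(\Sigma)$ and then invoke \eqref{eq measure convergence}.

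First I would set up the correspondence precisely. Since $\Sigma$ is non-exceptional, $\Stab(\vec\gamma^o)$ has finite index issues only of the harmless sort, and the orbit map $\phi\Stab(\vec\gamma^o)\mapsto\phi\vec\gamma^o$ is a bijection $\Map(\Sigma)/\Stab(\vec\gamma^o)\to\Map(\Sigma)\cdot\vec\gamma^o$. Then
$$\sum_{\vec\gamma\in\Map(\Sigma)\cdot\vec\gamma^o}\delta_{\frac1L\BI(\sigma^o,\vec\gamma)}
=\sum_{\psi\in\Map(\Sigma)/\Stab(\vec\gamma^o)}\delta_{\frac1L(\iota(\psi^{-1}\sigma^o,\gamma_i))_i}.$$
Next I would use the fundamental domain $\calD=\calD_{\Stab(\vec\gamma^o)}$ from \eqref{eq fundamental domain}: by Proposition \ref{prop fundamental domain}, as $\psi$ ranges over coset representatives, the currents $\psi^{-1}\sigma^o$ correspond (up to a $\FM_{\Thu}$-null set) to picking one point per $\Stab(\vec\gamma^o)$-orbit, and the counting is governed by the Thurston measure restricted to $\calD$. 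The scaling weight $\frac1{L^{6g-6+2r}}$ matches the homogeneity degree of $\FM_{\Thu}$. Applying the measure convergence theorem \cite[Theorem 8.1]{ES book} — more precisely its consequence \eqref{eq counting} for the function $\lambda\mapsto\iota(\sigma^o,\lambda)$ — together with the fact that $\iota(\cdot,\cdot)$ is continuous and that the condition \eqref{eq technical condition} guarantees the boundary $\overline\calD\setminus\calD$ is Thurston-null, one identifies the limit as $\frac{B(\sigma^o)}{\FB_{g,r}}\cdot\BI(\cdot,\vec\gamma^o)_*(\FM_{\Thu}|_{\calD})$.

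The main obstacle, and where I would spend the most care, is the bookkeeping that turns the orbit sum into a push-forward of $\FM_{\Thu}|_{\calD}$ with the correct constant. One has to check: (a) that the map $\BI(\sigma^o,\cdot)$ is proper enough on the relevant subsets for weak-* convergence to pass through the push-forward — here property \textbf{(4)} (filling currents have compact sublevel sets of $\iota(\sigma^o,\cdot)$ inside any $\CC_K$) is exactly what makes $\BI(\sigma^o,\cdot)_*$ well-defined on Radon measures and continuous; (b) that no mass escapes to infinity, which again follows from $\sigma^o$ being filling together with the fact that $\Map(\Sigma)\cdot\vec\gamma^o$ lies in a fixed $\CC_K(\Sigma)^k$ by property \textbf{(5)}; and (c) that the constant works out — the $B(\sigma^o)$ factor is precisely the normalization appearing in \eqref{eq counting}, and $\calD_{\Stab(\vec\gamma^o)}$ here plays the role of $\calD_{\Stab(\gamma^o)}$ there with $\gamma^o$ replaced by the underlying multicurve $\pi(\vec\gamma^o)$. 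Once these points are in place, the proposition follows by testing against continuous compactly supported functions on $\BR_{\ge0}^k$ and unwinding the definitions; the condition \eqref{eq technical condition} is used exactly once, to ensure the boundary of the fundamental domain contributes nothing in the limit.
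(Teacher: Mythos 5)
Your proposal takes essentially the same route as the paper: the swap $\BI(\sigma^o,\phi\vec\gamma^o)=\BI(\phi^{-1}\sigma^o,\vec\gamma^o)$, the choice of coset representatives sandwiched between the conditions $\phi^{-1}\sigma^o\in\calD$ and $\phi^{-1}\sigma^o\in\overline\calD$, the measure convergence theorem \cite[Theorem 8.1]{ES book} applied to the orbit of $\sigma^o$ and restricted to the fundamental domain, the vanishing of $\FM_{\Thu}(\overline\calD\setminus\calD)$, and continuity of $\BI(\cdot,\vec\gamma^o)$. The only detail worth adding is that \eqref{eq technical condition} is used twice, not once: besides feeding into the fundamental-domain properties, it guarantees that $\sigma^o$ has trivial stabilizer, which is exactly what identifies the constant from the convergence theorem as $\FC(\sigma^o)=B(\sigma^o)$.
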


\begin{proof}
Denote the measures inside the limit by
$$\overline m(\sigma^o,\vec\gamma^o,L)=\frac 1{L^{6g-6+2r}}\sum_{\vec\gamma\in\Map(\Sigma)\cdot\vec\gamma^o}\delta_{\frac 1L(\BI(\sigma^o,\vec\gamma))}.$$
Note that these measures are supported on $\mathbb{R}_{\geq0}^k$ as opposed to $\Delta_k\times\mathbb{R}_{\geq0}$ which is the case for the measures $\underline m(\phi, \vec\gamma^0, L)$ in Theorem \ref{main counting}, and this is the reason why we use a different decoration on $m$.

Since we have $\BI(\sigma^o,\phi(\vec\gamma))=\BI(\phi^{-1}(\sigma^o),\vec\gamma)$ we can, for any choice of a set $\Theta\subset\Map(\Sigma)$ of representatives of classes $\Map(\Sigma)/\Stab(\vec\gamma^o)$, rewrite them as
$$\overline m(\sigma^o,\vec\gamma^o,L)=\BI(\cdot,\vec\gamma^o)_*\left(\frac 1{L^{6g-6+2r}}\sum_{\phi\in\Theta}\delta_{\frac 1L \phi^{-1}(\sigma^o)}\right).$$
We now get from Proposition \ref{prop fundamental domain} that $\Theta$ can be chosen to satisfy
$$\{\phi\in\Map(\Sigma)\vert\phi^{-1}(\sigma^o)\in\calD\}\subset\Theta\subset\{\phi\in\Map(\Sigma)\vert\phi^{-1}(\sigma^o)\in\overline\calD\}.$$
Indeed, the first inclusion follows from the fact that every $\Stab(\vec\gamma^o)$-orbit meets $\calD$ at most once, and the second because $\overline\calD$ meets every orbit at least once. Anyways, from here we get the following comparisons for the measure $\overline m(\sigma^o,\vec\gamma^o,L)$:
\begin{equation}\label{ch10 equation have to go now1}
\begin{split}
\overline m(\sigma^o,\vec\gamma^o,L)\ge
\BI(\cdot,\vec\gamma^o)_*\left(\frac 1{L^{6g-6+2r}}\sum_{
\sigma\in(\Map(\Sigma)\cdot \sigma^o)\cap\calD
}\delta_{\frac 1L\sigma}\right)
\\
\overline m(\sigma^o,\vec\gamma^o,L)\le
\BI(\cdot,\vec\gamma^o)_*\left(\frac 1{L^{6g-6+2r}}\sum_{
\sigma\in(\Map(\Sigma)\cdot\sigma^o)\cap\overline\calD
}\delta_{\frac 1L\sigma}\right)
\end{split}
\end{equation}
From \eqref{eq measure convergence} applied to $\sigma^o$, we get that
$$\lim_{L\to\infty}\frac 1{L^{6g-6+2r}}\sum_{\sigma\in\Map(\Sigma)\cdot\sigma^o}\delta_{\frac 1L\sigma}=\frac{\FC(\sigma^o)}{\FB_{g,r}}\FM_{\Thu}.$$
Taking into account that $\FM_{\Thu}(\overline\calD\setminus\calD)=0$ we deduce that
$$\lim_{L\to\infty}\frac 1{L^{6g-6+2r}}\sum_{
\sigma\in(\Map(\Sigma)\cdot\sigma^o)\cap\calD
}\delta_{\frac 1L\sigma}=\frac{\FC(\sigma^o)}{\FB_{g,r}}\cdot(\FM_{\Thu}\vert_{\calD})$$
and that
$$\lim_{L\to\infty}\frac 1{L^{6g-6+2r}}\sum_{
\sigma\in(\Map(\Sigma)\cdot\sigma^o)\cap\overline\calD
}\delta_{\frac 1L\sigma}=\frac{\FC(\sigma^o)}{\FB_{g,r}}\cdot(\FM_{\Thu}\vert_{\overline\calD}).$$
Using once again that $\overline{\calD}\setminus\calD$ has vanishing Thurston measure, we get that these two limits actually agree. From the continuity of $\BI(\cdot,\vec\gamma^0)$ and from \eqref{ch10 equation have to go now1} we get thus that
\begin{equation}\label{ch10 equation listening to james obrien}
\lim_{L\to\infty}\overline m(\sigma^o,\vec\gamma^o,L)=\BI(\cdot,\vec\gamma^o)_*\left(\frac{\FC(\sigma^o)}{\FB_{g,r}}\cdot(\FM_{\Thu}\vert_{\calD})\right).
\end{equation}
The claim follows once we note that $B(\sigma^o)=\FC(\sigma^o)$ because \eqref{eq technical condition} implies that $\sigma^o$ has trivial stabilizer.
\end{proof}

\section{Subconvergence and disintegration of sublimits}\label{sec subconvergence}
As all along, let $\Sigma$ be a non-exceptional, compact, connected, orientable surface with genus $g$ and $r$ boundary components. Given a $k$-multicurve $\vec\gamma^o\in\CC(\Sigma)^k$ we consider as in the introduction the measures
$$m(\vec\gamma^o,L)=\frac 1{L^{6g-6+2r}}\sum_{\vec\gamma\in\Map(\Sigma)\cdot\vec\gamma^o}\delta_{\frac 1L\vec\gamma}$$
on $\CC(\Sigma)^k$. We are interested in their behavior when $L\to\infty$. Theorem \ref{main} asserts indeed that the measures $m(\vec\gamma^o,L)$ converge when $L\to\infty$ to a measure of some specific form. Our goal in this section is to prove that this is the case if we allow ourselves to pass to subsequences:

\begin{prop}\label{prop main}
Let $\vec\gamma^o=(\gamma_i)_i\in\CC(\Sigma)^k$ be a $k$-multicurve and consider any sequence $L_s\to\infty$. There is a subsequence $(L_{s_j})_j$ such that the limit 
$$\FN=\lim_{j\to\infty}m(\vec\gamma^o,L_{s_j})$$
exists in the weak-*-topology. Moreover, the limiting measure $\FN$ is the push-forward of $\FQ\otimes\FM_{\Thu}$ under the map
\begin{equation}\label{eq Delta}
\BD:\Delta_k\times\CM\CL(\Sigma)\to\CC(\Sigma)^k,\ \ ((a_i),\lambda)\mapsto(a_1\lambda,\dots,a_k\lambda)
\end{equation}
for some measure $\FQ$ on the standard $k$-simplex $\Delta_k\subset\BR_{\ge 0}^k$.
\end{prop}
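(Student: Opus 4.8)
The plan is to establish precompactness of the family $(m(\vec\gamma^o,L))_L$ first, then identify the support of any sublimit, and finally use the measure convergence theorem from \cite{ES book} together with ergodicity of the Thurston measure to disintegrate the sublimit.

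First I would prove precompactness. Applying the mapping class group equivariant map $\pi:\CC(\Sigma)^k\to\CC(\Sigma)$, $\pi(\vec\mu)=\sum_i\mu_i$, one has $\pi_*m(\vec\gamma^o,L)=m^L_{\pi(\vec\gamma^o)}$, which converges by \eqref{eq measure convergence}; in particular the family $\pi_*m(\vec\gamma^o,L)$ is bounded on compact sets. Since all the components $\gamma_i$ of $\vec\gamma^o$, and hence all currents in the orbit, are supported by geodesics in the preimage of a fixed compact $K\subset\Sigma\setminus\D\Sigma$ (property {\bf (5)}), the measures $m(\vec\gamma^o,L)$ are all supported on $\CC_K(\Sigma)^k$, and on this closed set $\pi$ is proper. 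Boundedness on compacta of the pushforwards therefore pulls back to boundedness on compacta of the $m(\vec\gamma^o,L)$ themselves, so by the usual weak-$*$ compactness of uniformly locally bounded Radon measures we can extract a convergent subsequence $m(\vec\gamma^o,L_{s_j})\to\FN$.

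Next I would pin down the support of $\FN$. The key observation is that $\pi_*\FN=\frac{\FC(\pi(\vec\gamma^o))}{\FB_{g,r}}\FM_{\Thu}$ is supported on $\CM\CL(\Sigma)\subset\CC(\Sigma)$, so $\FN$ is supported on $\pi^{-1}(\CM\CL(\Sigma))=\{\vec\mu:\pi(\vec\mu)\in\CM\CL(\Sigma)\}$. By Lemma \ref{lem1}, this forces every component $\mu_i$ to be a measured lamination with $\iota(\mu_i,\mu_j)=0$ for all $i,j$. Moreover, by Masur's theorem (property {\bf (2)}) the Thurston measure, and hence $\pi_*\FN$, is concentrated on uniquely ergodic laminations; so $\FN$ is concentrated on $\pi^{-1}(\{\text{uniquely ergodic }\lambda\})$, and Lemma \ref{lem2} shows that every such $\vec\mu$ is of the form $(a_1\lambda,\dots,a_k\lambda)$ with $(a_i)_i\in\Delta_k$. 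In other words $\FN$ is supported on the image of the injective map $\BD$ of \eqref{eq Delta}, and on the relevant conull set $\BD$ is a Borel isomorphism onto its image, so $\FN=\BD_*\widetilde\FN$ for a unique Radon measure $\widetilde\FN$ on $\Delta_k\times\CM\CL(\Sigma)$.

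It remains to show $\widetilde\FN$ has the product form $\FQ\otimes\FM_{\Thu}$. The second coordinate of $\BD$ is $\lambda$ itself, which under $\pi$ is recovered (since $\sum a_i=1$) as $\pi\circ\BD((a_i),\lambda)=\lambda$; hence the pushforward of $\widetilde\FN$ to the $\CM\CL(\Sigma)$ factor is exactly $\pi_*\FN=\frac{\FC(\pi(\vec\gamma^o))}{\FB_{g,r}}\FM_{\Thu}$, a scalar multiple of $\FM_{\Thu}$. Disintegrating $\widetilde\FN$ over this factor gives a family of probability measures $\lambda\mapsto\FQ_\lambda$ on $\Delta_k$; I then need $\FQ_\lambda$ to be independent of $\lambda$ $\FM_{\Thu}$-a.e. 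This is where the mapping class group comes in: $m(\vec\gamma^o,L)$ is $\Map(\Sigma)$-invariant (the orbit sum is, and the action is by homeomorphisms), $\BD$ is equivariant for the diagonal action on $\CC(\Sigma)^k$ and the action on the $\CM\CL(\Sigma)$ factor alone (the $\Delta_k$ factor being fixed), so $\widetilde\FN$ is invariant under $\Id_{\Delta_k}\times\Map(\Sigma)$. By uniqueness of disintegrations this means $\FQ_{\phi\lambda}=\FQ_\lambda$ for all $\phi$ and $\FM_{\Thu}$-a.e.\ $\lambda$; since $\FM_{\Thu}$ is $\Map(\Sigma)$-ergodic (Masur, property {\bf (2)}), the measurable map $\lambda\mapsto\FQ_\lambda$ is a.e.\ constant, equal to some fixed probability measure which, after absorbing the constant $\frac{\FC(\pi(\vec\gamma^o))}{\FB_{g,r}}$, gives the desired measure $\FQ$ on $\Delta_k$ with $\FN=\BD_*(\FQ\otimes\FM_{\Thu})$.

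The main obstacle I expect is the disintegration and ergodicity step: one has to be careful that $\BD$ is a genuine Borel isomorphism onto a conull portion of its image (using unique ergodicity to avoid the non-uniquely-ergodic locus), that the disintegration is well defined and essentially unique as a map into probability measures on $\Delta_k$ with the weak-$*$ Borel structure, and that $\Map(\Sigma)$-invariance of $\widetilde\FN$ really transfers — via uniqueness of disintegration — to $\Map(\Sigma)$-invariance of the measurable field $\lambda\mapsto\FQ_\lambda$, so that ergodicity of $\FM_{\Thu}$ applies. The precompactness and support steps are comparatively routine given \eqref{eq measure convergence} and Lemmas \ref{lem1}--\ref{lem2}.
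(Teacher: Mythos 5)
Your proposal is correct and follows essentially the same route as the paper: precompactness by pushing forward under $\pi$ (the paper bounds the relevant counts via \cite[Theorem 5.19]{ES book} and the proper length function $\hat\ell$, you use properness of $\pi$ on $\CC_K(\Sigma)^k$ together with \eqref{eq measure convergence} --- equivalent), identification of the support of any sublimit with the image of $\BD$ via $\pi_*\FN$ being a multiple of $\FM_{\Thu}$, Masur's theorem and Lemmas \ref{lem1}--\ref{lem2}, and finally mapping class group invariance plus ergodicity of $\FM_{\Thu}$ to obtain the product form. The only cosmetic differences are that you run the last step through disintegration and essential uniqueness of conditional measures, whereas the paper avoids that machinery by restricting $\hat\FN$ to $U\times\CM\CL(\Sigma)$ for each Borel $U\subset\Delta_k$ and applying ergodicity to the pushforward set by set, and that $\pi_*m(\vec\gamma^o,L)$ equals $c_0\cdot m^L_{\pi(\vec\gamma^o)}$ for the (harmless) multiplicity $c_0$ of $\pi$ restricted to the orbit, rather than $m^L_{\pi(\vec\gamma^o)}$ itself.
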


Starting with the proof, fix a compact set $K\subset\Sigma\setminus\D\Sigma$ such that $\CC_K(\Sigma)$ contains the mapping class group orbit $\Map(\Sigma)\cdot\gamma_i$ of every component of $\vec\gamma$, and note that $m(\vec\gamma^o,L)$ is supported by the subset $\CC_K(\Sigma)^k=\CC_K(\Sigma)\times\dots\times\CC_K(\Sigma)$ of $\CC(\Sigma)^k$. With this notation in place we prove next that the measures $m(\vec\gamma^o,L)$ form a relatively compact family. 

\begin{lem}\label{lem precompact}
Every sequence $L_s\to\infty$ has a subsequence $(L_{s_j})_j$ such that the limit
$$\FN=\lim_j m(\vec\gamma^o,L_{s_j})$$
exists in the weak-*-topology on the space of Radon measures on $\CC_K(\Sigma)^k$. Moreover, the measure $\FN$ is mapping class group invariant.
\end{lem}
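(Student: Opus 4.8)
The plan is to establish relative compactness of the family $(m(\vec\gamma^o,L))_L$ in the space of Radon measures on $\CC_K(\Sigma)^k$ by producing a uniform bound on the mass assigned to compact sets, and then to upgrade a weak-* sublimit to a mapping class group invariant measure using the equivariance of the defining construction together with the scaling relations. Since the space of Radon measures on the locally compact second countable space $\CC_K(\Sigma)^k$ is metrizable in the weak-* topology on bounded sets, it suffices to check that $\sup_L m(\vec\gamma^o,L)(A)<\infty$ for every compact $A\subset\CC_K(\Sigma)^k$.

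First I would reduce the mass bound on $A$ to a curve-counting estimate. Pick a filling current $\sigma^o$ (any filling multicurve works here). Because $\sigma^o$ is filling and $A$ is compact, there is a constant $C_A$ such that every $\vec\mu=(\mu_i)_i\in A$ satisfies $\sum_i\iota(\sigma^o,\mu_i)\le C_A$, i.e. $\iota(\sigma^o,\pi(\vec\mu))\le C_A$, using property \textbf{(4)} of filling currents and continuity/bilinearity of $\iota$. Hence $\frac 1L\vec\gamma\in A$ forces $\iota(\sigma^o,\pi(\vec\gamma))\le C_A L$, and the number of such $\vec\gamma\in\Map(\Sigma)\cdot\vec\gamma^o$ is at most the number of $\gamma\in\Map(\Sigma)\cdot\pi(\vec\gamma^o)$ with $\iota(\sigma^o,\gamma)\le C_A L$ — note $\pi$ is injective on the orbit up to the finite-to-one ambiguity coming from reorderings, which only changes things by a bounded multiplicative constant. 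By the counting statement \eqref{eq counting} applied to the multicurve $\pi(\vec\gamma^o)$, this count is $O(L^{6g-6+2r})$, so $m(\vec\gamma^o,L)(A)\le\frac{1}{L^{6g-6+2r}}\cdot O(L^{6g-6+2r})=O(1)$ uniformly in $L$. This gives relative compactness, hence the desired subsequence and sublimit $\FN$.

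For the invariance, fix $\phi\in\Map(\Sigma)$. Since $\phi$ acts diagonally and $\Map(\Sigma)\cdot\vec\gamma^o$ is $\phi$-invariant, the change of variables $\vec\gamma\mapsto\phi^{-1}(\vec\gamma)$ in the sum defining $m(\vec\gamma^o,L)$ shows $\phi_*m(\vec\gamma^o,L)=m(\vec\gamma^o,L)$ exactly — the reindexing is a bijection of the orbit and $\delta_{\frac 1L\phi(\vec\gamma)}=\phi_*\delta_{\frac 1L\vec\gamma}$. Passing to the limit along $(L_{s_j})_j$ and using that $\phi_*$ is continuous on Radon measures (as $\phi$ is a homeomorphism of $\CC(\Sigma)^k$ preserving $\CC_K(\Sigma)^k$ for a possibly enlarged compact $K$, by property \textbf{(5)}) gives $\phi_*\FN=\FN$.

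The main obstacle is the uniform mass bound, specifically making rigorous the passage from ``$\frac 1L\vec\gamma\in A$'' to a genuine curve count to which \eqref{eq counting} applies: one must control the finite multiplicity of $\pi$ on the orbit $\Map(\Sigma)\cdot\vec\gamma^o$ (which is bounded by $k!$, coming from permuting equal components) and confirm that the compactness in property \textbf{(4)} delivers a $C_A$ uniform over all of $A$, not just pointwise. Both are routine but are the only non-formal points; everything else (metrizability of the weak-* topology, the change-of-variables for invariance, continuity of pushforward) is standard.
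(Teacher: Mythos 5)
Your argument is correct and follows essentially the same route as the paper: a uniform bound on $m(\vec\gamma^o,L)(A)$ for compact $A$, obtained by pushing the count down to the orbit of $\pi(\vec\gamma^o)$ via the finite-to-one map $\pi$ and invoking a known $O(L^{6g-6+2r})$ counting bound, plus exact $\Map(\Sigma)$-invariance of each $m(\vec\gamma^o,L)$ passing to the weak-* limit; the paper uses the hyperbolic length function and the upper bound of \cite[Theorem 5.19]{ES book} where you use $\iota(\sigma^o,\cdot)$ and \eqref{eq counting}, which is an immaterial difference. One cosmetic caveat: when the components of $\vec\gamma^o$ are themselves weighted multicurves, the multiplicity of $\pi$ on the orbit is finite but need not be bounded by $k!$ (it is governed by the ordered decompositions of $\pi(\vec\gamma^o)$), though finiteness is all your argument actually needs.
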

\begin{proof}
Note first that since all the measures $m(\gamma^o,L)$ are mapping class group invariant, the same will be true for any putative limit $\FN$. 

Now, since $\CC_K(\Sigma)$ is locally compact, so is the product $\CC_K(\Sigma)^k$. It follows that to prove that our sequence of measures $(m(\vec\gamma^o,L_s))_s$ has a convergent subsequence, it suffices to show that the sequence of real numbers $(m(\vec\gamma^o,L_s)(W))_s$ is bounded for every compact set $W$. Note now that the restriction to $\CC_K(\Sigma)$ of the length function $\ell:\CC(\Sigma)\to\BR_{\ge 0}$ associated to some hyperbolic metric on $\Sigma$ is proper, and hence so is also the function
$$\hat\ell:\CC_K(\Sigma)^k\to\BR_{\ge 0},\ \ \hat\ell(\vec\mu)=\sum_i\ell(\mu_i) = \ell(\pi(\vec\mu)),$$
and fix some $R$ with $\hat\ell(W)\subset[0,R]$. The restriction of $\pi:\CC(\Sigma)^k\to\CC(\Sigma)$ to $\Map(\Sigma)\cdot\vec\gamma^o$ is $c_0$-to-1 for some finite $c_0$ and it follows that
\begin{align*}
m(\vec\gamma^o,L)(W)
&\le m(\vec\gamma^o,L)\big(\{(\vec\mu)\in\CC_K(\Sigma)^k\text{ with }\hat\ell(\vec\mu)\le R\}\big)\\
&=\frac 1{L^{6g-6+2r}}\left\vert\left\{\vec\gamma\in\Map(\Sigma)\cdot\vec\gamma^o\text{ with }\hat\ell\left(\frac 1L\vec\gamma\right)\le R\right\}\right\vert\\
&=\frac {c_0}{L^{6g-6+2r}}\cdot \left\vert\left\{\pi(\vec\gamma)\in\Map(\Sigma)\cdot\pi(\vec\gamma^o)\text{ with }\ell\left(\frac 1L\pi(\vec\gamma)\right)\le R\right\}\right\vert\\
&=\frac {c_0}{L^{6g-6+2r}}\cdot \left\vert\left\{\pi(\vec\gamma)\in\Map(\Sigma)\cdot\pi(\vec\gamma^o)\text{ with }\ell(\pi(\vec\gamma))\le R\cdot L\right\}\right\vert.
\end{align*}
Now we get from \cite[Theorem 5.19]{ES book} that there is some $A>0$, depending on the multicurve $\pi(\vec\gamma^o)$ and on $R$, with 
\begin{equation}\label{eq uniform bound}
\vert\{\pi(\vec\gamma)\in\Map(\Sigma)\cdot\pi(\vec\gamma^o)\text{ with }\ell(\pi(\vec\gamma))\le R\cdot L\}\vert\le A\cdot (R\cdot L)^{6g-6+2r}
\end{equation}
for all large $L$. Having proved that $m(\vec\gamma^o,L)(W)\le c_0\cdot A\cdot R^{6g-6+2r}$ for all sufficiently large $L$, we are done.
\end{proof}

From now on let $\FN$ be as in Lemma \ref{lem precompact}. Our next goal is to show that $\FN$ is supported by the subset of $k$-multicurrents representing a measured lamination.

\begin{lem}\label{lem support ml}
The limiting measure $\FN$ is supported by $\pi^{-1}(\CM\CL(\Sigma))\subset\CC(\Sigma)^k$.
\end{lem}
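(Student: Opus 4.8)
The plan is to show that $\FN$ gives full mass to the closed set $\pi^{-1}(\CM\CL(\Sigma))$ by exhibiting, for each $R$, an upper bound on the $\FN$-mass of the region where the self-intersection of the represented current is bounded below, and showing this bound tends to $0$. Concretely, fix a compact $W\subset\CC_K(\Sigma)^k$ and let me estimate $\FN\big(W\cap\{\vec\mu:\iota(\pi(\vec\mu),\pi(\vec\mu))\ge\epsilon\}\big)$. The key structural input is the measure convergence theorem \cite[Theorem 8.1]{ES book} applied to the multicurve $\pi(\vec\gamma^o)$ together with Lemma \ref{lem precompact}: since $\pi$ is $c_0$-to-$1$ on the orbit $\Map(\Sigma)\cdot\vec\gamma^o$, we have $\pi_*m(\vec\gamma^o,L)=c_0\cdot m^L_{\pi(\vec\gamma^o)}$, and the latter converges to $\frac{c_0\FC(\pi(\vec\gamma^o))}{\FB_{g,r}}\FM_{\Thu}$. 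Hence any sublimit satisfies $\pi_*\FN=\frac{c_0\FC(\pi(\vec\gamma^o))}{\FB_{g,r}}\FM_{\Thu}$, a measure supported on $\CM\CL(\Sigma)$, i.e. on the set of currents with vanishing self-intersection.

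Now I would run the following argument. The function $F:\CC_K(\Sigma)^k\to\BR_{\ge 0}$, $F(\vec\mu)=\iota(\pi(\vec\mu),\pi(\vec\mu))$, is continuous by bilinearity and continuity of $\iota$. For $\epsilon>0$ the set $A_\epsilon=\{\vec\mu\in\CC_K(\Sigma)^k:F(\vec\mu)\ge\epsilon\}$ is closed, and $A_\epsilon\cap W$ is compact. Since $\pi_*\FN$ is supported on $\{\iota(\nu,\nu)=0\}$, we get $(\pi_*\FN)(\{\nu:\iota(\nu,\nu)\ge\epsilon\})=0$, hence $\FN(A_\epsilon)=0$ for every $\epsilon>0$; letting $\epsilon\to 0$ along a countable sequence gives $\FN(\{\vec\mu:F(\vec\mu)>0\})=0$, i.e. $\FN$ is concentrated on $\{\vec\mu:\iota(\pi(\vec\mu),\pi(\vec\mu))=0\}=\pi^{-1}(\CM\CL(\Sigma))$, using property (3) of the intersection form (measured laminations are exactly the currents of vanishing self-intersection). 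Finally, by Lemma \ref{lem1} this support condition is equivalent to saying each component $\mu_i$ is a measured lamination with $\iota(\mu_i,\mu_j)=0$, which is a useful reformulation for what follows.

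The one point requiring genuine care—and what I expect to be the main obstacle—is the passage $\pi_*\FN=\lim_L\pi_*m(\vec\gamma^o,L_{s_j})$. Push-forward is continuous for weak-* convergence only when the map is proper; here $\pi:\CC_K(\Sigma)^k\to\CC(\Sigma)$ need not be proper on all of $\CC_K(\Sigma)^k$ because $\CC_K(\Sigma)$ itself is not compact. However, $\pi$ restricted to $\CC_K(\Sigma)^k$ does have the property that preimages of $\CC_{K'}(\Sigma)$-type sets are controlled, and more to the point one can test against compactly supported continuous functions directly: for $f\in C_c(\CC(\Sigma))$ the composition $f\circ\pi$ need not have compact support in $\CC_K(\Sigma)^k$, so one should instead use that the proper function $\hat\ell$ from Lemma \ref{lem precompact} satisfies $\hat\ell=\ell\circ\pi$, giving uniform tightness of the masses $m(\vec\gamma^o,L)(\hat\ell^{-1}[0,R])$ exactly as in that lemma. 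This tightness lets one justify $\pi_*\FN=c_0\cdot\lim_L m^L_{\pi(\vec\gamma^o)}$ as an identity of Radon measures. Alternatively—and perhaps more cleanly—one avoids push-forwards altogether: estimate $m(\vec\gamma^o,L)(A_\epsilon\cap W)$ directly by noting $\vec\gamma\in A_\epsilon$ forces $\iota(\pi(\tfrac1L\vec\gamma),\pi(\tfrac1L\vec\gamma))\ge\epsilon$, i.e. $\iota(\pi(\vec\gamma),\pi(\vec\gamma))\ge\epsilon L^2$, while $\pi(\vec\gamma)$ is a multicurve in a fixed mapping class group orbit with $\ell(\pi(\vec\gamma))\le RL$; since on a single orbit $\iota(\nu,\nu)$ is comparable to $\ell(\nu)^2$ up to a multiplicative constant only in the limit... this needs the sharper statement that the number of $\nu$ in the orbit with $\ell(\nu)\le RL$ and $\iota(\nu,\nu)\ge\epsilon L^2$ is $o(L^{6g-6+2r})$, which again follows from the measure convergence theorem since the limiting measure $\FM_{\Thu}$ charges only $\{\iota(\nu,\nu)=0\}$. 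Either route works; I would present the push-forward version with the tightness justification made explicit, as it is shortest.
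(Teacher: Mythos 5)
Your argument is correct, but it takes a different route from the paper's. You derive the support statement from the identification $\pi_*\FN=c_0\cdot\lim_L m^L_{\pi(\vec\gamma^o)}=\tfrac{c_0\,\FC(\pi(\vec\gamma^o))}{\FB_{g,r}}\FM_{\Thu}$ together with the fact that $\FM_{\Thu}$ charges only currents of vanishing self-intersection; since $\FN(\pi^{-1}(B))=(\pi_*\FN)(B)$ for Borel $B$, the conclusion is immediate. This is legitimate, and your worry about the push-forward step is resolved exactly as you suggest: all the measures live on $\CC_K(\Sigma)^k$, where $\hat\ell=\ell\circ\pi$ is proper, so for $f$ continuous with compact support the set $\pi^{-1}(\supp f)\cap\CC_K(\Sigma)^k$ is compact and $f\circ\pi$ is a legitimate test function; in fact the paper performs this very push-forward step (equation \eqref{eq this cat is a pain}) in the \emph{next} lemma, Lemma \ref{lem support Delta}, so you are essentially front-loading that ingredient and obtaining Lemma \ref{lem support ml} as a corollary of it. The paper's own proof of this lemma is more elementary: it integrates $\iota(\pi(\vec\mu),\pi(\vec\mu))$ over the length-bounded sets $\hat\ell^{-1}[0,R)$ against the approximating measures, uses that the self-intersection number is \emph{constant} on the orbit $\Map(\Sigma)\cdot\vec\gamma^o$ (so after rescaling by $\tfrac1L$ it decays like $L^{-2}$), and only needs the crude polynomial counting bound \eqref{eq uniform bound} already used in Lemma \ref{lem precompact}, not the full measure convergence theorem. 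Your second, ``alternative'' sketch is close to this: note that constancy of $\iota(\pi(\vec\gamma),\pi(\vec\gamma))$ on the orbit makes the set $\{\ell(\pi(\vec\gamma))\le RL,\ \iota(\pi(\vec\gamma),\pi(\vec\gamma))\ge\epsilon L^2\}$ empty for large $L$, so no appeal to the limit measure is needed there at all. In short: your route buys brevity at the cost of invoking the stronger convergence input earlier; the paper's route keeps this lemma self-contained and cheap, deferring the identification of $\pi_*\FN$ to where it is genuinely needed (unique ergodicity via Masur, in Lemma \ref{lem support Delta}).
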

\begin{proof}
With the same notation as in the proof of Lemma \ref{lem precompact} we let $\hat\ell(\vec\mu)=\sum\ell(\mu_i)$, let $c_0$ and $A$ satisfy \eqref{eq uniform bound}, and stress that there is some $K$ such that all the measures $m(\vec\gamma,L)$, and hence also the sublimit $\FN$, are supported by $\CC_K(\Sigma)^k$. Now, noting that for all $\vec\gamma\in\Map(\Sigma)\cdot\vec\gamma^o$ we have 
$$\iota(\pi(\vec\gamma),\pi(\vec\gamma))=\iota(\pi(\vec\gamma^o),\pi(\vec\gamma^o))$$
we get for all fixed but otherwise arbitrary $R>0$ that
\begin{align*}
\int_{\hat\ell^{-1}[0,R)}&\iota(\pi(\vec\mu),\pi(\vec\mu))\ d\FN(\mu)\le\\
&\le \limsup_{L\to\infty}\int_{\hat\ell^{-1}[0,R]}\iota(\pi(\vec\mu),\pi(\vec\mu))\ dm(\vec\gamma^o,L)(\mu)\\
&=\limsup_{L\to\infty}\frac 1{L^{6g-6+2r}}\sum_{\tiny{\begin{array}{l}\vec\gamma\in\Map(\Sigma)\cdot\vec\gamma^o\\ \hat\ell(\gamma)\le RL\end{array}}}\iota\left(\pi\left(\frac 1L\vec\gamma\right),\pi\left(\frac 1L\vec\gamma\right)\right)\\
&=\limsup_{L\to\infty}\left(\frac{\vert\{\vec\gamma\in\Map(\Sigma)\cdot\vec\gamma^o\text{ with }\hat\ell(\vec\gamma)\le RL\}\vert}{L^{6g-6+2r}}\cdot\frac{\iota(\pi(\vec\gamma^o),\pi(\vec\gamma^o))}{L^2}\right)\\
&\le \limsup_{L\to\infty} \left(c_0\cdot A\cdot R^{6g-6+2r}\cdot\frac{\iota(\pi(\vec\gamma^o),\pi(\vec\gamma^o))}{L^2}\right)=0.
\end{align*}
Having proved that
$$\int_{\hat\ell^{-1}[0,R)}\iota(\pi(\vec\mu),\pi(\vec\mu))\ d\FN(\mu)=0$$
for all $R$, we deduce that the sublimit $\FN$ is supported by the set of those $\vec\mu\in\CC^k(\Sigma)$ with $\iota(\pi(\vec\mu),\pi(\vec\mu))=0$. Our claim follows from Lemma \ref{lem1}.
\end{proof}

Our next goal is to refine Lemma \ref{lem support ml}, showing that $\FN$ is indeed supported by the image of the map \eqref{eq Delta}.

\begin{lem}\label{lem support Delta}
The limiting measure $\FN$ is supported by $\BD(\Delta_k\times\CM\CL(\Sigma))$ where $\BD$ is the map \eqref{eq Delta}.
\end{lem}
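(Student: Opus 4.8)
The plan is to upgrade Lemma \ref{lem support ml} using the ergodicity of the Thurston measure together with Lemma \ref{lem2}. By Lemma \ref{lem support ml}, $\FN$ is supported on $\pi^{-1}(\CM\CL(\Sigma))$, and by Lemma \ref{lem1} every $\vec\mu$ in this set has the form $(\mu_1,\dots,\mu_k)$ with each $\mu_i\in\CM\CL(\Sigma)$ and $\iota(\mu_i,\mu_j)=0$ for all $i,j$; in particular all the $\mu_i$ are supported on a common lamination, namely the support of $\pi(\vec\mu)=\sum_i\mu_i$. Lemma \ref{lem2} says that whenever $\pi(\vec\mu)$ is moreover \emph{uniquely ergodic}, then $\vec\mu=(a_1\lambda,\dots,a_k\lambda)$ for some $(a_i)\in\Delta_k$ and $\lambda=\pi(\vec\mu)$, i.e. $\vec\mu\in\BD(\Delta_k\times\CM\CL(\Sigma))$. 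So it suffices to show that $\FN$ gives no mass to the set of $\vec\mu\in\pi^{-1}(\CM\CL(\Sigma))$ for which $\pi(\vec\mu)$ is \emph{not} uniquely ergodic.

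First I would reduce to a statement about the pushforward $\pi_*\FN$ on $\CC(\Sigma)$: the "bad" set $\pi^{-1}(\CM\CL(\Sigma))\setminus\BD(\Delta_k\times\CM\CL(\Sigma))$ is contained in $\pi^{-1}(\CN)$, where $\CN\subset\CM\CL(\Sigma)$ is the set of measured laminations that are not uniquely ergodic. So it is enough to prove $\pi_*\FN(\CN)=0$. Now $\FN$ is $\Map(\Sigma)$-invariant by Lemma \ref{lem precompact}, and $\pi$ is $\Map(\Sigma)$-equivariant, so $\pi_*\FN$ is a $\Map(\Sigma)$-invariant Radon measure on $\CC(\Sigma)$, supported (by Lemma \ref{lem support ml}) on $\CM\CL(\Sigma)$, and it is finite on compacts, in particular on the compact set $K'$ containing $\Map(\Sigma)\cdot\pi(\vec\gamma^o)$ where the orbit of $\pi(\vec\gamma^o)$ lives — hence finite on the compact pieces $\{\lambda\in\CM\CL(\Sigma): \ell(\lambda)\le R\}$. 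The key input is Masur's theorem, quoted in item \textbf{(2)} of Section \ref{sec preli}: $\FM_{\Thu}$ is ergodic for the $\Map(\Sigma)$-action on $\CM\CL(\Sigma)$, and the uniquely ergodic laminations that intersect every curve form a set of full Thurston measure, so $\CN$ has zero Thurston measure. Invariance plus ergodicity of $\FM_{\Thu}$ will force $\pi_*\FN$ (restricted to $\CM\CL$, and made finite by intersecting with $\ell\le R$) to be proportional to $\FM_{\Thu}$, and therefore to vanish on the $\FM_{\Thu}$-null invariant set $\CN$.

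The one technical point to be careful about — and the part I expect to be the main obstacle — is making the "ergodicity forces proportionality" step rigorous for a measure that is only Radon, not finite. The clean way is: $\CN$ is $\Map(\Sigma)$-invariant and $\FM_{\Thu}(\CN)=0$; one wants to conclude $\nu(\CN)=0$ for every $\Map(\Sigma)$-invariant Radon measure $\nu$ on $\CM\CL(\Sigma)$ that is absolutely continuous with respect to $\FM_{\Thu}$ in the sense that $\FM_{\Thu}(E)=0\Rightarrow\nu(E)=0$ — but that absolute continuity is itself what needs justification, and it is really the content of the disintegration argument to come in the next section. To avoid circularity I would instead argue directly: restrict $\FN$ to $\hat\ell^{-1}[0,R)$ to get a finite invariant measure $\FN_R$; then $\pi_*\FN_R$ is a finite invariant measure on $\CM\CL(\Sigma)\cap\{\ell\le R\}$, and a standard ergodic-decomposition / Poincaré-recurrence type argument (or simply: any finite invariant measure on a space with a uniquely ergodic conservative action... ) — actually the cleanest route, and the one I would take, is to observe that the non-uniquely-ergodic locus $\CN$ can be further covered by a countable union of sets each of which is carried by a proper sublamination, and to show by a dimension/scaling argument that $\FN$ already charges each of these with zero mass, bounding the relevant counting function below the critical exponent $6g-6+2r$ exactly as in the proof of Lemma \ref{lem support ml}. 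Letting $R\to\infty$ then gives $\pi_*\FN(\CN)=0$, hence $\FN$ is supported on $\BD(\Delta_k\times\CM\CL(\Sigma))$, as claimed.
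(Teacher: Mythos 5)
Your high-level strategy is the right one: reduce to showing that $\pi(\vec\mu)$ is uniquely ergodic for $\FN$-almost every $\vec\mu$, and then invoke Lemma \ref{lem2}. But the core of the lemma is precisely the claim $\pi_*\FN(\CN)=0$, and none of the three routes you sketch actually establishes it. The ergodicity route you correctly abandon, since you have no absolute continuity of $\pi_*\FN$ with respect to $\FM_{\Thu}$ at this stage. The replacement you propose next does not work as stated: the set $\hat\ell^{-1}[0,R)$ is not $\Map(\Sigma)$-invariant, so restricting $\FN$ (or $\pi_*\FN$) to it destroys invariance, and the ``finite invariant measure'' you want to feed into an ergodic-decomposition argument simply is not invariant. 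Finally, the route you say you would take --- covering $\CN$ by countably many sets carried by proper sublaminations and bounding the relevant counting function below the exponent $6g-6+2r$ by a ``dimension/scaling argument exactly as in the proof of Lemma \ref{lem support ml}'' --- is asserted, not proved. The mechanism in Lemma \ref{lem support ml} is special: the self-intersection number $\iota(\pi(\vec\gamma),\pi(\vec\gamma))$ is constant on the orbit, so after rescaling by $1/L$ it decays like $L^{-2}$; there is no analogous conserved quantity that penalizes orbit points accumulating on the non-uniquely-ergodic locus, and a subcritical counting bound for such points is a genuinely nontrivial statement that your sketch does not supply.

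The missing observation, which makes the lemma short and non-circular, is that $\pi_*\FN$ can be computed outright. Since $\pi$ restricted to $\Map(\Sigma)\cdot\vec\gamma^o$ is $c_0$-to-$1$ onto the orbit of $\eta^o=\pi(\vec\gamma^o)$, one has $\pi_*\big(m(\vec\gamma^o,L)\big)=c_0\cdot m^L_{\eta^o}$ in the notation of \eqref{eq counting measure}, and the measure convergence theorem quoted from \cite{ES book} (equation \eqref{eq measure convergence}), which concerns the single multicurve $\eta^o$ and is available as background, gives
$$\pi_*\FN=c_0\cdot\frac{\FC(\eta^o)}{\FB_{g,r}}\cdot\FM_{\Thu}.$$
This is an exact identification, not just absolute continuity, and there is no circularity with the disintegration to come. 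Masur's theorem that the uniquely ergodic laminations have full Thurston measure then gives $\pi_*\FN(\CN)=0$, i.e.\ $\pi^{-1}(\CE)$ has full $\FN$-measure, and Lemma \ref{lem2} places $\pi^{-1}(\CE)$ inside $\BD(\Delta_k\times\CM\CL(\Sigma))$. You should replace your three tentative arguments by this single computation.
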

\begin{proof}
As in the proof of Lemma \ref{lem precompact} let $c_0$ be such that the restriction of $\pi:\CC(\Sigma)^k\to\CC(\Sigma)$ to the orbit $\Map(\Sigma)\cdot\vec\gamma^o$ is $c_0$-to-1, set $\eta^o=\pi(\vec\gamma^o)$ and note that 
$$\pi_*(m(\vec\gamma^o,L))=\frac {c_0}{L^{6g-6+2r}}\sum_{\eta\in\Map(\Sigma)\cdot\eta^o}\delta_{\frac 1L\eta}.$$
With the same notation as in \eqref{eq counting measure} this reads as 
$$\pi_*(m(\vec\gamma^o,L))=c_0\cdot m^L_{\eta^o}$$
Now, we get from the continuity of $\pi$ and \eqref{eq measure convergence} that
\begin{equation}\label{eq this cat is a pain}
\pi_*(\FN)=c_0\cdot\frac{c(\eta^o)}{\FB_{g,r}}\cdot\FM_{\Thu}.
\end{equation}
As already mentioned, it is a result of Masur \cite{Masur almost all uq} that the set $\CE$ of uniquely ergodic measured laminations has full Thurston measure. We get thus from \eqref{eq this cat is a pain} that $\pi^{-1}(\CE)$ has full $\FN$-measure. Since Lemma \ref{lem2} implies that $\pi^{-1}(\CE)$ is contained in the image of \eqref{eq Delta}, the claim follows.
\end{proof} 

We are now ready to finish the proof of Proposition \ref{prop main}:

\begin{proof}[Proof of Proposition \ref{prop main}]
Let us recap what we know so far. We get from Lemma \ref{lem precompact} that the sequence $(m(\vec\gamma^o,L_s))_s$ is precompact and that the accumulation points are mapping class group invariant. Denoting by
$$\FN=\lim_im(\vec\gamma^o,L_{s_i})$$
the limit of some convergent subsequence $(m(\vec\gamma^o,L_{s_i}))_i$ we get from Lemma \ref{lem support Delta} that $\FN$ is supported by the image of the map
$$\BD:\Delta_k\times\CM\CL(\Sigma)\to\CC(\Sigma)^k,\ \ ((a_i),\lambda)\mapsto(a_1\lambda,\dots,a_k\lambda).$$
Since this map is a homeomorphism onto its image, we can think of the limiting measure $\FN$ as being the push-forward under $\BD$ of a measure $\hat\FN$ on $\Delta_k\times\CM\CL(\Sigma)$. What is left to do is to prove that $\hat\FN$ is the product of a measure on $\Delta_k$ and the Thurston measure. 

Before doing that note however that the map $\BD$ is mapping class group equivariant, where the action of $\Map(\Sigma)$ on $\Delta_k\times\CM\CL(\Sigma)$ is trivial on the first factor and the standard one on the second. Invariance of the measure $\FN$ and equivariance of the embedding $\BD$ imply that also $\hat\FN$ is mapping class group invariant. Note also that the composition of $\BD$ with the projection map $\pi$ has a very simple form:
$$(\pi\circ\BD)((a_i),\lambda)=\lambda.$$

Coming now to the meat of the proof, suppose that $U\subset\Delta_k$ is a Borel set and consider the restriction $\hat\FN_U$ of $\hat\FN$ to $U\times\CM\CL$. The measure $\hat\FN_U$ is mapping class group invariant because both $U\times\CM\CL$ and $\hat\FN$ are. This implies that also $(\pi\circ\BD)_*(\hat\FN_U)$ is mapping class group invariant. The push-forward $(\pi\circ\BD)_*(\hat\FN_U)$ is evidently absolutely continuous with respect to $(\pi\circ \BD)_*(\hat\FN)=\pi_*(\FN)$ and hence with respect to $\FM_{\Thu}$ because of \eqref{eq this cat is a pain}. It follows thus from the ergodicity of the Thurston measure \cite{Masur ergodic} that $(\pi\circ\BD)_*(\hat\FN_U)$ is a constant multiple of $\FM_{\Thu}$. Let $\FQ(U)\in\BR_{\ge 0}$ be the unique number with
\begin{equation}\label{eq define measure}
(\pi\circ\BD)_*(\FN_U)=\FQ(U)\cdot\FM_{\Thu}
\end{equation}
It is evident that $\FQ(\emptyset)=0$ and that the map $\FQ$ from the Borel $\sigma$-algebra of $\Delta_k$ to $\BR_{\ge 0}$ is $\sigma$-additive. In other words, $\FQ$ is a measure. It remains to check that $\hat\FN$ actually agrees with $\FQ\otimes\FM_{\Thu}$. To see that this is the case note that for any $V\subset\CM\CL$ we have $(U\times\CM\CL)\cap(\pi\circ\BD)^{-1}(V)=U\times V$. This means that we have
\begin{align*}
\hat\FN(U\times V)&\stackrel{\text{definition}}=\hat\FN_U(U\times V)=(\pi\circ\BD)_*(\FN_U)(V)\\
&\stackrel{\eqref{eq define measure}}=\FQ(U)\cdot\FM_{\Thu}(V)=(\FQ\otimes\FM_{\Thu})(U\times V)
\end{align*}
as we needed to prove.
\end{proof}

\section{Proof of Theorem \ref{main}}\label{sec main}
Oh surprise, surprise, in this section we prove Theorem \ref{main}. In a nutshell, what we need to prove is that the measure $\FQ$ in Proposition \ref{prop main} does not depend on the individual sequence. First we recall some notation from the introduction: $\FQ_{\vec\gamma^o}$ is the measure on $\Delta_k$ given by
$$\FQ_{\vec\gamma^o}(U)=\frac{1}{\FB_{g,r}}\cdot\BI(\cdot,\vec\gamma^o)_*(\FM_{\Thu}\vert_{\calD_{\vec\gamma^o}})(\cone(U))$$
where
$$\BI:\CC(\Sigma)\times\CC(\Sigma)^k\to\BR_{\ge 0}^k,\ \ \ \BI:(\sigma,(\mu_i)_i)\mapsto(\iota(\sigma,\mu_i)_i),$$
where $\calD_{\vec\gamma}=\calD_{\Stab(\vec\gamma)}$ is as in Proposition \ref{prop fundamental domain}, and where $\cone(U) = \{tu \text{ with }t\in[0,1], u\in U\}$ is the cone with basis $U$. 

\begin{named}{Theorem \ref{main}}
For any $k$-multicurve $\vec\gamma^o=(\gamma_i)_i$ in $\Sigma$, we have 
\begin{equation}\label{eq main limit}
\BD_*(\FQ_{\vec\gamma^o}\otimes\FM_{\Thu})=\lim_{L\to\infty}m(\vec\gamma^o,L)
\end{equation}
in the weak-*-topology on Radon measures on $\CC(\Sigma)^k$.
\end{named}

\begin{proof}
Let us explain the basic strategy of the proof. First, we get from Proposition \ref{prop main} that there are subsequences $(L_n)$ such that the limit
\begin{equation}\label{eq main sub limit}
\FN=\lim_{n\to\infty}\frac 1{L_n^{6g-6+2r}}\sum_{\vec\gamma\in\Map(\Sigma)\cdot\vec\gamma^o}\delta_{\frac 1{L_n}\vec\gamma}
\end{equation}
exists and is of the form
\begin{equation}\label{eq getting lost}
\FN=\BD_*(\FQ\otimes\FM_{\Thu})
\end{equation}
for some measure $\FQ$ on $\Delta$. Since our family of measures is pre-compact by Lemma \ref{lem precompact}, all we need to do to prove the existence of the limit \eqref{eq main limit} is to show that the measure $\FN$ from \eqref{eq main sub limit} does not depend on the specific sequence $(L_n)$. Because of the specific form of $\FN$ it suffices to prove that the measure $\FQ$ does not depend on the sequence. To do that we will compute $\FQ$.

Once the strategy of the proof is clear, let us get into the matter. We start by fixing a filling multicurve $\sigma^o$ in $\Sigma$ satisfying \eqref{eq technical condition} and note that
$$\BI(\sigma^o,\cdot)_*\left(\frac 1{L_n^{6g-6+2r}}\sum_{\vec\gamma\in\Map(\Sigma)\cdot\vec\gamma^o}\delta_{\frac 1{L_n}\vec\gamma}\right)=\frac 1{L_n^{6g-6+2r}}\sum_{\vec\gamma\in\Map(\Sigma)\cdot\vec\gamma^o}\delta_{\frac 1{L_n} \BI(\sigma^o,\vec\gamma)}.$$
From Proposition \ref{prop special  case} we get that 
$$\BI(\cdot,\vec\gamma^o)_*\left(\frac{B(\sigma^o)}{\FB_{g,r}}\cdot(\FM_{\Thu}\vert_{\calD_{\vec\gamma^o}})\right)=\lim_{L\to\infty}\frac 1{L^{6g-6+2r}}\sum_{\vec\gamma\in\Map(\Sigma)\cdot\vec\gamma^o}\delta_{\frac 1L \BI(\sigma^o,\vec\gamma)}.$$
Now, from \eqref{eq main sub limit} and the continuity of $\BI(\sigma^o,\cdot)$ we get that
\begin{equation}\label{eq cafe1}
\BI(\sigma^o,\cdot)_*\FN=\BI(\cdot,\vec\gamma^o)_*\left(\frac{B(\sigma^o)}{\FB_{g,r}}\cdot(\FM_{\Thu}\vert_{\calD_{\vec\gamma^o}})\right).
\end{equation}
In light of \eqref{eq getting lost} we can write this as 
\begin{equation}\label{eq almost done}
\big(\BI(\sigma^o,\cdot)\circ\BD\big)_*(\FQ\otimes\FM_{\Thu})=\BI(\cdot,\vec\gamma^o)_*\left(\frac{B(\sigma^o)}{\FB_{g,r}}\cdot\FM_{\Thu}\right).
\end{equation}
To write this in a nicer way we will work in polar coordinates:
$$\polar:\BR^k_{\ge 0}\to\Delta_k\times\BR^0_{\ge 0},\ \ \polar(\vec x)=\left(\frac 1{\Vert\vec x\Vert}\vec x,\Vert\vec x\Vert\right).$$
The reason to do so is that we have
\begin{align*}
\polar\circ\BI(\sigma^o,\cdot)\circ\BD&:\Delta_k\times\CM\CL(\Sigma)\to\Delta_k\times\BR_{\ge 0}\\
\polar\circ\BI(\sigma^o,\cdot)\circ\BD&:((a_i),\lambda)\mapsto((a_i),\iota(\sigma^o,\lambda))
\end{align*}
and we then get from \eqref{eq almost done} that
$$\FQ\otimes\big(\iota(\sigma^o,\cdot)_*\FM_{\Thu}\big)=\frac{B(\sigma^o)}{\FB_{g,r}}\cdot\big(\polar\circ\BI(\cdot,\vec\gamma^o)\big)_*\left(\FM_{\Thu}\vert_{\calD_{\vec\gamma^o}}\right)$$
Evaluating both sides of this equality at the set $U\times[0,1]$ we have
$$\FQ(U)\cdot\big(\iota(\sigma^o,\cdot)_*\FM_{\Thu}\big)[0,1]=\frac{B(\sigma^o)}{\FB_{g,r}}\cdot\big(\polar\circ\BI(\cdot,\vec\gamma^o)\big)_*\left(\FM_{\Thu}\vert_{\calD_{\vec\gamma^o}}\right)(U\times[0,1])$$
All that is left is cleaning up a bit. Indeed, note that
$$\big(\iota(\sigma^o,\cdot)_*\FM_{\Thu}\big)[0,1]=\FM_{\Thu}(\{\lambda\in\CM\CL(\Sigma)\vert\iota(\sigma^o,\lambda)\le 1\})=B(\sigma^o)$$
and that
$$\big(\polar\circ\BI(\cdot,\vec\gamma^o)\big)_*\left(\FM_{\Thu}\vert_{\calD_{\vec\gamma^o}}\right)(U\times[0,1])=\BI(\cdot,\vec\gamma^o)_*(\FM_{\Thu}\vert_{\calD_{\vec\gamma^o}})(\cone(U))$$
where $\cone(U)=\polar^{-1}(U\times[0,1])=\{tu\text{ with }t\in[0,1]\text{ and }u\in U\}$ is the cone with base $U$. Combining the last three equalities we get
$$\FQ(U)\cdot B(\sigma^o)=\frac{B(\sigma^o)}{\FB_{g,r}}\cdot\BI(\cdot,\vec\gamma^o)_*(\FM_{\Thu}\vert_{\calD_{\vec\gamma^o}})(\cone(U)),$$
from where we get the identity
\begin{equation}\label{eq nail down measure}
\FQ(U)=\frac{1}{\FB_{g,r}}\cdot\BI(\cdot,\vec\gamma^o)_*(\FM_{\Thu}\vert_{\calD_{\vec\gamma^o}})(\cone(U)).
\end{equation}
We have proved that $\FQ$ is the measure given in \eqref{eq q measure}, that is $\FQ=\FQ_{\vec\gamma^o}$. In particular $\FQ$, and hence the sublimit $\FN$, does not depend on the specific sequence $(L_n)$. Summing up, we have proved that
$$\BD_*(\FQ_{\vec\gamma^o}\otimes\FM_{\Thu})=\lim_{n\to\infty}\frac 1{L_n^{6g-6+2r}}\sum_{\vec\gamma\in\Map(\Sigma)\cdot\vec\gamma^o}\delta_{\frac 1{L_n}\vec\gamma}$$
for any sequence $L_n\to\infty$ such that the limit on the right exists. Since our family of measures is pre-compact by Lemma \ref{lem precompact}, it follows that the actual limit exists. This concludes the proof of Theorem \ref{main}.
\end{proof}

We note that the measure $\FQ_{\vec\gamma^o}$ is 0 when restricted to the boundary of $\Delta_k$. Indeed, note that the preimage under $\BI(\cdot, \vec\gamma^o)$ of $\partial\Delta_k$ is the union of the sets $\{\lambda\in \CC(\Sigma) \text{ with } \iota(\lambda, \gamma_i)=0\}$ for $i=1, \ldots k$ and as we pointed out earlier, each of these sets has 0 Thurston measure. We record this for later use: 

\begin{lem}\label{lemma new}
For every $k$-multicurve $\vec\gamma^o$, we have $\FQ_{\vec\gamma^o}(\partial\Delta_k) = 0.$ \qed
\end{lem}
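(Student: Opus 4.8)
The plan is to unwind the definition \eqref{eq q measure} of $\FQ_{\vec\gamma^o}$ and reduce the statement to the standard fact that a fixed essential curve is disjoint from only a Thurston-null set of measured laminations. By \eqref{eq q measure} and the definition of push-forward,
$$\FQ_{\vec\gamma^o}(\partial\Delta_k)=\frac{1}{\FB_{g,r}}\cdot\BI(\cdot,\vec\gamma^o)_*(\FM_{\Thu}\vert_{\calD_{\vec\gamma^o}})\big(\cone(\partial\Delta_k)\big)=\frac{1}{\FB_{g,r}}\cdot\FM_{\Thu}\Big(\calD_{\vec\gamma^o}\cap\BI(\cdot,\vec\gamma^o)^{-1}\big(\cone(\partial\Delta_k)\big)\Big),$$
so it suffices to prove that $\BI(\cdot,\vec\gamma^o)^{-1}(\cone(\partial\Delta_k))$ is $\FM_{\Thu}$-null inside $\CM\CL(\Sigma)$.

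First I would record the set-theoretic inclusion $\cone(\partial\Delta_k)\subseteq\bigcup_{i=1}^k\{\vec x\in\BR_{\ge0}^k\ :\ x_i=0\}$: if $u\in\partial\Delta_k$ then some coordinate $u_i$ vanishes, and hence so does the $i$-th coordinate of $tu$ for every $t\in[0,1]$. Since $\BI(\lambda,\vec\gamma^o)=(\iota(\lambda,\gamma_1),\dots,\iota(\lambda,\gamma_k))$, taking preimages yields
$$\BI(\cdot,\vec\gamma^o)^{-1}\big(\cone(\partial\Delta_k)\big)\ \subseteq\ \bigcup_{i=1}^k\{\lambda\in\CM\CL(\Sigma)\ :\ \iota(\lambda,\gamma_i)=0\}.$$

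Finally I would invoke Masur's theorem recalled in item (2) of Section \ref{sec preli}: the uniquely ergodic measured laminations that intersect every curve form a set of full Thurston measure, so for each fixed essential curve $\gamma_i$ the set $\{\lambda\in\CM\CL(\Sigma):\iota(\lambda,\gamma_i)=0\}$ is contained in the complement of a full-measure set and is therefore $\FM_{\Thu}$-null. A finite union of null sets is null, so the right-hand side above is null, hence so is its intersection with $\calD_{\vec\gamma^o}$, and therefore $\FQ_{\vec\gamma^o}(\partial\Delta_k)=0$. There is essentially no obstacle here: the only point that needs a moment's thought is the inclusion $\cone(\partial\Delta_k)\subseteq\bigcup_i\{x_i=0\}$, and everything else is bookkeeping on top of facts already established.
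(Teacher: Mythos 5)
Your proof is correct and follows essentially the same route as the paper: both reduce $\FQ_{\vec\gamma^o}(\partial\Delta_k)$ to the observation that the relevant preimage under $\BI(\cdot,\vec\gamma^o)$ lies in $\bigcup_i\{\lambda:\iota(\lambda,\gamma_i)=0\}$, each of which is $\FM_{\Thu}$-null by Masur's theorem quoted in the preliminaries. Your only addition is the (correct and slightly more careful) explicit check that $\cone(\partial\Delta_k)\subseteq\bigcup_i\{x_i=0\}$, which the paper passes over silently.
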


\section{Assymptotic distributions of length vectors}\label{sec counting}
In this section we derive Theorem \ref{main counting} from Theorem \ref{main}. First we start with a more general but weaker version of Theorem \ref{main counting}:

\begin{prop}\label{prop general counting}
With notation as in Theorem \ref{main} let $F:\CC(\Sigma)^k\to\BR_{\ge 0}^m$ be a continuous, positive and homogenous map and let
\begin{equation}\label{eq I am getting lost}
\FC(F)\stackrel{\text{def}}=\big((F\circ\BD)_*(\FQ_{\vec\gamma^o}\otimes\FM_{\Thu})\big)(\{\vec x\in\BR^k_{\ge 0}\text{ with }\Vert\vec x\Vert\le 1\}).
\end{equation}
Denoting by 
$${\bf M}_{F,\gamma^o}(L)=\{\vec\gamma\in\Map(\Sigma)\cdot\vec\gamma^o\text{ with }\Vert F(\vec\gamma)\Vert\le L\}$$
the set of all $k$-multicurrents $\vec\gamma$ in the orbit of $\vec\gamma^o$ whose image under $F$ has at most norm $1$, we have
$$\frac 1{\FC(F)}\big((F\circ\BD)_*(\FQ_{\vec\gamma}\otimes\FM_{\Thu})\big)=\lim_{L\to\infty}\frac 1{\vert{\bf M}_{F,\vec\gamma^o}(L)\vert}\sum_{\vec\gamma\in\Map(\Sigma)\cdot\vec\gamma^o}\delta_{\frac 1LF(\vec\gamma)}$$
with respect to the weak-*-topology on measures on $\BR_{\ge 0}^m$. 
\end{prop}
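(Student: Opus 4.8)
The plan is to deduce this from Theorem \ref{main} by the standard device of converting a counting statement into a weak-* convergence statement for scaled orbit measures, the key technical point being a change of the scaling parameter. First I would observe that $F$ being continuous, positive and homogeneous of degree $1$ means $\Vert F(\cdot)\Vert$ is itself a continuous, positive, homogeneous (degree $1$) function on $\CC(\Sigma)^k$, and moreover $\Vert F(t\vec\mu)\Vert = t\Vert F(\vec\mu)\Vert$; hence $\frac 1L F(\vec\gamma) = F(\frac 1L\vec\gamma)$ and $\Vert F(\vec\gamma)\Vert\le L$ is the same as $\Vert F(\frac 1L\vec\gamma)\Vert\le 1$. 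Therefore the un-normalized counting measure can be rewritten as
\begin{equation*}
\frac 1{L^{6g-6+2r}}\sum_{\vec\gamma\in\Map(\Sigma)\cdot\vec\gamma^o}\delta_{\frac 1L F(\vec\gamma)} = F_*\big(m(\vec\gamma^o,L)\big),
\end{equation*}
where $m(\vec\gamma^o,L)$ is the measure from Theorem \ref{main}. The issue is that $F$ is not proper on $\CC(\Sigma)^k$, so pushing forward under $F$ does not obviously commute with the weak-* limit; but $\FN := \BD_*(\FQ_{\vec\gamma^o}\otimes\FM_{\Thu})$ is supported on the image of $\BD$, and $\BD(\Delta_k\times\CM\CL(\Sigma))$ lies in $\CC_K(\Sigma)^k$ for a fixed compact $K\subset\Sigma\setminus\partial\Sigma$ (item (2) of Section \ref{sec preli}), on which $F$ restricted is proper. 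So I would first verify that $F$ restricted to $\CC_K(\Sigma)^k$ is proper, which lets us pass the weak-* limit $m(\vec\gamma^o,L)\to\FN$ through $F_*$ to conclude
\begin{equation*}
\lim_{L\to\infty} F_*\big(m(\vec\gamma^o,L)\big) = F_*(\FN) = (F\circ\BD)_*(\FQ_{\vec\gamma^o}\otimes\FM_{\Thu})
\end{equation*}
as Radon measures on $\BR_{\ge 0}^m$.

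Next I would identify the normalizing constant. Applying the displayed convergence to the function $\mathbf 1_{\{\Vert\vec x\Vert\le 1\}}$ — or rather testing against a continuous bump function and using that the limit measure gives zero mass to the boundary sphere $\{\Vert\vec x\Vert=1\}$ — gives
\begin{equation*}
\lim_{L\to\infty}\frac 1{L^{6g-6+2r}}\big\vert{\bf M}_{F,\vec\gamma^o}(L)\big\vert = (F\circ\BD)_*(\FQ_{\vec\gamma^o}\otimes\FM_{\Thu})\big(\{\Vert\vec x\Vert\le 1\}\big) = \FC(F).
\end{equation*}
Here I would need to justify that $(F\circ\BD)_*(\FQ_{\vec\gamma^o}\otimes\FM_{\Thu})$ assigns no mass to the sphere $\{\Vert\vec x\Vert = 1\}$: since $F\circ\BD((a_i),\lambda)$ scales linearly in $\lambda$, the preimage of the sphere is contained in a set of the form $\{((a_i),\lambda) : g(a_i,\lambda) = 1\}$ for a degree-one homogeneous function, and by the $t^{6g-6+2r}$-scaling of $\FM_{\Thu}$ together with $\FQ_{\vec\gamma^o}(\partial\Delta_k)=0$ (Lemma \ref{lemma new}) and a Fubini argument on the codimension-one level set, this has measure zero; alternatively one can cite that $\FC(F) < \infty$ and the measure has no atoms along the scaling direction. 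Also $\FC(F)>0$ because $\FQ_{\vec\gamma^o}$ is a nonzero measure on $\Delta_k\setminus\partial\Delta_k$ and $\FM_{\Thu}$ is infinite but assigns positive finite mass to small balls, so there is a set of positive product measure mapping into $\{\Vert\vec x\Vert\le 1\}$ by positivity of $F$.

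Finally, dividing the first limit by the second — legitimate since $\FC(F)\in(0,\infty)$ — gives
\begin{equation*}
\lim_{L\to\infty}\frac 1{\vert{\bf M}_{F,\vec\gamma^o}(L)\vert}\sum_{\vec\gamma\in\Map(\Sigma)\cdot\vec\gamma^o}\delta_{\frac 1L F(\vec\gamma)} = \frac 1{\FC(F)}(F\circ\BD)_*(\FQ_{\vec\gamma^o}\otimes\FM_{\Thu}),
\end{equation*}
which is the assertion. The main obstacle, as indicated, is the non-properness of $F$ on all of $\CC(\Sigma)^k$: everything hinges on first restricting to $\CC_K(\Sigma)^k$ (where the orbit and the limit measure live) and checking that $F$ is proper there, and on the boundary-sphere null-set argument needed to turn the vague convergence into the precise statement about $\vert{\bf M}_{F,\vec\gamma^o}(L)\vert$. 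Both are routine given the scaling properties of $\FM_{\Thu}$ and Lemma \ref{lemma new}, but they are where care is required.
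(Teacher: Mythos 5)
Your argument is essentially the paper's: use homogeneity to rewrite the counting measure as $F_*\big(m(\vec\gamma^o,L)\big)$, pass the limit through $F_*$ via Theorem \ref{main} and continuity, use the $L^{6g-6+2r}$-scaling of $(F\circ\BD)_*(\FQ_{\vec\gamma^o}\otimes\FM_{\Thu})$ to see that the unit sphere is a null set and hence (Portmanteau) that $\vert{\bf M}_{F,\vec\gamma^o}(L)\vert/L^{6g-6+2r}\to\FC(F)$, and then renormalize. Your additional care about properness of $F$ on $\CC_K(\Sigma)^k$ and about $\FC(F)\in(0,\infty)$ just makes explicit two points the paper treats as immediate, so this is the same proof, correctly carried out.
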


Here we say that the map $F$ is positive if it maps non-zero $k$-multicurrents to non-zero vectors, and we say that it is homogenous if $F(t\cdot\vec\mu)=t\cdot F(\vec\mu)$ for all $t\ge 0$ and all $\vec\mu\in\CC(\Sigma)^k$. Positivity implies that $\vert{\bf M}_{F,\vec\gamma^o}(L)\vert <\infty$ for all $L$.

\begin{proof}
From the homogeneity of $F$ we get that $F(\frac 1L\vec\gamma)=\frac 1LF(\vec\gamma)$ and hence that
\begin{equation}\label{eq getting tired}
F_*\left(\frac 1{L^{6g-6+2r}}\sum_{\vec\gamma\in\Map(\Sigma)\cdot\vec\gamma^o}\delta_{\frac 1L\vec\gamma}\right)=\frac 1{L^{6g-6+2r}}\sum_{\vec\gamma\in\Map(\Sigma)\cdot\vec\gamma^o}\delta_{\frac 1LF(\vec\gamma))}.
\end{equation}
Evaluating the right side we get that 
$$\left(F_*\left(\frac 1{L^{6g-6+2r}}\sum_{\vec\gamma\in\Map(\Sigma)\cdot\vec\gamma^o}\delta_{\frac 1L\vec\gamma}\right)\right)(\{\vec x\in\BR^k_{\ge 0}\text{ with }\Vert\vec x\Vert\le 1\})=\frac{\vert{\bf M}_{F,\vec\gamma^o}(L)\vert}{L^{6g-6+2r}}$$
Now, Theorem \ref{main} and the continuity of $F$ imply that
\begin{equation}\label{eq cute baby}
(F\circ\BD)_*(\FQ_{\vec\gamma^o}\otimes\FM_{\Thu})=\lim_{L\to\infty}\frac 1{L^{6g-6+2r}}\sum_{\vec\gamma\in\Map(\Sigma)\cdot\vec\gamma^o}\delta_{\frac 1LF(\vec\gamma))}.
\end{equation}
Moreover, homogeneity of $F$ implies that $(F\circ\BD)_*(\FQ_{\vec\gamma^o}\otimes\FM_{\Thu})$ has the same scaling behavior as the Thurston measure:
$$\big((F\circ\BD)_*(\FQ_{\vec\gamma^o}\otimes\FM_{\Thu})\big)(L\cdot U)=L^{6g-6+2r}\cdot \big((F\circ\BD)_*(\FQ_{\vec\gamma^o}\otimes\FM_{\Thu})\big)(U)$$
for all $U\subset\BR_{\ge 0}^k$. This implies that the $(F\circ\BD)_*(\FQ_{\vec\gamma^o}\otimes\FM_{\Thu})$-measure of the set $\{\Vert F(\cdot)\Vert=1\}$ vanishes and hence, by the convergence of the measures and the Portmanteau theorem, that
$$\lim_{L\to\infty}\frac{\vert{\bf M}_{F,\vec\gamma^o}(L)\vert}{L^{6g-6+2r}}=\big((F\circ\BD)_*(\FQ_{\vec\gamma^o}\otimes\FM_{\Thu})\big)(\{\vec x\in\BR^k_{\ge 0}\text{ with }\Vert\vec x\Vert\le 1\})\stackrel{\eqref{eq I am getting lost}}=\FC(F).$$
It follows thus that, up to paying the price of multiplying by the constant $\FC(F)$, we can replace in the limit in \eqref{eq cute baby} the scaling factor $L^{6g-6+2r}$ by $\vert{\bf M}_{F,\vec\gamma^o}(L)\vert$. When we do that we get
$$\frac 1{\FC(F)}\big((F\circ\BD)_*(\FQ_{\vec\gamma^o}\otimes\FM_{\Thu})\big)=\lim_{L\to\infty}\frac 1{\vert{\bf M}_{F,\vec\gamma^o}(L)\vert}\sum_{\vec\gamma\in\Map(\Sigma)\cdot\vec\gamma^o}\delta_{\frac 1L\vec\gamma}$$
This concludes the proof of Proposition \ref{prop general counting}.
\end{proof}

Shortly, when we prove Theorem \ref{main counting}, we will consider maps $\CC(\Sigma)^k\to\BR_{\ge 0}^k$ of a very specific form. Note however that Proposition \ref{prop general counting} allows for many different kinds of maps. For example, if we fix two negatively curved Riemannian metrics $X,Y$ on $\Sigma$, a conformal structure $Z$ on $\Sigma$, and a generating set $S$ for the fundamental group of $\pi_1(\Sigma)$ then we could apply the proposition to the following map:
$$F:\CC(\Sigma)^2\to\BR^4,\ \ F(\mu_1,\mu_2)=\left(\begin{array}{c}-\ell_X(\mu_1)+\Lip(Y,X)\cdot\ell_Y(\mu_1)\\ 
\ell_X(\mu_1)+\ell_X(\mu_2)-\ell_X(\mu_1+\mu_2)\\ \Ext_Z(\mu_1+\mu_2) \\ \Vert\mu_1\Vert_S+\Vert\mu_2\Vert_S-\Vert\mu_1+\mu_2\Vert_S\end{array}\right)$$
where $\ell_X(\cdot)$ and $\ell_Y(\cdot)$ are the length function associated to the Riemannian metrics $X$ and $Y$, where $\Lip(Y,X)$ is the smallest possible Lipschitz constant of maps $(\Sigma,Y)\to(\Sigma,X)$ homotopic to the identity, where $\Ext_Z(\cdot)$ is the extremal length associated to the conformal structure $Z$, and where $\Vert\cdot\Vert_S$ is the stable norm on $\pi_1(\Sigma)$ associated to the word metric given by $S$. We are not sure why anybody would want to consider such maps, but we just wanted to point out that it would be possible.
\medskip

After this comment, let us prove Theorem \ref{main counting}, but first we recall the notation used in the introduction: Given a continuous, homogenous and positive function $\phi:\CC(\Sigma)\to\BR_{\ge 0}$ and a $k$-multicurve $\vec\gamma$ we are setting
$$\BL_\phi(\vec\gamma)=\left(\frac 1{\sum_i \phi(\gamma_i)}(\phi(\gamma_1),\dots,\phi(\gamma_k)),\sum_i \phi(\gamma_i)\right)\in\Delta_k\times\BR_{\ge 0}.$$
The theorem is about the convergence of the measures 
$$\underline m(\phi,\vec\gamma^o,L)=\frac 1{\vert{\bf M}_{\phi,\vec\gamma^o}(L)\vert}\sum_{\vec\gamma\in\Map(\Sigma)\cdot\vec\gamma^o}\delta_{\BL_\phi(\frac 1L\vec\gamma)}$$
on $\Delta_k\times\BR_{\ge 0}$, where 
$${\bf M}_{\phi,\vec\gamma^o}(L)=\left\{(\gamma_i)_i\in\Map(\Sigma)\cdot\vec\gamma^o\ \middle\vert\  \sum_i\phi(\gamma_i)\le L\right\}.$$
This is what we have to prove:

\begin{named}{Theorem \ref{main counting}}
For every $k$-multicurve $\vec\gamma^o=(\gamma_1,\dots,\gamma_k)$ in $\Sigma$ there is a probability measure $\FP_{\vec\gamma^o}$ on $\Delta_k$ such that for any continuous, positive, homogenous function $\phi:\CC(\Sigma)\to\BR_{\ge 0}$ on the space of currents on $\Sigma$ we have
$$\lim_{L\to\infty}\underline m(\phi,\vec\gamma^o,L)=\FP_{\vec\gamma^o}\otimes\left((6g-6+2r)\cdot t^{6g-7+2r}\cdot d{\bf t}\right)$$
where $d{\bf t}$ stands for the standard Lebesgue measure on $\BR_{\ge 0}$. In particular, the limiting distribution is independent of $\phi$.
\end{named}
\begin{proof}
Applying Proposition \ref{prop general counting} to the map
$$F_\phi:\CC(\Sigma)^k\to\BR_{\ge 0}^k,\ \ F_\phi((\mu_i)_i)=(\phi(\mu_i)_i)$$
we get that
\begin{equation}\label{eq ciabata}
\frac 1{\FC(F_\phi)}\big((F_\phi\circ\BD)_*(\FQ_{\vec\gamma^o}\otimes\FM_{\Thu})\big)=\lim_{L\to\infty}\frac 1{\vert{\bf M}_{F_\phi,\vec\gamma^o}(L)\vert}\sum_{\vec\gamma\in\Map(\Sigma)\cdot\vec\gamma^o}\delta_{\frac 1LF_\phi(\vec\gamma)}
\end{equation}
Let us calculate $\FC(F_\phi)$:

\begin{lem}\label{lem ensaimada}
We have $\FC(F_\phi)=\Vert\FQ_{\vec\gamma^o}\Vert\cdot B(\phi)$ where, as before, $B(\phi)=\FM_{\Thu}(\{\lambda\in\CM\CL(\Sigma) \text{ with } \phi(\lambda)\le 1\})$.
\end{lem}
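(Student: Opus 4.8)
The plan is to unwind the definition of $\FC(F_\phi)$ and recognize it as a product measure evaluated on a ``cylinder'' over $\Delta_k$. Recall that
$$\FC(F_\phi) = \big((F_\phi\circ\BD)_*(\FQ_{\vec\gamma^o}\otimes\FM_{\Thu})\big)\big(\{\vec x\in\BR^k_{\ge 0}\text{ with }\Vert\vec x\Vert\le 1\}\big),$$
so the first step is to compute the composition $F_\phi\circ\BD$. Since $\BD((a_i),\lambda)=(a_1\lambda,\dots,a_k\lambda)$ and $F_\phi((\mu_i)_i)=(\phi(\mu_i))_i$, homogeneity of $\phi$ gives
$$(F_\phi\circ\BD)((a_i),\lambda)=(\phi(a_i\lambda))_i=(a_i\phi(\lambda))_i=\phi(\lambda)\cdot(a_i)_i.$$
Thus the preimage $(F_\phi\circ\BD)^{-1}(\{\Vert\vec x\Vert\le 1\})$ consists of those pairs $((a_i),\lambda)$ with $\phi(\lambda)\cdot\Vert(a_i)_i\Vert\le 1$; since $(a_i)_i\in\Delta_k$ has $\ell^1$-norm equal to $1$, this is exactly $\{((a_i),\lambda)\in\Delta_k\times\CM\CL(\Sigma):\phi(\lambda)\le 1\}=\Delta_k\times\{\lambda:\phi(\lambda)\le 1\}$.

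The second step is then a direct application of Fubini/Tonelli for the product measure $\FQ_{\vec\gamma^o}\otimes\FM_{\Thu}$: the measure of the rectangle $\Delta_k\times\{\lambda:\phi(\lambda)\le 1\}$ is $\FQ_{\vec\gamma^o}(\Delta_k)\cdot\FM_{\Thu}(\{\lambda:\phi(\lambda)\le 1\})$. By definition $\FQ_{\vec\gamma^o}(\Delta_k)=\Vert\FQ_{\vec\gamma^o}\Vert$ and $\FM_{\Thu}(\{\lambda\in\CM\CL(\Sigma):\phi(\lambda)\le 1\})=B(\phi)$, so pushing forward and evaluating gives
$$\FC(F_\phi)=(\FQ_{\vec\gamma^o}\otimes\FM_{\Thu})\big(\Delta_k\times\{\lambda:\phi(\lambda)\le 1\}\big)=\Vert\FQ_{\vec\gamma^o}\Vert\cdot B(\phi),$$
which is exactly the claim. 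One should note that $B(\phi)<\infty$ because $\phi$ is continuous, positive and homogenous: the set $\{\lambda\in\CM\CL(\Sigma):\phi(\lambda)\le 1\}$ is then compact (positivity of $\phi$ forces it to be bounded away from infinity in $\CC_K(\Sigma)$ via the standard argument that a positive homogenous function on currents is bounded below by a multiple of hyperbolic length on $\CC_K$), so the product measure of the rectangle is genuinely finite and the factorization is legitimate.

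I do not expect any serious obstacle here — the lemma is essentially a bookkeeping exercise identifying $F_\phi\circ\BD$ as scalar multiplication by $\phi(\lambda)$ and then reading off that the relevant set is a product. The only point requiring a word of care is the finiteness of $B(\phi)$ (so that ``$\Vert\FQ_{\vec\gamma^o}\Vert\cdot B(\phi)$'' is a meaningful product and not $\infty\cdot 0$ or similar), and checking that the pushforward-then-evaluate step really is just evaluating $\FQ_{\vec\gamma^o}\otimes\FM_{\Thu}$ on the preimage, which is immediate from the definition of pushforward measure.
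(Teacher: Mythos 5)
Your argument is correct and coincides with the paper's own proof: both compute $(F_\phi\circ\BD)((a_i)_i,\lambda)=\phi(\lambda)\cdot(a_i)_i$, use $\Vert(a_i)_i\Vert=1$ to see that the preimage of the unit ball is the rectangle $\Delta_k\times\{\lambda\in\CM\CL(\Sigma)\text{ with }\phi(\lambda)\le 1\}$, and then evaluate the product measure $\FQ_{\vec\gamma^o}\otimes\FM_{\Thu}$ on that rectangle to get $\Vert\FQ_{\vec\gamma^o}\Vert\cdot B(\phi)$. Your additional remark on the finiteness of $B(\phi)$ is a harmless supplement that the paper leaves implicit.
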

\begin{proof}
Noting that 
$$(F_\phi\circ\BD)((a_i)_i,\lambda)=(a_i\cdot\phi(\lambda))_i=\phi(\lambda)\cdot(a_i)_i$$
and hence that
$$\Vert(F_\phi\circ\BD)((a_i)_i,\lambda)\Vert=\phi(\lambda)\cdot\Vert(a_i)_i\Vert=\phi(\lambda)$$
we have
\begin{align*}
\FC(F_\phi)
&\stackrel{\eqref{eq I am getting lost}}=\big((F_\phi\circ\BD)_*(\FQ_{\vec\gamma^o}\otimes\FM_{\Thu})\big)(\{\vec x\in\BR^k_{\ge 0}\text{ with }\Vert\vec x\Vert\le 1\})\\
&=(\FQ_{\vec\gamma^o}\otimes\FM_{\Thu})(\{((a_i),\lambda)\in\Delta_k\times\CM\CL(\Sigma)\text{ with }\Vert(F_\phi\circ\BD)((a_i),\lambda)\Vert\le 1\})\\
&=(\FQ_{\vec\gamma^o}\otimes\FM_{\Thu})(\{((a_i),\lambda)\in\Delta_k\times\CM\CL(\Sigma)\text{ with }\phi(\lambda)\le 1\})\\
&=(\FQ_{\vec\gamma^o}\otimes\FM_{\Thu})(\Delta_k\times\{\lambda\in\CM\CL(\Sigma)\text{ with }\phi(\lambda)\le 1\})\\
&=\Vert\FQ_{\vec\gamma^o}\Vert\cdot B(\phi)
\end{align*}
as we wanted to prove.
\end{proof}

Considering again our version of polar coordinates
$$\polar:\BR_{\ge 0}^k\to\Delta_k\times\BR_{k\ge 0},\ \ \polar(\vec x)=\left(\frac 1{\Vert\vec x\Vert}\cdot\vec x,\Vert\vec x\Vert\right),$$
note that we have
$$\polar(F_\phi((\mu_i)_i))=\polar(\phi(\mu_1),\dots,\phi(\mu_k))=\BL_\phi((\mu_i)_i)$$
and hence that
$$\underline m(\phi,\vec\gamma^o,L)=\polar_*\left(\frac 1{\vert{\bf M}_{F_\phi,\vec\gamma^o}(L)\vert}\sum_{\vec\gamma\in\Map(\Sigma)\cdot\vec\gamma^o}\delta_{\frac 1LF_\phi(\vec\gamma)}\right).$$
From \eqref{eq ciabata}, Lemma \ref{lem ensaimada}, and the continuity of $\polar$ we get thus that
\begin{equation}\label{eq ciabata1}
\frac 1{\Vert\FQ_{\vec\gamma^o}\Vert\cdot B(\phi)}\big((\polar\circ F_\phi\circ\BD)_*(\FQ_{\vec\gamma^o}\otimes\FM_{\Thu})\big)=\lim_{L\to\infty}\underline m(\phi,\vec\gamma^o,L).
\end{equation}
What we have to do is to calculate the measure on the left side in \eqref{eq ciabata1} and what helps us is that the map
\begin{equation}\label{eq composition2}
\polar\circ F_\phi\circ\BD:\Delta_k\times\CM\CL(\Sigma)\to\Delta_k\times\BR_{\ge 0}
\end{equation}
is very simple:
\begin{align*}
(\polar\circ F_\phi\circ\BD)((a_i)_i,\lambda)&=(\polar\circ F_\phi)((a_i\cdot\lambda)_i)\\
&\stackrel{(*)}=\polar((a_i\cdot \phi(\lambda))_i)\stackrel{(**)}=((a_i),\phi(\lambda))
\end{align*}
where in (*) we used homogeneity of $\phi$ and in (**) the fact that $\sum_ia_i=1$ because $(a_i)_i\in\Delta_k$. In other words we have that $\polar\circ F_\phi\circ\BD=\Id\times \phi$ and hence that
$$(\polar\circ F_\phi\circ\BD)_*(\FQ_{\vec\gamma^o}\otimes\FM_{\Thu})=\Id_*(\FQ_{\vec\gamma^o})\otimes \phi_*(\FM_{\Thu})=\FQ_{\vec\gamma^o}\otimes\phi_*(\FM_{\Thu}).$$
We record what we have proved so far:
\begin{equation}\label{eq polar key}
\frac 1{\Vert\FQ_{\vec\gamma^o}\Vert\cdot B(\phi)}\cdot\left(\FQ_{\vec\gamma^o}\otimes\phi_*(\FM_{\Thu})\right)=\lim_{L\to\infty}\underline m(\phi,\vec\gamma^o,L).
\end{equation}
The next observation is that it is easy to give a formula for $\phi_*(\FM_{\Thu})$. 

\begin{lem}\label{lem calculation}
We have $\phi_*(\FM_{\Thu})=B(\phi)\cdot(6g-6+2r)\cdot t^{6g-7+2r}\cdot d{\bf t}$ where $d{\bf t}$ stands for the standard Lebesgue measure on $\BR_{\ge 0}$.
\end{lem}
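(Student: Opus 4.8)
The plan is to pin down $\phi_*(\FM_{\Thu})$ by computing its values on intervals and then recognising the resulting distribution function. Since $\phi_*(\FM_{\Thu})$ is a Radon measure on $\BR_{\ge 0}$, it is determined by the numbers $\phi_*(\FM_{\Thu})([0,s])=\FM_{\Thu}(\{\lambda\in\CM\CL(\Sigma)\text{ with }\phi(\lambda)\le s\})$ for $s\ge 0$, so the whole lemma reduces to evaluating this one-parameter family.

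The key step is to combine the homogeneity of $\phi$ with the scaling behaviour of the Thurston measure. Because $\phi(t\lambda)=t\,\phi(\lambda)$ for $t\ge 0$, one has $\{\lambda\in\CM\CL(\Sigma)\text{ with }\phi(\lambda)\le s\}=s\cdot\{\lambda\in\CM\CL(\Sigma)\text{ with }\phi(\lambda)\le 1\}$ for every $s>0$; since $\FM_{\Thu}(s\cdot U)=s^{6g-6+2r}\FM_{\Thu}(U)$ this immediately gives $\phi_*(\FM_{\Thu})([0,s])=s^{6g-6+2r}\cdot B(\phi)$. On the other hand $\int_0^s(6g-6+2r)\,t^{6g-7+2r}\,dt=s^{6g-6+2r}$, so the measure $B(\phi)\cdot(6g-6+2r)\cdot t^{6g-7+2r}\,d{\bf t}$ has exactly the same values on all intervals $[0,s]$. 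As two Radon measures on $\BR_{\ge 0}$ agreeing on every $[0,s]$ coincide, this proves the lemma.

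I do not expect a genuine obstacle; the argument is essentially a one-line computation once the homogeneity is brought to bear. The only points warranting a remark are that $B(\phi)$ is finite — which holds because positivity and continuity of $\phi$ force $\{\lambda\in\CM\CL(\Sigma)\text{ with }\phi(\lambda)\le 1\}$ to be a bounded, hence relatively compact, subset of some $\CC_K(\Sigma)$, on which the Radon measure $\FM_{\Thu}$ is finite — and that the pushforward carries no atom at the origin, which is clear since $\phi^{-1}(0)\cap\CM\CL(\Sigma)=\{0\}$ is a single point whose Thurston measure is forced to be $0$ by the same scaling relation.
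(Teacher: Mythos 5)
Your proposal is correct and follows essentially the same route as the paper's proof: both evaluate $\phi_*(\FM_{\Thu})$ on intervals $[0,s]$, use homogeneity of $\phi$ together with the scaling $\FM_{\Thu}(s\cdot U)=s^{6g-6+2r}\FM_{\Thu}(U)$ to get $B(\phi)\cdot s^{6g-6+2r}$, and match this against the integral of the claimed density. Your extra remarks on the finiteness of $B(\phi)$ and the absence of an atom at $0$ are harmless additions not needed in the paper's argument.
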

\begin{proof}
Note that it suffices to show that the measures on the left and the right agree when evaluated on all intervals $[0,T]$. On the right side we get the value
$$B(\phi)\cdot(6g-6+2r)\int_0^T t^{6g-7+2r}d{\bf t}=B(\phi)\cdot T^{6g-6+2r}$$
and on the left we get
\begin{align*}
\phi_*\FM_{\Thu}[0,T]
&=\FM_{\Thu}(\{\lambda\in\CM\CL(\Sigma)\text{ with }\phi(\lambda)\le T\})\\
&\stackrel{(*)}=\FM_{\Thu}(\{\lambda\in\CM\CL(\Sigma)\text{ with }\phi(\lambda)\le 1\})\cdot T^{6g-6+2r}\\
&=B(\phi)\cdot T^{6g-6+2r}
\end{align*}
where in (*) we used once again that $\phi$ is homogenous as well as the scaling behavior of $\FM_{\Thu}$. Having found that both measures assign the same value to the interval $[0,T]$ for all $T$, we get that they both agree.
\end{proof}

Continuing with the proof of Theorem \ref{main counting} note that combining \eqref{eq polar key} and Lemma \ref{lem calculation} we get that
$$\frac 1{\Vert\FQ_{\vec\gamma^o}\Vert}\cdot\FQ_{\vec\gamma^o}\otimes\big((6g-6+2r)\cdot t^{6g-7+2r}d{\bf t}\big) = \lim_{L\to\infty}\underline m(\phi,\vec\gamma^o,L).$$
Theorem \ref{main counting} follows when we set $\FP_{\vec\gamma^o}=\frac 1{\Vert\FQ_{\vec\gamma^o}\Vert}\cdot\FQ_{\vec\gamma^o}$.
\end{proof}

\section{Asymptotic distribution of vectors of ratios}\label{sec ratios}
We now prove Theorem \ref{main ratios}. Once again we recall the notation from the introduction. Given two continuous, homogenous, positive functions $\phi,\psi:\CC(\Sigma)\to\BR_{\ge 0}$ and a $k$-multicurve $\vec\gamma$ we denote by
$$\ratio_{\psi/\phi}((\gamma_i)_i)=\left(\frac{\psi(\gamma_i)}{\phi(\gamma_i)}\right)_i\in\BR_{\ge 0}^k$$
the vector whose entries are the ratios of the $\psi$ and $\phi$ values of each component of $\vec\gamma$. What we care about is the limit of the measures 
$$r(\phi/\psi,\vec\gamma^o, L)=\frac 1{\vert{\bf M}_{\phi,\vec\gamma^o}(L)\vert}\sum_{\vec\gamma\in{\bf M}_{\phi,\vec\gamma^o}(L)}\delta_{\ratio_{\psi/\phi}(\vec\gamma)}$$
as $L\to\infty$.

\begin{named}{Theorem \ref{main ratios}}
For any two continuous, homogenous, positive functions $\phi,\psi:\CC(\Sigma)\to\BR_{\ge 0}$ there is a probability measure $\FR_{\psi/\phi}$ on $\BR_{\ge 0}$ with
$$\diag_*(\FR_{\psi/\phi})=\lim_{L\to\infty}\frac 1{\vert{\bf M}_{\phi,\vec\gamma^o}(L)\vert}\underline r(\phi/\psi,\vec\gamma^o, L)$$
for any $k$-multicurve $\vec\gamma^o$. Here $\diag:\BR_{\ge 0}\to\BR_{\ge 0}^k$ is the diagonal map $\diag(t)=(t,\dots,t)$.
\end{named}
\begin{proof}
First note that, as in the proof of Theorem \ref{main counting} we get, for every $\phi$, that
$$\vert{\bf M}_{\phi,\vec\gamma^o}(L)\vert\sim\FC(\phi)\cdot L^{6g-6+2r}=\Vert\FQ_{\vec\gamma^o}\Vert\cdot B(\phi)\cdot L^{6g-6+2r},$$
meaning that for every $\psi$ we have
$$ r(\phi/\psi,\vec\gamma^o, L)\sim\frac 1{\Vert\FQ_{\vec\gamma^o}\Vert\cdot B(\phi)}\frac 1{L^{6g-6+2r}}\sum_{\vec\gamma\in{\bf M}_{\phi,\vec\gamma^o}(L)}\delta_{\ratio_{\psi/\phi}(\vec\gamma)},$$
where we have used $\sim$ to indicate that the expressions are asymptotic as $L\to\infty$. It follows that it suffices to consider the measures on the right side.

Well, note that the map which sends the $k$-multicurve $\vec\gamma$ to $\ratio_{\psi/\phi}(\vec\gamma)$ extends continuously to the space of $k$-multicurrents with non-zero entries:
$$\ratio_{\psi/\phi}:(\CC(\Sigma)\setminus\{0\})^k\to \BR_{\ge 0}^k,\ \ 
\ratio_{\psi/\phi}((\mu_i)_i)=\left(\frac{\psi(\mu_i)}{\phi(\mu_i)}\right)_i$$
and that this map has the following properties:
\begin{align}
\ratio_{\phi/\psi}(L\cdot\vec\mu)&=\ratio_{\phi/\psi}(\vec\mu),\text{ and}   \label{equ1}\\  
(\ratio_{\psi/\phi}\circ\BD)((a_i),\lambda)&=\diag\left(\frac{\psi(\lambda)}{\phi(\lambda)}\right).  \label{equ2}
\end{align}
From \eqref{equ1} we get that
\begin{align*}
\frac 1{L^{6g-6+2r}}\sum_{\vec\gamma\in{\bf M}_{\phi,\vec\gamma^o}(L)}&\delta_{\ratio_{\psi/\phi}(\vec\gamma)}=\\
&=\frac 1{L^{6g-6+2r}}\sum_{\vec\gamma\in{\bf M}_{\phi,\vec\gamma^o}(L)}\delta_{\ratio_{\psi/\phi}(\frac 1L\vec\gamma)}\\
&=(\ratio_{\psi/\phi})_*\left(\frac 1{L^{6g-6+2r}}\sum_{\vec\gamma\in{\bf M}_{\phi,\vec\gamma^o}(L)}\delta_{\frac1L\vec\gamma}\right)\\
&=(\ratio_{\psi/\phi})_*\left(m(\vec\gamma,L)\vert_{\{\vec\mu\in\CC(\Sigma)^k\text{ with }\Vert\phi(\mu)\Vert\le 1\}}\right)
\end{align*}
where $m(\vec\gamma,L)$ is the measure considered in Theorem \ref{main}. The convergence of the measures $m(\vec\gamma,L)$, the continuity of $\ratio_{\psi/\phi}$ on its domain $(\CC(\Sigma)\setminus\{0\})^k$, and the fact that $\FQ_{\vec\gamma^o}(\partial\Delta_k)=0$ (Lemma \ref{lemma new}) imply thus that
\begin{align*}
\lim_{L\to\infty}\frac 1{L^{6g-6+2r}}&\sum_{\vec\gamma\in{\bf M}_{\phi,\vec\gamma^o}(L)}\delta_{\ratio_{\psi/\phi}(\vec\gamma)}=\\
&=(\ratio_{\psi/\phi})_*\left((\BD_*(\FQ_{\vec\gamma^o}\otimes\FM_{\Thu}))\vert_{\{\mu\in\CC(\Sigma)^k\vert\phi(\mu)\le 1\}}\right)\\
&=(\ratio_{\psi/\phi}\circ\BD)_*\left((\FQ_{\vec\gamma^o}\otimes\FM_{\Thu})\vert_{\Delta_k\times\{\lambda\in\CM\CL(\Sigma)\vert \phi(\lambda)\le 1\}}\right).
\end{align*}
Using \eqref{equ2}, we can rewrite this as 
\begin{multline*}
\lim_{L\to\infty}\frac 1{L^{6g-6+2r}}\sum_{\vec\gamma\in{\bf M}_{\phi,\vec\gamma^o}(L)}\delta_{\ratio_{\psi/\phi}(\vec\gamma)}=\\
=\Vert\FQ_{\vec\gamma^o}\Vert\cdot\left(\diag\circ\left(\frac{\psi(\cdot)}{\phi(\cdot)}\right)\right)_*\left(\FM_{\Thu}\vert_{\{\lambda\in\CM\CL(\Sigma)\vert\phi(\lambda)\le 1\}}\right).
\end{multline*}
Scaling by the missing factors we get thus that
\begin{align*}
\lim_{L\to\infty} r(\phi/\psi,\vec\gamma^o, L)
&=\lim_{L\to\infty}\frac 1{\Vert\FQ_{\vec\gamma^o}\Vert\cdot B(\phi)}\frac 1{L^{6g-6+2r}}\sum_{\vec\gamma\in{\bf M}_{\phi,\vec\gamma^o}(L)}\delta_{\ratio_{\psi/\phi}(\vec\gamma)}\\
&=\diag_*\left(\left(\frac{\psi(\cdot)}{\phi(\cdot)}\right)_*\left(\frac 1{B(\phi)}\FM_{\Thu}\vert_{\{\lambda\in\CM\CL(\Sigma)\vert\phi(\lambda)\le 1\}}\right)\right)
\end{align*}
and the claim follows when we set
\begin{equation}\label{eq concrete r measure}
\FR_{\psi/\phi}=\left(\frac{\psi(\cdot)}{\phi(\cdot)}\right)_*\left(\frac 1{B(\phi)}\FM_{\Thu}\vert_{\{\lambda\in\CM\CL(\Sigma)\vert\phi(\lambda)\le 1\}}\right).
\end{equation}
We have proved Theorem \ref{main ratios}.
\end{proof}

\section{Examples}\label{sec examples}
In this final section we discuss the measures $\FP_{\vec\gamma^o}$ and $\FR_{\psi/\phi}$ in some concrete examples. In a nutshell we get that the measure $\FP_{\vec\gamma^o}$ is much better behaved than the measure $\FR_{\psi/\phi}$. In our examples this is made apparent by the fact that while we calculate $\FP_{\vec\gamma^o}$ in some concrete cases, all results about $\FR_{\psi/\phi}$ describe different pathologies.

\subsection*{Some pathological examples of the measure $\FR_{\psi/\phi}$}
From \eqref{eq concrete r measure} we get an explicit description of the measure $\FR_{\psi/\phi}$ for two given continuous, homogenous and positive functions $\phi,\psi:\CC(\Sigma)\to\BR_{\ge 0}$. We want however a different formula. First note that $\phi$ determines a probability measure on the space $\CP\CM\CL(\Sigma)$ of projective measured laminations via the formula
$$\FN_\phi(U)=\frac 1{B(\phi)}\FM_{\Thu}(\{\lambda\in\CM\CL(\Sigma)\text{ with }\phi(\lambda)\le 1\text{ and }[\lambda]\in U\})$$
where $[\lambda]$ stands for the projective class of $\lambda$. Noting that the function $\frac{\psi(\cdot)}{\phi(\cdot)}$ descends to a well-defined map
$$\frac{\psi(\cdot)}{\phi(\cdot)}:\CP\CM\CL(\Sigma)\to\BR_{\ge 0}$$
it is easy to see that we can rewrite $\FR_{\psi/\phi}$ as
\begin{equation}\label{eq push r}
\FR_{\psi/\phi}=\left(\frac{\psi(\cdot)}{\phi(\cdot)}\right)_*\FN_{\phi}.
\end{equation}
Since $\CP\CM\CL(\Sigma)$ is compact and connected, we get that the support of $\FR_{\psi/\phi}$ is a possibly degenerate interval $[a,b]\subset\BR_{>0}$. Our first observation, a not very surprising one, is that $\FR_{\psi/\phi}$ can well be absolutely continuous with respect to Lebesgue measure:

\begin{bei}\label{ex lebesgue class}
{\em There are two filling weighted multicurves $\alpha,\beta$ such that $\FR_{\iota(\beta,\cdot)/\iota(\alpha,\cdot)}$ is absolutely continuous with respect to the Lebesgue measure.} To construct our two multicurves we start by choosing a filling multicurve $\sigma$ and a collection $\gamma_1,\dots,\gamma_n$ of curves with the property that whenever we have $\lambda,\lambda'\in\CM\CL(\Sigma)$ with
$$\iota(\gamma_i,\lambda)=\iota(\gamma_i,\lambda')\text{ for all }i=1,\dots,n$$
then $\lambda=\lambda'$. This means in particular that the map
$$\BI = \BI(\cdot, (\gamma_i)):\{\lambda\in\CM\CL(\Sigma),\ \iota(\sigma,\lambda)=1\}\to\BR^n,\ \ \BI(\lambda)=(\iota(\gamma_i,\lambda))_i$$
is an embedding. In fact, if we endow $\CM\CL(\Sigma)$ with its standard PL-structure, then $\BS$ is a PL-homeomorphism onto a PL-sphere in Euclidean space. Moreover, if we identify $\CP\CM\CL(\Sigma)\simeq\{\lambda\in\CM\CL(\Sigma),\ \iota(\sigma,\lambda)=1\}$ by sending the class $[\lambda]$ to its unique representative $\lambda$ with $\iota(\sigma,\lambda)=1$, then $\BI_*(\FN_{\iota(\sigma,\cdot)})$ is a measure in the Lebesgue class associated to the PL-structure. It follows that for generic choices of $(a_1,\dots,a_n)\in\BR^n_{\geq0}$ the push forward of $\BI_*(\FN_{\iota(\sigma,\cdot)})$ under the linear map $\rho_{(a_1,\dots,a_n)}:\BR^n\to\BR$ given by $\rho_{(a_1,\dots,a_n)}(x_1,\dots,x_n)\mapsto\sum a_ix_i$ is a measure which is absolutely continuous with respect to the Lebesgue measure. Now, the point of all this is that 
$$\frac{\iota(\sigma+\sum a_i\gamma_i,\cdot)}{\iota(\sigma,\cdot)}:\CP\CM\CL(\Sigma)\simeq\{\lambda\in\CM\CL(\Sigma),\ \iota(\sigma,\lambda)=1\}\to \BR$$
satisfies 
$$\frac{\iota(\sigma+\sum a_i\gamma_i,\cdot)}{\iota(\sigma,\cdot)}=1 + (\rho_{(a_1,\dots,a_n)}\circ\BI).$$
We can thus take $\alpha=\sigma$ and $\beta=\sigma+\sum a_i\gamma_i$ for generic choices of $a_i>0$.
\end{bei}

In some sense Example \ref{ex lebesgue class} is just an example of what one would expect generically to happen. It is however clear that $\FR_{\psi/\phi}$ is not always absolutely continuous with respect to the Lebesgue measure. A silly example of this would be to take $\phi=\psi$ because in this case our measure is a Dirac measure $\delta_1$ centered at $1\in\BR_{\ge 0}$. There are however much more interesting examples:

\begin{bei}
{\em There are two distinct filling multicurves $\alpha,\beta$ such that $\FR_{\iota(\alpha,\cdot)/\iota(\beta,\cdot)}=\delta_1$ is a Dirac measure.} Indeed, Horowitz \cite{Horowitz} proved that there are plenty of pairs of curves $\alpha,\beta$ which have the same length in any hyperbolic metric and it is easy to get them filling: just get two in a pair of pants and then take an immersion of the pair of pants in the surface $\Sigma$ in such a way that each curve in the pair of pants is sent to a filling curve. Anyways, in \cite{Leininger03} Leininger proved that any one of Horowitz's pairs of curves $\alpha,\beta$ also satisfies that $\iota(\alpha,\lambda)=\iota(\beta,\lambda)$ for all $\lambda\in\CM\CL(\Sigma)$. It follows that the map $\ratio_{\iota(\alpha,\cdot)/\iota(\beta,\cdot)}$ is constant $1$. The claim follows thus from \eqref{eq push r}.
\end{bei}

To construct more pathological examples recall that, when we choose a hyperbolic metric on $\Sigma$, the space of currents can be identified with the space $\CM_{flip-flow}(\Sigma)$ of measures on the unit tangent bundle $T^1\Sigma$ which are invariant under the geodesic flip and the geodesic flow (see for example \cite[Theorem 3.4]{ES book}). It follows that if $f:T^1\Sigma\to\BR_+$ is any continuous function then 
$$\phi_f:\CC(\Sigma)\simeq\CM_{flip-flow}(\Sigma)\to\BR_+,\ \phi_f(\mu)=\int_{T^1\Sigma}f\ d\mu$$
is a continuous, homogenous and positive function on the space of currents. Now, the key observation is that, because of the Birman-Series Theorem \cite{Birman-Series}, the set $\CX$ of those vectors in $T^1\Sigma$ which are tangent to some simple geodesic is a closed set of Hausdorff dimension $1$. It follows that there are plenty of continuous functions $f:T^1\Sigma\to\BR_+$ with support disjoint of $\CX$ and for any such we have $\phi_f(\lambda)+\phi(\lambda)=\phi(\lambda)$ for all $\lambda\in\CM\CL(\Sigma)$ and any $\phi:\CC(\Sigma)\to\BR_{\ge 0}$. We thus have the following:

\begin{bei}
{\em There are plenty of pairs of distinct continuous, homogenous, positive functions $\phi,\psi:\CC(\Sigma)\to\BR_+$ with $\psi(\lambda)=\phi(\lambda)$ for all $\lambda\in\CM\CL(\Sigma)$. For any such pair we have $\FR_{\phi,\psi}=\delta_1$.}
\end{bei}

Now one can use the same idea to construct all sorts of examples. For instance, let $\lambda_1,\lambda_2\in\CM\CL(\Sigma)$ be two maximal measured laminations whose supports are mutually transversal geodesic laminations and let $U_1,U_2\subset T^1\Sigma$ be open neighborhoods of the supports of $\lambda_1$ and $\lambda_2$ respectively. Then let $K_i\subset U_i$ be a compact neighborhood of the support of $\lambda_i$ and let $f:T^1\Sigma\to\BR_+$ be a continuous function with $f\vert_{K_1}$ constant $2$ and $f\vert_{K_2}$ constant $3$. The map $\ratio_{\phi_f/\phi_{\bf 1}}$ takes then the value $2$ in a neighborhood of $\lambda_1$ and $3$ in a neighborhood of $\lambda_2$. It follows that $\FR_{\phi_f/\phi_{\bf 1}}$ has at least two atoms, one at $2$ and one at $3$.

Evidently one can use this construction to get any number of atoms. In fact, even to get infinitely many atoms. And clearly one has an enormous flexibility when constructing $f$.

\begin{bei}\label{example r atoms}
{\em For any $k=1,2,\dots,\infty$ there are plenty of pairs of distinct continuous, homogenous and positive functions $\phi,\psi:\CC(\Sigma)\to\BR_{\ge 0}$ such that $\FR_{\psi/\phi}$ has $k$ atoms.}
\end{bei}

It is evident that in general the measure $\FR_{\psi/\phi}$ can be pretty badly behaved and that it seems that one can get any pathology one can think of. It might be however interesting to see how $\FR_{\psi/\phi}$ behaves if we restrict $\phi,\psi$ to belonging to some natural class, such as length functions of hyperbolic metrics.

\subsection*{Some calculations of $\FP_{\vec\gamma^o}$}
Let us recall now that the measure $\FP_{\vec\gamma^o}$ is the probability measure on $\Delta_k$ proportional to the measure \eqref{eq q measure} given, up to a constant, by
\begin{equation}\label{eq loud kids}
U\mapsto \BI(\cdot,\vec\gamma^o)_*(\FM_{\Thu}\vert_{\calD_{\vec\gamma^o}})(\cone(U))
\end{equation}
where $\calD_{\vec\gamma^o} = \calD_{\Stab(\vec\gamma^o)}$ is a fundamental domain for the action of $\Stab(\vec\gamma^o)$ on $\CM\CL(\Sigma)$, and where as always $\cone(U)$ is the cone with basis $U$. It is thus clear that to calculate $\FP_{\vec\gamma^o}$ one needs to understand 
\begin{enumerate}
\item how to build a fundamental domain $\calD_{\vec\gamma^o}$ for the action $\Stab(\vec\gamma^o)\actson\CM\CL(\Sigma)$, and
\item the restriction of the map $\BI(\cdot,\vec\gamma^o)$ to $\calD_{\vec\gamma^o}$.
\end{enumerate}
Well, there are good news and bad news. The good news is that this is in principle feasible, and the bad news is that as soon as $\vec\gamma$ gets complicated then anybody agreeing to do it must be either slightly unaware of the amount of work involved, or under the influence. Let us still consider some specific cases:

\begin{bei}
Let $\Sigma$ be a one-holed torus, let $\alpha^o,\beta^o$ be simple curves which intersect once, and set $\vec\gamma^o=(\alpha^o,\beta^o)$. The map
$$\BI(\cdot,\vec\gamma^o):\CM\CL(\Sigma)\to\BR_{\ge 0}^2$$
is 2-to-1 on $\BR_{>0}^2$ and 1-to-1 elsewhere. From here, and the definition of the Thurston measure, we get that
$$\BI(\cdot,\vec\gamma^o)_*\FM_{\Thu}=2\cdot\Leb_{\BR^2}$$
where $\Leb_{\BR^2}$ is the standard Lebesgue measure. We thus get from \eqref{eq loud kids} that
$$\FP_{\vec\gamma^o}(U)=\frac 1{\sqrt 2}\Leb_{\Delta_2}(U)$$
for every $U\subset\Delta_2$ measurable. 
\end{bei}

In the previous example and in the sequel, we denote by $\Leb_{\Delta_k}$ the standard Lebesgue measure on $\Delta_k$, that is the one obtained by integrating the Riemannian volume form. 

\begin{bei}\label{example pants}
Suppose that $\vec P^o$ is a labeled pants decomposition. The stabilizer of $\vec P^o$ is nothing other than the group $\BT\subset\Map(\Sigma)$ consisting of multi-twists along the components of $\vec P^o$ and it is easy to find a fundamental domain $\calD_{\vec P^o}$ for the action of $\BT$ on $\CM\CL(\Sigma)$: it is intuitively the set of measured laminations whose leaves twist at most once around the components of $\vec P^o$, and this can be made formal using Dehn-Thurston coordinates. In those coordinates, the map $\BI(\cdot,\vec P^o)$ is just the projection onto half of the coordinates. We get then from there that
$$\BI(\cdot,\vec P^o)_*(\FM_{\Thu}\vert_{\calD_{\vec P^o}})(V)=\frac 1{2^{2g-3+r}}\int_V\prod x_i\ dx_1\ldots dx_{3g-3+r}$$
for any $V\subset\BR_{\ge 0}^{3g-3+r}$ (see for example \cite[Exercise 12.1]{ES book}, although this computation is in one way or another already in Mirzakhani's thesis \cite{Maryam thesis}, and we would not be surprised if there were earlier sources) meaning that 
\begin{align*}
\FQ_{\vec P^o}(U) & = \frac{1}{\FB_{g,r}}\cdot \frac 1{2^{2g-3+r}}\int_{\cone(U)}\prod x_i\ dx_1\ldots dx_{3g-3+r}\\
&= \frac{1}{\FB_{g,r}}\cdot \frac 1{2^{2g-3+r}}\cdot \frac{1}{2(3g-3+r)^{3/2}}\int_{U}\prod x_i\ d\Leb_{\Delta_{3g-3+r}}
\end{align*}
for any $U\subset \Delta_{3g-3+r}$. 

Now, taking into account that $\FP_{\vec P^o} = \frac {\FQ_{\vec P^o}}{\Vert\FQ_{\vec P^o}\Vert}$ and that 

\begin{equation*}
\int_{\Delta_{3g-3+r}} \prod x_i \,d\Leb_{\Delta_{3g-3+r}} = 
 \frac{\sqrt{3g-3+r}}{(6g-7+2r)!}
\end{equation*}
we get that 
$$\FP_{\vec P^o}(U) = \frac{(6g-7+2r)!}{\sqrt{3g-3+r}}\cdot\int_U \prod x_i\ d\Leb_{\Delta_{3g-3+r}}$$
for all $U\subset \Delta_{3g-3+r}$.

This concludes the discussion for the pair of pants.
\end{bei}  

The real reason why one can calculate $\FQ_{\vec P^o}$ for a labeled pants decomposition is that it is not hard to give a specific fundamental domain for the action of $\Stab(\vec P^o)$ on $\CM\CL(\Sigma)$. Explicit fundamental domains can also be given for general simple multicurves $\vec\gamma^o$, and we could thus use them to calculate $\FQ_{\vec\gamma^o}$ in such cases. But one can also do something else. Indeed, relying on the earlier mentioned results of Arana-Herrera \cite{Arana} and Liu \cite{Liu} we get from Corollary \ref{kor counting} the following:

\begin{bei}\label{example simple multicurves}
If $\vec\gamma^o$ is a simple $k$-multicurve then we have for every $U\subset\Delta_k$
$$\FP_{\vec\gamma^o}(U)=\int_U P_{\vec\gamma^o}d\Leb_{\Delta_k}$$
for some specific homogenous polynomial $P_{\vec\gamma^o}$ of degree $6g-6+2r-k$.
\end{bei}

\begin{bem}
From Arana-Herrera \cite{Arana} and Liu \cite{Liu} one gets an expression for the coefficients of the polynomial $P_{\vec\gamma^o}$ in terms of intersection numbers of Chern classes of line bundles on the Deligne-Mumford compactification of moduli space. If on the other hand one uses the fundamental domain implicit in the proof of \cite[Theorem 11.2]{ES book} then the coefficients of $P_{\vec\gamma^o}$ are expressed in terms of Thurston volumes of the set of measured laminations carried by some specific train tracks. Using work of Norbury \cite{Norbury} and Kontsevich \cite{Kontsevich}, one can reconcile the two points of view along the lines of what is done in \cite{LRST}.
\end{bem}

In all these examples the measure is in the Lebesgue class and has full support. This is evidently not true in general, basically because there is no reason for the map $\BI(\cdot,\vec\gamma^o):\CM\CL(\Sigma)\to\BR_{\ge 0}^k$ to be surjective. Indeed, this map is PL with respect to the standard PL-structure of $\CM\CL(\Sigma)$ and this implies that it cannot possibly be surjective if $k>6g-6+2r=\dim(\CM\CL(\Sigma))$. Anyways, since the action $\Stab(\vec\gamma^o)\actson\CM\CL(\Sigma)$ also admits a PL-fundamental domain, we get the following:

\begin{prop}\label{prop describe p measure}
For every $k$-multicurve $\vec\gamma^o=(\gamma_1,\dots,\gamma_k)$ there are finitely many linear maps $L_1,\dots,L_n:\BR^{6g-6+2r}\to\BR^k$ mapping $\Delta_{6g-6+2r}$ into $\Delta_k$ such that $\FP_{\vec\gamma}$ is a linear combination with positive coefficients of the measures $(L_1)_*\Leb_{\Delta_{6g-6+2r}},\dots,(L_n)_*\Leb_{\Delta_{6g-6+2r}}$.
\end{prop}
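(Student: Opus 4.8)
The plan is to reduce the statement to a push-forward computation and then exploit the fact that every object in sight is a cone. Write $N=6g-6+2r=\dim\CM\CL(\Sigma)$ and $\eta^o=\pi(\vec\gamma^o)=\sum_i\gamma_i$. Since $\FP_{\vec\gamma^o}$ is proportional to $\FQ_{\vec\gamma^o}$ and $\FQ_{\vec\gamma^o}(U)=\frac1{\FB_{g,r}}\,\BI(\cdot,\vec\gamma^o)_*(\FM_{\Thu}\vert_{\calD_{\vec\gamma^o}})(\cone(U))$, everything is governed by the measure $\nu=\BI(\cdot,\vec\gamma^o)_*(\FM_{\Thu}\vert_{\calD_{\vec\gamma^o}})$ on $\BR^k_{\ge 0}$. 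Because $\BI(\cdot,\vec\gamma^o)$ is invariant under $\Stab(\vec\gamma^o)$ (which fixes each $\gamma_i$) and $\FM_{\Thu}$ is $\Map(\Sigma)$-invariant, $\nu$ does not depend on the choice of fundamental domain, so we take $\calD_{\vec\gamma^o}$ to be a PL fundamental domain for $\Stab(\vec\gamma^o)\actson\CM\CL(\Sigma)$, as noted right before the statement; moreover, since $\Stab(\vec\gamma^o)$ commutes with scaling, $\calD_{\vec\gamma^o}$ may be taken to be a cone. I would also recall the standard PL picture of $\CM\CL(\Sigma)$ (from Dehn--Thurston or train track coordinates): there is a PL atlas with $\GL_N(\BZ)$ transition maps in which $\FM_{\Thu}$ is a fixed multiple of Lebesgue measure, the scaling action is linear, and every intersection function $\iota(\cdot,\gamma)$ — hence $\BI(\cdot,\vec\gamma^o)$ — is PL.

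Next I would subdivide: taking a common refinement of the finite fan on which $\BI(\cdot,\vec\gamma^o)$ is linear with the cone $\overline{\calD_{\vec\gamma^o}}$, refining it to a simplicial fan, and using $\FM_{\Thu}(\overline{\calD_{\vec\gamma^o}}\setminus\calD_{\vec\gamma^o})=0$ from Proposition \ref{prop fundamental domain}, I obtain finitely many $N$-dimensional simplicial cones $C_1,\dots,C_n$ with disjoint interiors and union $\overline{\calD_{\vec\gamma^o}}$, on each of which $\BI(\cdot,\vec\gamma^o)$ is the restriction of a linear map $A_j\colon\BR^N\to\BR^k$. Let $v^{(j)}_1,\dots,v^{(j)}_N$ be the edge generators of $C_j$. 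The one step that is not pure bookkeeping is the claim that $\iota(v^{(j)}_i,\eta^o)>0$ for all $i,j$. If $\iota(v^{(j)}_i,\eta^o)=0$, then $\BI(\cdot,\vec\gamma^o)$ vanishes on the edge ray $\BR_{\ge 0}v^{(j)}_i$ and, because $C_j$ is simplicial, $C_j$ is stable under translation by that ray while $\BI(\cdot,\vec\gamma^o)$ is constant along it; hence $\{\lambda\in C_j:\iota(\lambda,\eta^o)\le 1\}$ is stable under these translations and of positive $\FM_{\Thu}$-measure (one checks $A_j\ne 0$, since otherwise $C_j\subseteq\{\iota(\cdot,\eta^o)=0\}$ already forces $\FM_{\Thu}(\{\lambda\in\calD_{\vec\gamma^o}:\iota(\lambda,\eta^o)\le 1\})=\infty$), hence of infinite $\FM_{\Thu}$-measure. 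This contradicts $\FM_{\Thu}(\{\lambda\in\calD_{\vec\gamma^o}:\iota(\lambda,\eta^o)\le 1\})=\FB_{g,r}\cdot\FQ_{\vec\gamma^o}(\Delta_k)<\infty$, which holds because $\FP_{\vec\gamma^o}$ is a probability measure by Theorem \ref{main counting}. This positivity statement is the real content; the rest is computation.

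Granting $\iota(v^{(j)}_i,\eta^o)>0$, I would rescale the generators, setting $\hat v^{(j)}_i=v^{(j)}_i/\iota(v^{(j)}_i,\eta^o)$ so that $\BI(\hat v^{(j)}_i,\vec\gamma^o)\in\Delta_k$, and define $L_j\colon\BR^N\to\BR^k$ to be the linear map with $L_je_i=\BI(\hat v^{(j)}_i,\vec\gamma^o)$. Then $L_j$ sends the standard basis into the convex set $\Delta_k$, so $L_j(\Delta_N)\subseteq\Delta_k$, and each column of $L_j$ has $\ell^1$-norm $1$, so $\Vert L_j t\Vert=\Vert t\Vert$ for all $t\in\BR^N_{\ge 0}$. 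Parametrising $C_j$ by $\lambda=\sum_i t_i\hat v^{(j)}_i$ with $t\in\BR^N_{\ge 0}$ turns $\FM_{\Thu}\vert_{C_j}$ into a positive constant times Lebesgue measure on $\BR^N_{\ge 0}$ and $\BI(\cdot,\vec\gamma^o)$ into $t\mapsto L_j t$; writing $t=r\omega$ with $r=\Vert t\Vert$ and $\omega\in\Delta_N$, the norm identity shows $L_j t\in\cone(U)$ exactly when $r\le 1$ and $\omega\in L_j^{-1}(U)$, so integrating out $r$ identifies the contribution of $C_j$ to $\nu(\cone(\,\cdot\,))$ with a positive constant times $(L_j)_*\Leb_{\Delta_N}$. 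Summing over $j$ gives $\FQ_{\vec\gamma^o}=\sum_j a_j\,(L_j)_*\Leb_{\Delta_N}$ with $a_j>0$, and dividing by $\Vert\FQ_{\vec\gamma^o}\Vert$ yields $\FP_{\vec\gamma^o}=\sum_j c_j\,(L_j)_*\Leb_{\Delta_N}$ with $c_j>0$, as required. The main obstacle, as flagged, is ruling out these "cylindrical" simplicial cones, which is precisely where the finiteness of $\FP_{\vec\gamma^o}$ gets used.
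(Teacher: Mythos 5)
Your rescaling and polar-coordinate bookkeeping at the end is correct, and your observation that the edge generators $v^{(j)}_i$ must satisfy $\iota(v^{(j)}_i,\eta^o)>0$ (proved via the finiteness of $\FM_{\Thu}(\{\lambda\in\calD_{\vec\gamma^o}\text{ with }\iota(\lambda,\eta^o)\le 1\})$) is a valid way to handle the normalization; the paper leaves exactly this bookkeeping to the reader. But there is a genuine gap earlier, and it sits at what is really the crux of the statement: you take as part of ``the standard PL picture'' that every intersection function $\iota(\cdot,\gamma)$, hence $\BI(\cdot,\vec\gamma^o)$, is PL on $\CM\CL(\Sigma)$. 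This is classical only when $\gamma$ is a simple multicurve, where it is visible in Dehn--Thurston or train-track coordinates; but the proposition is interesting precisely for non-simple components (for simple ones the measure is already computed along the lines of Examples \ref{example pants} and \ref{example simple multicurves}), and for a non-simple $\gamma_i$ the piecewise linearity of $\lambda\mapsto\iota(\lambda,\gamma_i)$ is not a quotable standard fact --- it is what the paper's proof actually establishes. The argument there is geometric: fix a hyperbolic metric, note that geodesic laminations meet the geodesic representatives of the $\gamma_i$ at angle at least $4\epsilon$ for some $\epsilon>0$, and split a finite collection of standard maximal train tracks until each resulting track has geodesic curvature less than $\epsilon$ and crosses the $\gamma_i$ at angle greater than $4\epsilon$; this precludes bigons, so for a lamination carried by such a track the intersections with $\gamma_i$ are counted branch by branch with weights, and $\BI(\cdot,\vec\gamma^o)$ is linear on each cone $W(\tau_j)$ of weight systems. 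Without this construction (or an explicit reference replacing it), your decomposition of $\overline{\calD_{\vec\gamma^o}}$ into finitely many simplicial cones on which $\BI(\cdot,\vec\gamma^o)$ is linear is not justified, and the rest of your argument has nothing to stand on. The same caveat touches your use of a PL, conical fundamental domain: the domain \eqref{eq fundamental domain} is cut out by inequalities between intersection functions with multicurves, so its PL structure rests on the same unproved input, although here you can at least point to the paper's assertion in the paragraph preceding the proposition.

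In short, your proposal inverts the weight of the proof: the step you flag as ``the real content'' (positivity of $\iota(v^{(j)}_i,\eta^o)$, ruling out cylindrical cones) is a correct but secondary normalization issue, while the genuinely nontrivial point --- linearity of $\BI(\cdot,\vec\gamma^o)$ on suitably chosen train-track charts when the $\gamma_i$ are not simple --- is assumed rather than proved. Note also that the paper's sketch deliberately restricts to the case where each $\gamma_i$ is a non-simple curve and $\pi(\vec\gamma^o)$ has trivial stabilizer, which lets it dispense with the fundamental-domain discussion altogether; your version of the reduction in the general case is fine in outline, but it only becomes a proof once the train-track linearity statement is supplied.
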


\begin{proof}[Sketch of proof]
We will only prove Proposition \ref{prop describe p measure} in the case that each component $\gamma_1,\dots,\gamma_k$ is a non-simple curve and that $\sum\gamma_i$ has trivial stabilizer. This fully suffices to get the gist of the proof and the reader used to working with train-tracks will have no difficulty filling in the details of the remaining cases. Well, fixing a hyperbolic metric on $\Sigma$ we get from our additional assumptions on $\vec\gamma^o$ that there is some $\epsilon$ so that all the intersections between a leaf of a measured lamination and one of the components $\gamma_i$ happen with at angle greater than $4\epsilon$. Take now finitely many maximal train-tracks which carry all of $\CM\CL(\Sigma)$ and such that the set of measured laminations carried by two of them has negligible measure (take for example the standard train tracks from \cite{Penner-Harer}). Now, splitting these train tracks finitely many times we can replace this initial collection by a new collection $\tau_1,\dots,\tau_n$ of train tracks which besides having these same properties, also have geodesic curvature $<\epsilon$ and meet the components of $\vec\gamma^o$ with angle greater than $4\epsilon$. This implies that if we identify the set $\CM\CL(\tau_i)$ of measured laminations carried by $\tau_i$ with the set $W(\tau_i)$ of positive solutions of the switch equations, then the restriction of $\BI(\cdot,\vec\gamma^o)$ to $\CM\CL(\tau_i)$ is linear. The claim follows because $\CM\CL(\tau_i)=W(\tau_i)$ is simply the intersection of a linear space with the positive quadrant and because the Thurston measure on $\CM\CL(\tau_i)$ is proportional to the standard Lebesgue measure on the polyhedral cone $W(\tau_i)$.
\end{proof}

Now, the reader probably suspects that if we consider sequences $(\vec\gamma_n)_n$ of $k$-multicurves which become more and more complicated, then the measures $\FP_{\vec\gamma_n}$ can become much wilder. This is true:

\begin{bei}\label{example p complicated}
Suppose that $\Sigma$ is a one-holed torus, let $\alpha^o,\beta^o$ be two simple curves which intersect once, fix a complete hyperbolic metric $X$ on the interior of $\Sigma$, let $(\eta_n^o)_n$ be a sequence of weighted geodesics whose associated geodesic flow invariant measures converge to the Liouville measure of $X$, and set
$$\vec\gamma_n^o=(\alpha^o,\beta^o,\eta_n^o).$$ 
The key observation is that the maps $\BI(\cdot,\vec\gamma_n^o)$ converge uniformly on compacta to the map
$$\BI_\infty:\CM\CL(\Sigma)\to\BR^3,\ \ \lambda\mapsto(\iota(\alpha^o,\lambda),\iota(\beta^o,\lambda),\ell_X(\lambda))$$
and hence that measures $\BI(\cdot,\vec\gamma_n^o)_*\FM_{\Thu}$ converge to $(\BI_\infty)_*\FM_{\Thu}$. From the work of McShane--Rivin \cite{McShane-Rivin} we get that the support of $(\BI_\infty)_*\FM_{\Thu}$ is the cone over the union of two continuous maps $[0,1]\to\Delta_3$, each one of which fails to be smooth in a dense set of points. We deduce that there is no $N$ such that all measures $\FP_{\vec\gamma_n^o}$ are supported by $N$ linear images of the simplex $\Delta_2$. 
\end{bei}


\begin{thebibliography}{99}

\bibitem{Javi-Chris}
J. Aramayona and C. Leininger, {\em Hyperbolic structures on surfaces and geodesic currents}, Adv. Courses Math. CRM Barcelona, Birkh\"auser/Springer, 2017.

\bibitem{Arana}
F. Arana-Herrera, {\em Counting hyperbolic multigeodesics with respect to the lengths of individual components and asymptotics of Weil-Petersson volumes}, Geom. Topol. 26 (2022).

\bibitem{Birman-Series}
J. Birman and C. Series, {\em Geodesics with bounded intersection number on surfaces are sparsely distributed}, Topology 24, (1985).

\bibitem{Bonahon86}
F. Bonahon, {\em Bouts des vari\'et\'es hyperboliques de dimension  3}, 
Ann. of Math. 124 (1986).

\bibitem{Bonahon88}
F. Bonahon, {\em The geometry of Teichm\"uller space via geodesic currents}, Invent. Math. 92 (1988).

\bibitem{Bonahon90}
F. Bonahon, {\em Geodesic currents on negatively curved groups}, Math. Sci. Res. Inst. Publ., 19, (1991).

\bibitem{Viv word length}
V. Erlandsson, {\em A remark on the word length in surface groups}, Trans. Amer. Math. Soc. 372 (2019).

\bibitem{Mondello}
V. Erlandsson and G. Mondello, {\em Ergodic invariant measures on the space of geodesic currents}, 
Ann. Inst. Fourier 72 (2022).

\bibitem{Hugo}
V. Erlandsson, H. Parlier and J. Souto, {\em Counting curves, and the stable length of currents}, J. Eur. Math. Soc. 22 (2020).

\bibitem{ES book}
V. Erlandsson and J. Souto, {\em Mirzakhani's curve counting and geodesic currents}, Progr. Math., 345
Birkh\"auser/Springer, 2022.


\bibitem{Horowitz}
R. Horowitz, {\em Characters of free groups represented in the two-dimensional special linear group}, Comm. Pure Appl. Math. 25, (1972).

\bibitem{Kontsevich}
M. Kontsevich, {\em Intersection theory on the moduli space of curves and the matrix Airy function}, Comm. Math. Phys. 147 (1992).

\bibitem{Leininger03}
C. Leininger, {\em Equivalent curves in surfaces}, Geom. Dedicata 102, (2003).

\bibitem{Liu}
M. Liu, {\em Length statistics of random multicurves on closed hyperbolic surfaces}, Groups Geom. Dyn.16 (2022).

\bibitem{LRST}
M. Liu, K. Rafi, M. Trin and J. Souto, {\em in preparation}

\bibitem{Didac-Dylan}
D. Mart\'inez-Granado and D. Thurston, {\em From curves to currents}, Forum Math. Sigma 9 (2021).

\bibitem{Masur almost all uq}
H. Masur, {\em Interval exchange transformations and measured foliations}, Ann. of Math. 115, (1982).

\bibitem{Masur ergodic}
H. Masur, {\em Ergodic actions of the mapping class group}, Proc. Amer. Math. Soc. 94, (1985).

\bibitem{McShane-Rivin}
G. McShane and I. Rivin, {\em A norm on homology of surfaces and counting simple geodesics}, 
Internat. Math. Res. Notices (1995).

\bibitem{Maryam thesis}
M. Mirzakhani, {\em Simple geodesics on hyperbolic surfaces and the volume of the moduli space of curves}, 
Thesis (Ph.D.), Harvard University, 2004.

\bibitem{Maryam simple}
M. Mirzakhani, {\em Growth of the number of simple closed geodesics on hyperbolic surfaces}, Ann. of Math. (2)168 (2008).

\bibitem{Maryam general}
M. Mirzakkhani, {\em Counting Mapping Class group orbits on hyperbolic surfaces}, {\tt arXiv:1601.03342}

\bibitem{Norbury}
P. Norbury, {\em Counting lattice points in the moduli space of curves}, Math. Res. Lett. 17 (2010).

\bibitem{Penner-Harer}
R. Penner and J. Harer, {\em Combinatorics of train tracks}, Ann. of Math. Stud., 125, Princeton University Press, 1992.


\end{thebibliography}
\end{document}